\newcommand{\XX}{\bf{X}}
\newcommand{\xx}{\bf{x}}
\newcommand{\Pe}{\operatorname{Pe}}
\newcommand{\dd}{\text{d}}
\newcommand{\R}{\mathbb{R}}
\newcommand{\T}{\mathbb{T}}
\newcommand{\ii}{\text{i}}
\newcommand{\ini}{\text{in}}
\newcommand{\Id}{\text{Id}}
\renewcommand{\Re}{\text{Re}}
\newcommand{\e}{\text{e}}
\renewcommand{\bf}[1]{\mathbf{#1}}
\crefname{hypothesis}{Hypothesis}{Hypotheses}
\title{Lane formation and aggregation spots in a model for ants}
\author{Maria Bruna, Martin Burger, \and Oscar de Wit}
\begin{document}
\maketitle

\begin{abstract}
We investigate an interacting particle model to simulate a foraging colony of ants, where each ant is represented as an active Brownian particle. The interactions among ants are mediated through chemotaxis, aligning their orientations with the upward gradient of the pheromone field. Unlike conventional models, our study introduces a parameter that enables the reproduction of two distinctive behaviors: the well-known Keller--Segel aggregation into spots and the formation of traveling clusters, without relying on external constraints such as food sources or nests. We consider the associated mean-field limit partial differential equation (PDE) of this system and establish the analytical and numerical foundations for understanding these particle behaviors. Remarkably, the mean-field PDE not only supports aggregation spots and lane formation but also unveils a bistable region where these two behaviors compete. The patterns associated with these phenomena are elucidated by the shape of the growing eigenfunctions derived from linear stability analysis. This study not only contributes to our understanding of complex ant colony dynamics but also introduces a novel parameter-dependent perspective on pattern formation in collective systems.
\end{abstract}

\begin{keywords}
active particles, chemotaxis, lane formation, stability analysis
\end{keywords}

\begin{MSCcodes}
35Q84, 35R09, 35B35, 35B36, 35B40, 60J70, 92D50
\end{MSCcodes}

\section{Introduction}
\label{sec:intro}
Collective behaviors of animals can persist over large distances and for long times. One example can be found in the self-organization of ants. Amongst the different known species of ants (Formicidae), ranging in the 20\,000s, there are many ways in which they use chemicals to organize collectively. Using chemical (olfactory) cues, they organize themselves into trails connecting their nests and food sources or into ant armies to capture moving prey \cite{attygalle1985ant,holldobler1995chemistry}. The foraging trails can last for months and extend over vast distances. It is common to see ants move bidirectionally along the trails, split into incoming and outgoing groups, either bringing food back to the nest or going out to get new food \cite{fourcassie2010ant,knaden2016sensory}.

Ant colonies have long amazed researchers with their ability to regulate traffic and prevent jams in crowded conditions, as may occur in trails along confined spaces. In fact, ants appear to do better than humans at traffic regulation at high densities \cite{dussutour2004optimal}. In lab experiments using narrow bridges connecting the nest and a food source, researchers found that ants can sustain a constant flow at high densities as opposed to the typical reduction in flow due to congestion in traffic models \cite{poissonnier2019experimental}. To understand such behavior, one may turn to microscopic or individual-based models that track every single ant in the colony and their interactions. In the case of ants, as customary in active matter systems, one typically accounts for the time evolution of the position and the orientation of each ant. Microscopic models of active matter have been used to study a vast array of different complex behaviors such as lane formation \cite{bacik2023lane,burger2016lane,sieben2017collective}, bird flocking \cite{carrillo2010asymptotic,chate2020dry,degond2013macroscopic,kruk2020traveling}, biological flows \cite{ohm2022weakly,saintillan2015theory} and clustering \cite{bruna2022phase,buttinoni2013dynamical,cates2015motility,liebchen2018synthetic}.

Ants utilize different pheromone molecules for various tasks such as trail formation, navigation, bridge building, nestmate recognition, and alarming in cases of danger; see, for example, \cite{blomquist2021chemical,czaczkes2015trail,david2009trail} and references therein.

Our starting point is an individual-based model proposed in \cite{dobramysl2023argentine}, following the experimental data of \cite{poissonnier2019experimental}. It consists of a set of stochastic differential equations (SDEs) for the position and orientation of each ant and a partial differential equation (PDE) describing the evolution of the pheromone concentration. The SDE model represents each ant as an active Brownian particle; that is, the position evolves according to a Brownian motion with a bias in the direction of the orientation, and the orientation changes gradually by a periodic Brownian motion. Ants change their orientations to align with the upward gradient of a pheromone field they lay. This mechanism is akin to that of autophoretic colloids studied in \cite{liebchen2017phoretic,pohl2014dynamic}, leading to a so-called Active Attractive Alignment (AAA) model in \cite{liebchen2019interactions}. These active colloids, which are synthetically manufactured and used as micro-engines and cargo carriers, are seen in experiments to undergo dynamic clustering even at slow densities of less than 10\% \cite{liebchen2019interactions}. The ant model in \cite{dobramysl2023argentine} differs from the AAA model in that the chemical sensing is not at the particle' center but at a \emph{look-ahead distance} that represents the location of the ants' antennas. We will show that this difference is critical in the type of instabilities the two models display.  

The SDE model described in \cite{dobramysl2023argentine} can be used to simulate a small number of ants and address questions such as whether ants can form and sustain trails, but the computations become expensive for large numbers. Interacting particle systems with many particles can alternatively be studied using macroscopic models. One way of obtaining a suitable macroscopic model is via a mean-field limit for a large number of particles. This framework has been used in many studies of collective behavior \cite{carrillo2014derivation}. Collective phenomena that arise via chemically interacting particles (chemotaxis) are an example successfully studied using mean-field models. A celebrated macroscopic model for bacterial chemotaxis is the Keller--Segel model \cite{Keller:1971eo}
\begin{subequations}
	\label{eq:kellersegel}
\begin{align}\label{eq:kellersegel1}
\frac{\partial \rho}{\partial t} & = \nabla\cdot[\nabla\rho - \chi \rho \nabla c],\\
\frac{\partial c}{\partial t} & = \Delta c + \rho -\alpha c, \label{eq:kellersegel2}
    \end{align}
\end{subequations}
where $\rho(t,\xx)$ is the bacteria population density, $c(t, \xx)$ is the chemical field, $\chi$ is the chemotactic sensitivity and $\alpha$ is the degradation  rate of the chemical attractant. Model \eqref{eq:kellersegel} was devised to study aggregation phenomena in bacterial populations, and the PDE was proposed on phenomenological grounds \cite{keller1970initiation, Keller:1971eo}. Subsequently, \eqref{eq:kellersegel} has been rigorously shown to be the limit as $N\to\infty$ of specific interacting particle systems. For example, Stevens \cite{stevens2000derivation} considered a stochastic system of moderately interacting particles as a starting point and obtained \eqref{eq:kellersegel} using a limiting procedure based on Oelschl\"ager \cite{Oelschlager:1985kv}. In the case that the left-hand side of \eqref{eq:kellersegel2} is set to zero $\partial_t c=0$, the Keller--Segel model has also been shown to be the mean-field limit of a system of weakly interacting particles at the level of PDEs for the averaged densities \cite{bresch2019mean} and by \cite{10.1214/16-AAP1267} at the empirical (random) density level. More recently, the results of \cite{10.1214/16-AAP1267} have been extended in  \cite{fournier2023particle} to the case  $\partial_t c\ne0$.

The behavior of the Keller--Segel model \eqref{eq:kellersegel} has been extensively studied. Initially, Keller and Segel derived a linear instability criterion for a more general model than (\ref{eq:kellersegel}) on a bounded domain with no-flux boundary conditions \cite{keller1970initiation}. In the notation of (\ref{eq:kellersegel}), the linear instability criterion (the \emph{Keller--Segel instability criterion}) is $\chi c_0>1$ for instability around the homogeneous steady states $\rho=\rho_0$ and $c=c_0$. Later, it was confirmed at a nonlinear level: when $\partial_t c=0$, if the mass $\int\rho \ \dd\xx$ is above a critical threshold depending on the domain geometry and $\chi$, solutions blow up in finite time. In contrast, below such mass, they will decay to zero (or to a homogeneous state on a bounded domain) \cite{blanchet2008infinite,blanchet2006two,horstmann2003,nagai2001blowup}. Note that this is equivalent to fixing $\int \rho \, {\rm d} \xx = 1$ and varying the chemical sensitivity $\chi$; this is the convention we will follow in this paper.

Versions of the Keller--Segel model with more terms were also considered for studying pattern formation in tissues and other microorganism aggregation phenomena. For example, one may include a Fisher-type competition term like $\rho(1-\rho)$ in the right-hand side of (\ref{eq:kellersegel}). This term generates multiple aggregation spots \cite{bellomo2015toward,painter2019mathematical}, instead of the collapse into one aggregation spot for the Keller--Segel model (\ref{eq:kellersegel}) with fewer terms. Another type of term that can be included is a diffusion parameter that depends on $\rho$. This is to model volume exclusion and mathematically prevent the model from blowing up \cite{bubba2020discrete,burger2006keller}. Furthermore, letting the chemical sensitivity depend on $\rho$ has been found to lead to traveling plateaus \cite{burger2008asymptotic}.

In this paper, we formally derive a PDE system for the ants' probability density $f(t, \xx, \theta)$ and the pheromone concentration $c(t,\xx)$ from the SDE model \cite{dobramysl2023argentine} using a mean-field approximation. From the equation for $f$, one can obtain a PDE for the spatial density $\rho(t, \xx) = \int f {\rm d} \theta$ and analyze it in the context of Keller--Segel type instabilities. In \cite{liebchen2017phoretic,pohl2014dynamic}, they show that the AAA model can produce immobile aggregates under suitable conditions and use a closure approximation to derive a PDE system for $\rho$ and $c$ of the form \eqref{eq:kellersegel} with effective diffusion and interaction parameters.  
From it, they obtain an analogous Keller--Segel instability criterion for an active chemotaxis system.
In \cite{fontelos2015pde}, they derive a PDE system from the AAA model phenomenologically and discuss the emergence of trails using a linear stability analysis.
Like the AAA model, the model we study is similar to the Keller--Segel model \eqref{eq:kellersegel}. In particular, pattern formation and blow-up phenomena, as studied for the Keller--Segel model, are useful frameworks for our model, and we can exploit some mathematical techniques developed for \eqref{eq:kellersegel}. However, since our model extends in phase space, it is substantially more complex and, as we will show, this results in more complex and higher dimensional patterning. Moreover, unlike other active matter models---which require a high enough chemotactic sensitivity \cite{blanchet2006two} or P\'eclet number \cite{cates2015motility} for instabilities to arise---our ant model displays clustering for arbitrarily small P\'eclet numbers. 

The structure of this paper is as follows. In Section \ref{sec:model}, we present the individual-based model of an ant colony and the associated macroscopic PDE model. We explain the look-ahead mechanism used to model ant antennas and show typical particle behavior for our model. Then, we present results for the macroscopic model in Section \ref{sec:results}. In Section \ref{sec:noblowup}, we prove that the solutions for this model do not blow up. Then, in Sections \ref{sec:uniqstat} and \ref{sec:nonlinstab}, for small enough interaction strength, we show that the stationary solution must be unique and nonlinearly stable in the linearly stable region, respectively. The linearly stable region is analyzed in Section \ref{sec:linstab} by means of numerical simulations. Lastly, the linearly unstable region is probed in Section \ref{sec:num} using a finite-volume method. We also compare the outcomes of the finite-volume method with the linear theory and provide a new quantification for lane formation.
\section{Model}
\label{sec:model}

\subsection{Microscopic model}\label{partmod}

We consider $N$ ants with positions and orientations $(\XX_i,\Theta_i)_{i=1}^N\in(\T^2_L\times\T_{2\pi})^N$, where $L$ is the length of the spatial periodic 2D box. The particles undergo the following multi-dimensional diffusion process \cite{dobramysl2023argentine}:
\begin{subequations}
	\label{sde_model}
\begin{align} 
    \dd \XX_i &= v_0 \bf{e}(\Theta_i)\dd t+\sqrt{2D_T}\dd \bf{W}_i^T,\label{eq:dX}\\
    \dd \Theta_i &= \gamma\bf{n}(\Theta_i)\cdot\nabla c_N(\XX_i+\lambda\bf{e}(\Theta_i))\dd t+\sqrt{2D_R}\dd W_i^R,\label{eq:dTheta}
\end{align}
where $\bf{e}(\theta)=(\cos\theta,\sin\theta)$ is the orientation vector, $\bf{n}(\theta) = (-\sin \theta, \cos \theta)$, and $\bf{W}_i^T$ and $W^R_i$ are independent periodic Brownian motions in two and one dimensions, respectively.
The constant $v_0$ represents the constant speed at which an ant moves along the direction ${\bf e}(\theta)$. This mechanism is the typical active transport term found in standard active matter models \cite{cates2015motility}. The constants $D_T$ and $D_R$ are the translational and rotational diffusion coefficients, respectively. The translational diffusion can be interpreted as a random force caused by small particle-environment or particle-particle volume interactions. The rotational diffusion can be construed as the baseline of rotational movement of the orientation of an individual ant in isolation. 
In \eqref{eq:dTheta}, $c_N(\xx)$ is a continuous chemical field that models the pheromone that the ants secrete and through which they interact with each other, $\gamma$ is the chemotactic sensitivity, and $\lambda$ is the look-ahead distance at which the particles detect the chemical gradient. This models ants having sensing antennas (see \cref{fig:lookahead}). 
\begin{figure}[htb]
    \centering
    \includegraphics[width=0.5\textwidth]{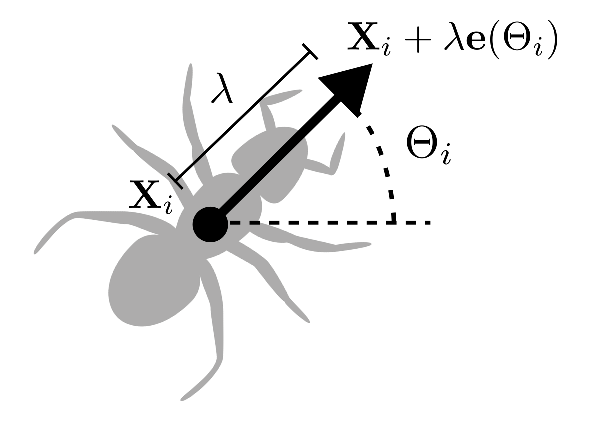}
    \caption{An explanation of the look-ahead distance mechanism.}
    \label{fig:lookahead}
\end{figure}
The chemical field causes a torque on the orientation $\Theta_i$, so ants align their orientation with the field's upward gradient $\nabla c_N$. 
The pheromone concentration $c_N$ satisfies the following equation
\begin{equation}
	    0= D\Delta c_N-\alpha c_N+\eta\rho_N, \label{eq:c0}
\end{equation}
\end{subequations}
where $D,\alpha$ and $\eta$ represent the diffusivity, decay and production rate, respectively, of $c_N$, and $\rho_N$ is the ants' spatial empirical measure, defined as
\begin{equation}\label{eq:empdens}
    \rho_N(t, \xx)=\frac{1}{N}\sum_{i=1}^N\delta(\xx - \XX_i(t)).
\end{equation} 

 The chemical field is modeled as instantaneously diffusing by setting $\partial_t c=0$. This reflects the pheromone diffusing much faster than the particle speeds \cite{liebchen2019interactions,liebchen2017phoretic,pohl2014dynamic}. For ants, this model is a relatively lightweight and short-lived pheromone that diffuses fast as used for navigation and trail formation \cite{czaczkes2015trail}.
 
\begin{remark}[Singularities of $c_N$]
A rigorous definition of \eqref{sde_model} poses challenges because the solution of the equation \eqref{eq:c0} for the concentration $c_N$ has singularities at positions $\xx=\XX_i(t)$.
To make this clear, note that one may write
\begin{equation}\label{eq:K0}
    c_N(t,\xx)=\frac{1}{N}\sum_{i=1}^N K_{\alpha,D}(\xx-\XX_i(t)),
\end{equation} 
where $K_{\alpha,D}(\xx)=K_0(\sqrt{\frac{\alpha}{D}}|\xx|)$ and $K_0:(0,+\infty)\to\R$ is the modified Bessel function of the second kind (see, e.g., \cite[\S II.3]{schwartz1966theorie}). The distance $|\cdot|$ here is the minimal Euclidean distance between two points on $\T_L^2$. 
Since $K_0$ blows up at the origin, we find that $c_N(t,\xx)$ is not defined at $\xx = \XX_i$ for $i = 1, \dots, N$. As a result, the alignment interaction or drift term in \cref{eq:dTheta} is non-Lipschitz with singularities whenever $\XX_i(t) + \lambda {\bf e}(\Theta_i(t)) = \XX_j(t)$, $i,j = 1, \dots, N$. If $\lambda > 0$, this can only happen if a second ant $j$ is exactly at the location of the first ant $i$ antennas, an event with probability zero. In contrast, if $\lambda = 0$, we must remove the self-interactions (cases $j = i$) as otherwise we would be evaluating $c_N$ in \cref{eq:dTheta} at a singular point. See \cref{mflimit} for more details on how to treat each case differently. 
This paper considers \cref{sde_model} as a formal SDE system. 
\end{remark}

\cref{fig:curvelambda} shows two sets of typical trajectories of the individual-based model \eqref{sde_model} for $N=8$ particles. We used a tamed Euler scheme for the SDEs \cite{hutzenthaler2012strong} and the expression \eqref{eq:K0} for $c_N$ (see Appendix \ref{sec:particlenumscheme} for details). Figure \ref{fig:curvelambda0} corresponds to setting the look-ahead parameter $\lambda$ in equation \eqref{eq:dTheta} to zero, and it thus coincides with the AAA model \cite{liebchen2019interactions}. It shows the typical Keller--Segel collapse: particles attract and collapse into clusters that remain stationary. Turning on the look-ahead mechanism ($\lambda>0$) results in a marked change to the system behavior (\cref{fig:curvelambda1}). Specifically, particles come together into a motile cluster with a persistent motion, characteristic of ant colony trails \cite{perna2012individual}.
\begin{figure}[htb]
\centering
    \subfloat[$\lambda=0$]{\includegraphics[width=0.4\textwidth]{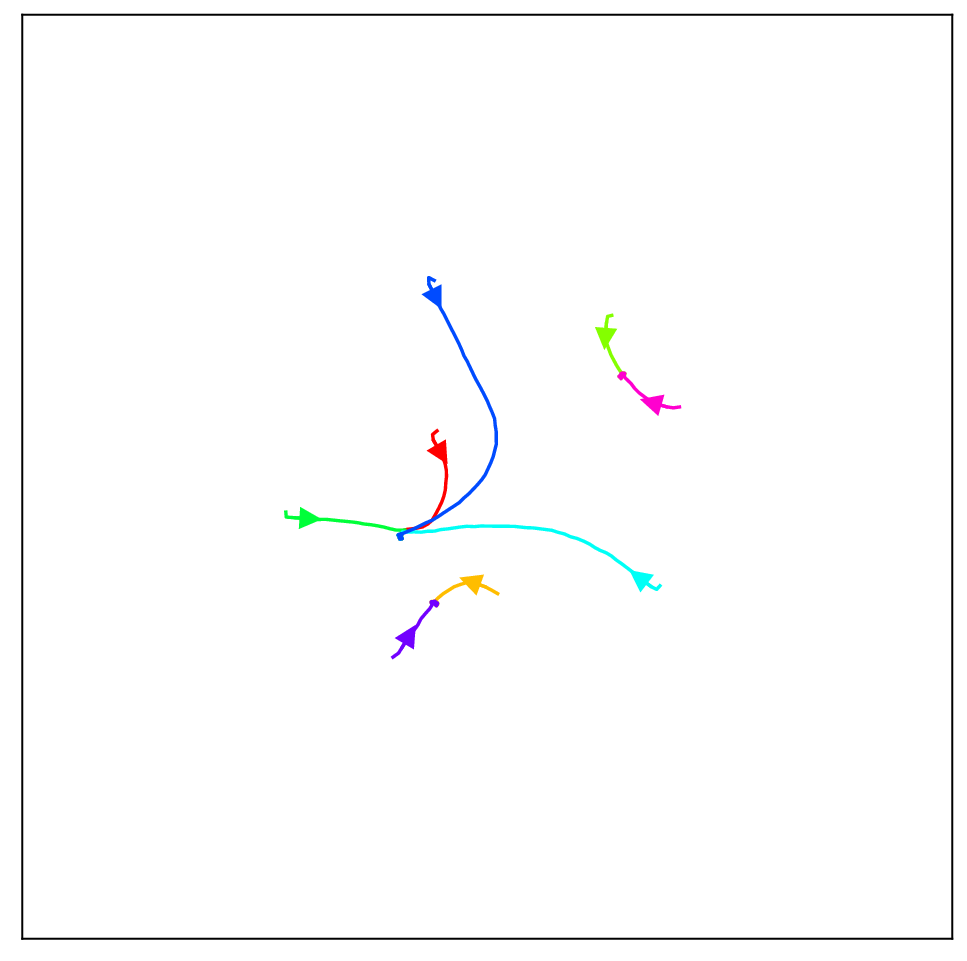}\label{fig:curvelambda0}}
    \subfloat[$\lambda=0.1$]{
    \includegraphics[width=0.4\textwidth]{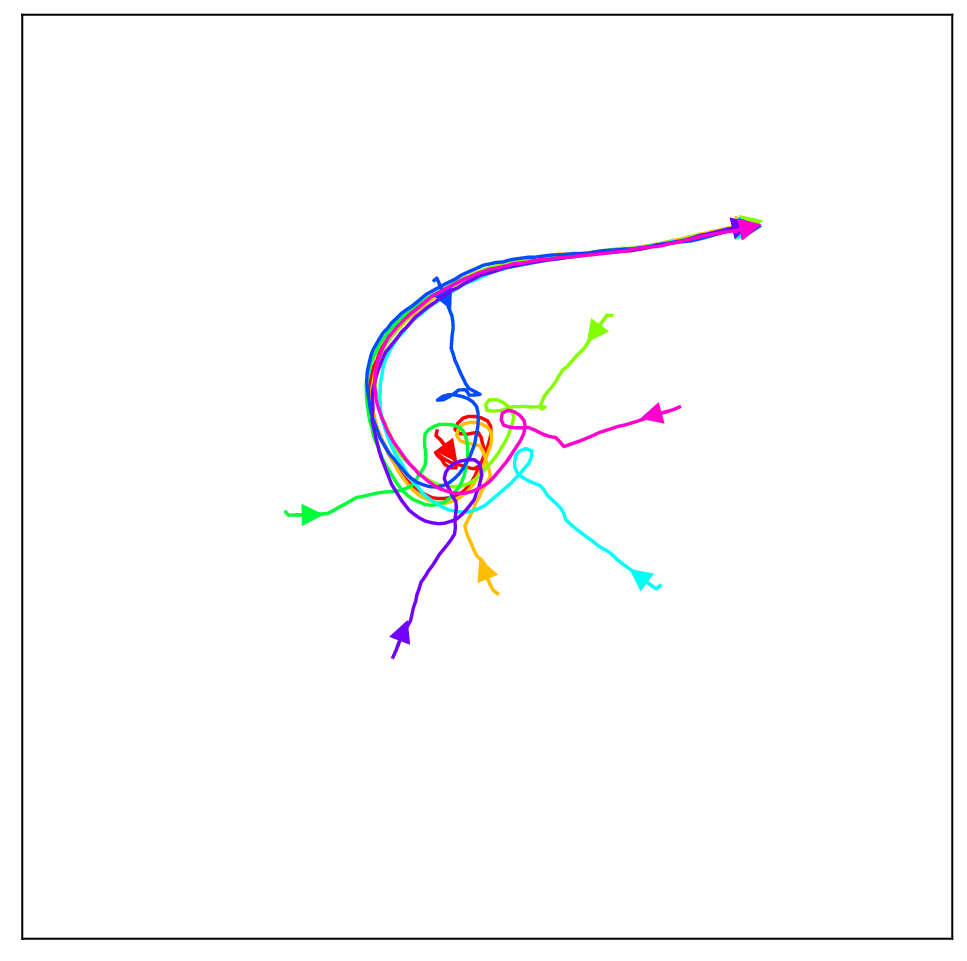}\label{fig:curvelambda1}}
\caption{Typical particle trajectories for particle simulations of the model with and without look-ahead mechanism. The arrows indicate the direction of the flow of time. Parameters: $N=8,L=D_R=D=\eta=\alpha=1.0,D_T=10^{-4},v_0=7.0,\gamma=300.0,\Delta t=10^{-5},t_{\max}=0.2$.}
\label{fig:curvelambda}
\end{figure}

We see that model \cref{sde_model} can reproduce the formation of trails without the need for an external prompt (like, e.g., target points in the domain modeling food sources or the nest \cite{amorim2015modeling}) or ``leader'' particles that the rest follow as in leader-follower models \cite{bernardi2021leadership}.
Trail formation in ants has also been reproduced using a microscopic lattice-based model \cite{edelstein1995trail} or through polar pheromone particles (so that the pheromone field contains information of the direction in which ants that laid it are moving) \cite{boissard2013trail,mokhtari2022spontaneous}.
External prompts are used in pedestrian lane formation models, such as a flux condition or an external drift field for the shortest path \cite{gomes2019parameter}. Leader-follower models are common in developmental biology, where leaders represent cells with a different phenotype than the majority  \cite{mclennan2015neural} and in studies of swarm dynamics such as for bee colonies, finding optimal parameters allowing leader particles to steer follower particles effectively to a target (\cite{bernardi2021leadership} and references therein). 

\subsection{Macroscopic model}

We next present the macroscopic model, obtained by considering the mean-field limit of \eqref{sde_model}. One of our goals will be to study whether the resulting PDE system can reproduce the two distinct behaviors of the microscopic model observed in \cref{fig:curvelambda}. 

Let $f(t,\xx,\theta)$ be the macroscopic ant's probability density depending on the spatial position $\xx \in \T_L^2$, the orientation $\theta \in \T_{2\pi}$ and the time $t$. In the large particle limit $N\to\infty$, the formal mean-field limit of the SDE system \eqref{sde_model} is (see \cref{mflimit})
\begin{subequations} \label{eq:mfa}
\begin{align}
    \label{eq:f}
    \partial_t f&=\nabla_\xx\cdot[D_T\nabla_\xx f-v_0\bf{e}_\theta f]+\partial_\theta[D_R\partial_\theta f-\gamma\bf{n}_\theta \cdot\nabla_\xx c_\lambda f],\\
    0&=D\Delta_\xx c-\alpha c+\eta\rho,\label{eq:c}
\end{align}
\end{subequations}
where ${\bf e}_\theta = \bf{e}(\theta)$, ${\bf n}_\theta = \bf{n}(\theta)$,  $\rho(t,\xx)$ is the ant spatial density
 \begin{equation}\label{eq:rho}
    \rho(t,\xx)=\int_{0}^{2\pi}f(t,\xx,\theta)\dd\theta,
\end{equation}
$c(t, \xx)$ is the pheromone concentration and
\begin{equation}\label{eq:clambda}
	c_\lambda(t,\xx,\theta)=c(t,\xx+\lambda\bf{e}_\theta).
\end{equation} 
The ant density $\rho$ and the pheromone concentration $c$  correspond to the large $N$ limit of $\rho_N$ \eqref{eq:empdens} and $c_N$ \eqref{eq:K0} respectively. We are assuming $\int f\dd\xx\dd\theta=1$, since we are considering a probability density.

It is convenient to rescale time, space, and pheromone concentration to obtain a simplified version of \cref{eq:mfa}. In particular, we rescale time as  $t=T\hat{t}$, space as $\xx=L\hat{\xx}$ and chemical concentration as $c(\xx)=C_0\hat{c}(\hat\xx)$. We set $T=D_R^{-1},L=\sqrt{D/D_R}$ and $C_0=\eta/D_R$ and define the rescaled translational diffusion $\hat{D}_T=D_T/D$, the rescaled decay rate $\hat{\alpha}=\alpha/D_R$, the rescaled interaction strength $\hat{\gamma}=\eta\gamma/\sqrt{D D_R^3}$ and the Pecl\'et number $\Pe=v_0/\sqrt{D D_R}$. After dropping hats, the rescaled equations are 
\begin{subequations}\label{eq:fcresc}
    \begin{align}
    \partial_t f&=\nabla_\xx\cdot[D_T\nabla_\xx f-\Pe\bf{e}_\theta f]+\partial_\theta[\partial_\theta f-\gamma \bf{n}_\theta\cdot\nabla_\xx c_\lambda f],\label{eq:fresc}\\\
    0&=\Delta_\xx c-\alpha c+\rho.\label{eq:cresc}
\end{align}
\end{subequations}
Integrating equation (\ref{eq:fresc}) in $\theta$ gives \begin{equation}\label{eq:rho1}
    \partial_t\rho=\nabla\cdot[D_T\nabla\rho-\Pe\bf{p}],
\end{equation}
where \begin{equation}\label{eq:p}
    \bf{p}(t,\xx)=\int_0^{2\pi}\bf{e}_\theta f(t,\xx,\theta)\dd\theta,
\end{equation} is the polarisation. 
Comparing \cref{eq:rho1} with the Keller--Segel model \cref{eq:kellersegel}, we see that the polarisation $\bf{p}$ appears in the drift term.

\section{PDE analysis}\label{sec:results}

In this section, we study the macroscopic model \cref{eq:fcresc} analytically. We first define and show the existence of weak solutions up to time $T>0$. Then, we show that these solutions do not blow up at $t=+\infty$ in the $L^\infty$ norm. A corollary of this is that the solutions are unique.

The domain of positions and orientations is $\Sigma = \T^2\times\T_{2\pi}$, where $\T$ has length one and $\T_{2\pi}$ length $2\pi$. We use the notation $\xi\in\Sigma$ for the pair $(\bf{x},\theta)$. We will also use $\Omega$ for the spatial domain $\T^2$. For $1\leq p\leq+\infty$, we define  
\begin{align}
    L^p(\Sigma)=\{f:\Sigma\to\R \text{ measurable and periodic}|\|f\|_{L^p(\Sigma)}<+\infty\}.
\end{align} 
For $k=0,1,2,\dots$, $1\leq p\leq+\infty$ and $\beta\geq 0$ we define the Sobolev spaces $H^k(\Sigma)$, $W^{k,p}(\Sigma)$ and the H\"older spaces $C^{k,\beta}(\Sigma)$ analogously. The spaces in $\Omega$ are analogously defined. Unless explicitly written, the domain for integrals and function spaces is $\Sigma$.

We also define the Bochner space $L^p(0,T;\mathcal{X})$, for any Banach space $\mathcal{X}$, as 
\begin{equation}
    L^p(0,T;\mathcal{X})=\Bigg\{f:[0,T]\to\mathcal{X} \text{ measurable}\  | \int_0^T\|f(t)\|_{\mathcal{X}}^p\dd t<+\infty\Bigg\},
\end{equation} 
with the $L^p$ norm on $[0,T]$. We also use the notation $L^p_T\mathcal{X}$ for this space. Other regularity time-dependent spaces are analogously defined. For any Banach space $\mathcal{X}$, we define $\mathcal{X}'$ as its continuous dual space.
\begin{definition}[Weak solution of macroscopic model]
\label{def:weaksol}
    A weak solution to \cref{eq:fcresc} with initial data $f_\text{in}\in L^2$ up to time $T > 0$ is a function $f\in L^2(0,T;H^1)$, with $\partial_t f\in L^2(0,T;(H^1)')$,  such that for every $\varphi\in L^2(0,T;H^1)\cap L^2(0,T;L^\infty)$ 
\begin{multline}\label{eq:weaksol}
     \int_0^T\langle \partial_t f,\varphi\rangle\dd t=\int_0^T\Big[\langle D_T\nabla_\xx f,\nabla_\xx\varphi\rangle-\langle\Pe\bf{e}_\theta f,\nabla_\xx\varphi\rangle \\
     +\langle \partial_\theta f,\partial_\theta\varphi\rangle+\langle\gamma\partial_\theta(\bf{n}_\theta\cdot\nabla c_\lambda f),  \varphi\rangle \Big  ]\dd t,
\end{multline}
and $f(t=0)=f_\text{in}$ in $L^2$. The chemical field $c$ is defined as the unique strong solution in $L^2(0,T;H^2(\Omega))\cap H^1(0,T;H^1(\Omega))$ of $0=\Delta c-\alpha c+\rho$ for $\rho=\int_0^{2\pi} f\dd\theta \in L^2(0,T;L^2(\Omega)) \cap H^1(0,T;(H^1(\Omega))')$.
\end{definition}

Let us mention the somehow non-standard definition of the weak solution in the last term of \cref{eq:weaksol}, where we do not use integration by parts, since the derivatives of $c_\lambda$ and $f$ separate. More precisely, we have
\begin{equation}
    \partial_\theta(\bf{n}_\theta\cdot\nabla_\xx c_\lambda f) \varphi = (-\bf{e}_\theta\cdot\nabla_\xx c_\lambda   + \lambda \bf{n}_\theta D^2_\xx c_\lambda \bf{n}_\theta )  f \varphi  + \bf{n}_\theta\cdot\nabla_\xx c_\lambda \partial_\theta f \varphi. 
\end{equation}
Note that by standard results about Bochner spaces we have $ f \in L^\infty_T L^2$ and thus $c_\lambda \in L^\infty(0,T;H^2(\Omega))$. Together with $f, \partial_\theta f \in L^2_T L^2 ,\varphi \in  L^2_T L^\infty$ as well as the fact that $\bf{n}_\theta$ and $\partial_\theta \bf{n}_\theta $ are unit vectors, we see that the integrand is in $L^1$, i.e., the weak formulation is well-defined in the function spaces we use. 

\subsection{Well-posedness and no blow up}\label{sec:noblowup}

In this section, we will verify the well-posedness of the PDE model and show that there is no finite or infinite-time blow-up.
\begin{lemma}\label{lem:rhobound}
    Let $T>0$ and $f_\ini\in L^2$  be non-negative. Moreover, let $f$ be a non-negative weak solution of 
    \begin{equation} 
    \partial_t f =\nabla_\xx\cdot[D_T\nabla_\xx f-\Pe\bf{e}_\theta f]+\partial_\theta[\partial_\theta f+ vf],
    \end{equation}
    for some time-dependent scalar field $v$. Then the spatial density $\rho =  \int f d\theta$ satisfies
    \begin{equation}
        \int_\Omega \rho^2 \,\dd\xx \leq \e^{Ct} \int_\Omega \rho_\ini^2 \,\dd\xx,
    \end{equation}
     with $C= \frac{\Pe^2}{2D_T}$ and $\rho_\ini=\int f_\ini\dd\theta$. 
\end{lemma}
\begin{proof}
    Using a test function independent of $\theta$ we see that $\rho$ is a non-negative weak solution of  (\ref{eq:rho}), i.e.,
    \begin{equation}
        \partial_t \rho =\nabla_\xx\cdot\left[D_T\nabla_\xx \rho-\Pe \int_0^{2\pi} \bf{e}_\theta f~\dd\theta\right].
    \end{equation}
    Thus, a standard $L^2$ estimate yields 
    \begin{equation}\frac{\dd}{\dd t} \int_\Omega \rho^2 ~\dd\xx \leq - 2 D_T \int_\Omega|\nabla_\xx \rho|^2~\dd\xx +
    2\Pe \int_\Omega \nabla_\xx \rho \cdot\left(\int_0^{2\pi} \bf{e}_\theta f~\dd\theta \right)~\dd\xx.
    \end{equation}
    Now, it is easy to see that for non-negative $f$
    \begin{equation}
        \left\vert\int_0^{2\pi} \bf{e}_\theta f~\dd\theta \right\vert \leq \int_0^{2\pi}\vert \bf{e}_\theta \vert f~\dd\theta = \rho,
    \end{equation}
    and hence, Young's inequality implies 
    \begin{equation}
        \frac{\dd}{\dd t} \int_\Omega \rho^2 ~\dd\xx \leq \frac{\Pe^2}{2D_T} \int_\Omega \rho^2 ~\dd\xx.
    \end{equation}
    Finally, Gr\"onwall's lemma implies the assertion.   
\end{proof}

As a next step, we establish basic estimates for the density $f$ in three dimensions:
\begin{lemma} \label{lem:fbound}
Assume that, for initial value $f_\ini \in L^2$, there is a weak solution $f$ of
   \begin{equation}\label{eq:it}
         \partial_t f=\nabla_\xx\cdot[D_T\nabla_\xx f-\Pe\bf{e}_\theta f]+\partial_\theta[\partial_\theta f+ vf],
    \end{equation}
    with given $v \in L^2_T L^2$ such that 
    $$ \|\partial_\theta v\|_{L^2(0,T;L^2)} \leq M_0$$  
    Then, $f$ is bounded in $L^\infty(0,T;L^2) \cap L^2(0,T;H^1)$ with bounds depending only on $\Vert f_\ini \Vert_{L^2}$, $M_0$, $D_T$, and $\Pe$. Moreover, the weak solution $f$ is unique.
\end{lemma}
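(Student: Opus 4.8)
The plan is to run the standard $L^2$ energy estimate for $f$, close it with Gr\"onwall's lemma, and deduce uniqueness from the linearity of \cref{eq:it} in $f$. Since existence of a weak solution is assumed, I only need the a priori bound plus a uniqueness argument. To make the formal computation rigorous one tests \cref{eq:it} with $f$ itself (justified by a Galerkin or mollification approximation, since $f$ need not lie in the class $L^2_TL^\infty$ of admissible test functions of \cref{def:weaksol}), so that $\langle \partial_t f, f\rangle = \tfrac12 \tfrac{\dd}{\dd t}\|f\|_{L^2}^2$. Using the periodicity of $\Sigma = \T^2\times\T_{2\pi}$ (so there are no boundary terms), integration by parts gives the energy identity
\[ \frac12\frac{\dd}{\dd t}\|f\|_{L^2}^2 = -D_T\|\nabla_\xx f\|_{L^2}^2 - \|\partial_\theta f\|_{L^2}^2 + \Pe\int \bf{e}_\theta f\cdot\nabla_\xx f - \int v f\,\partial_\theta f, \]
whose two negative terms are the available dissipation, controlling the full phase-space gradient $(\nabla_\xx f, \partial_\theta f)$.

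For the convective term I would use $|\bf{e}_\theta| = 1$ together with Young's inequality, exactly as in the proof of \cref{lem:rhobound}, to write $\Pe\int \bf{e}_\theta f\cdot\nabla_\xx f \le \tfrac{D_T}{2}\|\nabla_\xx f\|_{L^2}^2 + \tfrac{\Pe^2}{2D_T}\|f\|_{L^2}^2$, so that half of the spatial dissipation is absorbed and only the harmless zeroth-order term $\tfrac{\Pe^2}{2D_T}\|f\|_{L^2}^2$ remains. This is why the constant $C=\Pe^2/(2D_T)$ reappears.

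The crux is the drift term $\int v f\,\partial_\theta f$, where $v$ is only $L^2$ in phase space. The idea is to move the $\theta$-derivative off $f$ by integrating by parts in $\theta$, $-\int v f\,\partial_\theta f = \tfrac12\int (\partial_\theta v)\, f^2$, so that the estimate only sees $\partial_\theta v$, which is exactly the quantity the hypothesis controls. I would then bound $\int (\partial_\theta v) f^2 \le \|\partial_\theta v\|_{L^2}\|f\|_{L^4}^2$ and invoke the Gagliardo--Nirenberg inequality on the \emph{three-dimensional} domain $\Sigma$, namely $\|f\|_{L^4}^2 \le C\|f\|_{H^1}^{3/2}\|f\|_{L^2}^{1/2}$, followed by Young's inequality to absorb the factor $\|f\|_{H^1}^{3/2}$ into the remaining $H^1$-dissipation. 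This leaves a term of the form $C\|\partial_\theta v\|_{L^2}^{4}\|f\|_{L^2}^2$, and collecting everything yields a differential inequality
\[ \frac{\dd}{\dd t}\|f\|_{L^2}^2 + \kappa\|f\|_{H^1}^2 \le a(t)\,\|f\|_{L^2}^2, \qquad a(t) = \frac{\Pe^2}{D_T} + C\,\|\partial_\theta v(t)\|_{L^2}^4, \]
for some $\kappa>0$ depending only on $D_T$.

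From here Gr\"onwall's lemma gives the $L^\infty(0,T;L^2)$ bound, and integrating the inequality in time then gives the $L^2(0,T;H^1)$ bound; both depend only on $\|f_\ini\|_{L^2}$, $\Pe$, $D_T$ and on $\int_0^T a\,\dd t$. The main obstacle is precisely this last integral: in three dimensions Gagliardo--Nirenberg forces the fourth power $\|\partial_\theta v\|_{L^2}^4$ (whereas in two dimensions Ladyzhenskaya's inequality $\|f\|_{L^4}^2\le C\|f\|_{H^1}\|f\|_{L^2}$ would produce the gentler, directly integrable square $\|\partial_\theta v\|_{L^2}^2$), so closing the Gr\"onwall step hinges on the time-integrability of $\|\partial_\theta v\|_{L^2}^4$. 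This is the delicate point where the bound $\|\partial_\theta v\|_{L^2_T L^2}\le M_0$ must be exploited, interpolating it against the $L^\infty_T L^2$ information the estimate itself produces (or against the available dissipation). Uniqueness is then immediate from linearity: the difference $w=f_1-f_2$ of two weak solutions with the same data and the same $v$ solves \cref{eq:it} with $w(0)=0$, so the identical energy estimate gives $\|w(t)\|_{L^2}^2 \le e^{\int_0^t a}\|w(0)\|_{L^2}^2 = 0$.
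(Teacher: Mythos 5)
Your overall route is the same as the paper's: test \eqref{eq:it} with $f$ (made rigorous by an approximation argument), integrate by parts in $\theta$ to convert $-\int v f\,\partial_\theta f$ into $\tfrac12\int(\partial_\theta v)f^2$, estimate this via $\|\partial_\theta v\|_{L^2}\|f\|_{L^4}^2$ plus an interpolation/Young step, close with Gr\"onwall, and obtain uniqueness from linearity. Two remarks on the easy parts. The transport term does not need Young's inequality --- it vanishes identically: since $\bf{e}_\theta$ is independent of $\xx$ we have $\Pe\int\bf{e}_\theta f\cdot\nabla_\xx f\,\dd\xi=\tfrac{\Pe}{2}\int\bf{e}_\theta\cdot\nabla_\xx(f^2)\,\dd\xi=0$, so the constant $\Pe^2/(2D_T)$ has no business in this lemma (it belongs to Lemma~\ref{lem:rhobound}, where the flux $\int\bf{e}_\theta f\,\dd\theta$ genuinely appears). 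Your bound is still valid, just wasteful. The uniqueness step is fine and identical to the paper's.

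The genuine problem is the one you flag yourself and then do not resolve. On the three-dimensional domain $\Sigma$ the interpolation $\|f\|_{L^4}^2\le C\|f\|_{L^2}^{1/2}\|f\|_{H^1}^{3/2}$ forces, after Young, the coefficient $\|\partial_\theta v(t)\|_{L^2}^4$ in your Gr\"onwall rate $a(t)$, while the hypothesis $\|\partial_\theta v\|_{L^2_TL^2}\le M_0$ only controls $\int_0^T\|\partial_\theta v\|_{L^2}^2\,\dd t$. Your suggested repair --- interpolating against the $L^\infty_TL^2$ information ``the estimate itself produces'' or against the dissipation --- cannot work: $v$ is an externally given field, independent of $f$, so no information about $f$ improves the time-integrability of $\|\partial_\theta v\|_{L^2}^4$. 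As written, your argument proves the lemma only under the stronger hypothesis $\partial_\theta v\in L^4(0,T;L^2)$. (This stronger hypothesis does hold in the paper's application, where $\partial_\theta v$ is controlled in $L^\infty_TL^2$ through the set $\mathcal{M}$, so nothing downstream would break.) Be aware that the paper's display \eqref{ineq:lambda4} asserts the bound with $\|\partial_\theta v\|_{L^2}^2$ rather than the fourth power; a scaling check (take $f$ concentrated at scale $\delta$ with $\|f\|_{L^2}=1$, so $\|f\|_{L^4}^2\sim\delta^{-3/2}$ and $\|\nabla_\xi f\|_{L^2}^2\sim\delta^{-2}$, and let $\|\partial_\theta v\|_{L^2}\sim\delta^{-1/2}$) shows that such an inequality cannot follow from Cauchy--Schwarz, the 3D Sobolev embedding and Young alone with constants uniform in $f$ and $v$. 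So the difficulty you ran into is real and is latent in the paper's own proof; it must be closed either by strengthening the integrability assumption on $\partial_\theta v$ or by a different treatment of $\int(\partial_\theta v)f^2$, not by the interpolation you sketch.
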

\begin{proof}
First of all, we establish for a.e. $t \in [0,T]$ 
\begin{align}
        \frac{1}{2}\frac{\dd}{\dd t}\int f^2 \dd\xi&=\int\left(-D_T|\nabla_\xx f|^2-|\partial_\theta f|^2+\tfrac{1}{2} \partial_\theta v f^2 \right) \dd\xi 
    \end{align} 
 which is formally obtained by multiplying (\ref{eq:it}) by $f$ and integrating by parts. To derive this identity rigorously, we use the bounded test function \begin{equation}
        \varphi^K = \max\{\min\{f,K\},-K\} ,
    \end{equation}
    and let $K$ tend to infinity. The nonlinear term can be estimated using Cauchy--Schwarz, the Sobolev embedding in dimension three \cite[Corollary 9.14]{brezis2010functional}, and Young's inequality, which gives 
\begin{align} \label{ineq:lambda4}
\begin{aligned}
	   \int\tfrac{1}{2}\partial_\theta v  f^2\dd\xi &\leq \tfrac{1}{2}\|\partial_\theta v\|_{L^2} \|f\|_{L^4}^2\\
  &\leq \tilde{C}\left[(\tfrac{1}{\varepsilon}+\varepsilon)\|\partial_\theta v\|_{L^2}^2\|f\|_{L^2}^2+\varepsilon\|\nabla_\xi f\|_{L^2}^2\right],
\end{aligned}
\end{align} 
for any $\varepsilon>0$ and some constant $\tilde{C}>0$. Therefore,  we find\begin{equation}
    \begin{aligned}
    \frac{1}{2}\frac{\dd}{\dd t}\int f^2 \dd\xi\leq&-D_T\|\nabla_\xx f\|_{L^2}^2-\|\partial_\theta f\|_{L^2}\\&+  \tilde C \left[(\tfrac{1}{\varepsilon}+\varepsilon)\|\partial_\theta v\|_{L^2}^2 \|f\|_{L^2}^2+\varepsilon\|\nabla_\xi f\|_{L^2}^2\right].
\end{aligned}
\end{equation}
Hence, by making $\varepsilon$ small enough, which only depends on the fixed constants $C_\alpha$, $D_T$, $\tilde{C}$, we get \begin{align}
    \frac{1}{2}\frac{\dd}{\dd t}\int f^2 \dd\xi\leq&-\tilde{C}_2\|\nabla_\xi f\|_{L^2}^2+\tilde{C}_3 \|\partial_\theta v\|_{L^2}^2 \|f\|_{L^2}^2 \leq \tilde{C}_3 \|\partial_\theta v\|_{L^2}^2  \|f\|_{L^2}^2,
\end{align} for constants $\tilde{C}_2>0$ and $\tilde{C}_3>0$. Thus, by Gr\"onwall's lemma we obtain a uniform bound on $\int f^2(t) \dd\xi$ in time (for $t \leq T$), namely
\begin{equation} \|f(t)\|_{L^2}^2 \leq \e^{\tilde{C}_3 M_0 } \|f_\ini\|_{L^2}^2.
\end{equation}
 Thus, we see that $f$ is bounded in $L^\infty(0,T;L^2)$ and, using the above estimate again without dropping the term multiplied by $\tilde C_2$, we also obtain boundedness of $f$ in $L^2(0,T;H^1)$.

 Note that the proof does not rely on the non-negativity of $f$. Hence, it can also be applied to the difference between two non-negative solutions and implies uniqueness of solutions. 
\end{proof}
In our setup, we choose the field $v$ to be
\begin{equation}
v=-\gamma\bf{n}_\theta\cdot\nabla_\xx c_\lambda, \quad  
\partial_\theta v = \gamma \bf{e}_\theta\cdot\nabla_\xx c_\lambda - \gamma \lambda \bf{n}_\theta\cdot(D^2_\xx c_\lambda) \bf{n}_\theta.
\end{equation}
Thus, we have
\begin{align}  \|\partial_\theta v\|_{L^2(0,T;L^2)} &\leq 
\gamma \|\bf{e}_\theta\cdot\nabla_\xx c_\lambda \|_{L^2(0,T;L^2)} + \gamma \lambda \| \bf{n}_\theta\cdot(D^2_\xx c_\lambda) \bf{n}_\theta \|_{L^2(0,T;L^2)} \nonumber\\
&\leq \gamma \|\nabla_\xx c_\lambda \|_{L^2(0,T;L^2)} + \gamma \lambda \| D^2_\xx c_\lambda \|_{L^2(0,T;L^2)}
\nonumber\\
&= \gamma \|\nabla_\xx c  \|_{L^2(0,T;L^2)} + \gamma \lambda \| D^2_\xx c \|_{L^2(0,T;L^2)}.
\end{align}

\begin{theorem}\label{lem:exist}
    For any $T>0$ and $f_\ini\in L^2$ non-negative, we have existence of a non-negative weak solution of equation (\ref{eq:f}) as in Definition \ref{def:weaksol}. Moreover, the weak solution satisfies $f\in C(0,T;L^2)$. 
\end{theorem}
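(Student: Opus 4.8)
The plan is to treat \cref{eq:fcresc} as a nonlinear system in which the coupling between $f$ and $c$ enters only through the drift $v=-\gamma\bf{n}_\theta\cdot\nabla_\xx c_\lambda$, and to resolve this coupling by an iteration built on the two preceding lemmas. Concretely, I would define the solution map $\mathcal{T}$ sending a non-negative density $f$ to a new one as follows: from $\rho=\int_0^{2\pi}f\,\dd\theta$ solve the linear elliptic problem \cref{eq:cresc} for $c$, form $v=-\gamma\bf{n}_\theta\cdot\nabla_\xx c_\lambda$, and then solve the \emph{linear} parabolic equation \cref{eq:it} with this frozen $v$ to obtain $\mathcal{T}f$. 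Existence and uniqueness for the parabolic step is standard (Galerkin approximation together with the energy estimate, or the Lions–Lax–Milgram theorem for parabolic equations), with the quantitative bounds supplied by \cref{lem:fbound}; well-posedness and the regularity claimed in \cref{def:weaksol} for the elliptic step follow from standard theory for $-\Delta+\alpha$ on the torus. I would then construct the sequence $f^{(n+1)}=\mathcal{T}f^{(n)}$, extract a convergent subsequence, and show its limit is a fixed point, hence a weak solution in the sense of \cref{def:weaksol}.

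The decisive structural observation, which makes the a priori estimates uniform along the iteration, is that the bound of \cref{lem:rhobound} on $\rho$ is \emph{independent of the drift} $v$: it only uses $|\int\bf{e}_\theta f\,\dd\theta|\le\rho$ for non-negative $f$, giving $\|\rho(t)\|_{L^2(\Omega)}^2\le\e^{Ct}\|\rho_\ini\|_{L^2(\Omega)}^2$ with $C=\Pe^2/(2D_T)$. Hence, provided the iterates remain non-negative, elliptic regularity bounds $c$ in $L^\infty(0,T;H^2(\Omega))$ uniformly, and the computation preceding the theorem controls $\|\partial_\theta v\|_{L^2_TL^2}$ by $\gamma\|\nabla_\xx c\|_{L^2_TL^2}+\gamma\lambda\|D^2_\xx c\|_{L^2_TL^2}$, again uniformly. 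Feeding this $M_0$ into \cref{lem:fbound} yields uniform bounds on the iterates in $L^\infty(0,T;L^2)\cap L^2(0,T;H^1)$, and reading $\partial_t f$ off the weak form gives a uniform bound in $L^2(0,T;(H^1)')$. Non-negativity is preserved at each step by the maximum principle for \cref{eq:it}: testing with the negative part $(\mathcal{T}f)^-$ and applying Gr\"onwall shows it vanishes when $f_\ini\ge0$, which simultaneously licenses the use of \cref{lem:rhobound} above.

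For compactness I would invoke the Aubin–Lions–Simon lemma, which upgrades the $L^2(0,T;H^1)$ and $L^2(0,T;(H^1)')$ bounds to strong convergence of a subsequence in $L^2(0,T;L^2)$; in particular $\rho^{(n)}\to\rho$ strongly in $L^2(0,T;L^2(\Omega))$, and elliptic regularity then gives $c^{(n)}\to c$ strongly in $L^2(0,T;H^2(\Omega))$, so that $\nabla_\xx c^{(n)}_\lambda$ and $D^2_\xx c^{(n)}_\lambda$ converge strongly in $L^2_TL^2$. This is exactly what is needed to pass to the limit in \cref{eq:weaksol}: the linear terms pass by weak convergence, while the nonlinear terms are products of a strongly convergent $c$-factor with a weakly convergent $f$- or $\partial_\theta f$-factor (and since both $f^{(n)}$ and $f^{(n+1)}$ share the same limit $f$, the coupled equation is recovered). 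The limit $f$ is non-negative as a strong $L^2$ limit of non-negative functions, and the regularity $f\in L^2(0,T;H^1)$ with $\partial_t f\in L^2(0,T;(H^1)')$ yields $f\in C(0,T;L^2)$ through the standard Gelfand-triple embedding, which also makes the attainment of the initial datum meaningful.

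The main obstacle I anticipate is the passage to the limit in the look-ahead nonlinearity, which in the separated weak form contains the second spatial derivatives $\lambda\,\bf{n}_\theta(D^2_\xx c_\lambda)\bf{n}_\theta\,f$ and the product $\bf{n}_\theta\cdot\nabla_\xx c_\lambda\,\partial_\theta f$. Since $\partial_\theta f^{(n)}$ and $D^2_\xx c^{(n)}_\lambda$ are only controlled in $L^2$, one genuinely needs \emph{strong} $H^2$-convergence of $c^{(n)}$, not merely weak, and this in turn hinges on the strong $L^2$-convergence of $\rho^{(n)}$ delivered by Aubin–Lions. The real subtlety is to check that the a priori estimates close \emph{before} any smallness or short-time restriction is imposed; the fact that \cref{lem:rhobound} is unconditional in $v$ and global on $[0,T]$ is precisely what removes that difficulty, so that no continuation argument is required.
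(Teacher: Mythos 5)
The analytic ingredients you assemble --- the drift-independence of the bound in \cref{lem:rhobound}, the uniform $M_0$ for \cref{lem:fbound} via elliptic regularity for $c$, Aubin--Lions--Simon for $\rho$, and strong $H^2(\Omega)$ convergence of $c$ to pass to the limit in the look-ahead terms --- are exactly the ones the paper uses. The gap is in the topological step. A Picard iteration $f^{(n+1)}=\mathcal{T}f^{(n)}$ combined with compactness does not produce a fixed point: if $f^{(n_k)}\rightharpoonup f$ along a subsequence, continuity of $\mathcal{T}$ gives $f^{(n_k+1)}=\mathcal{T}f^{(n_k)}\rightharpoonup\mathcal{T}f$, but $(f^{(n_k+1)})_k$ is a \emph{different} subsequence of the iterates and nothing forces its limit to coincide with $f$; your parenthetical ``since both $f^{(n)}$ and $f^{(n+1)}$ share the same limit'' is precisely the unproved assertion. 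To make an iteration converge one needs the whole sequence to be Cauchy, i.e.\ a contraction estimate, which is exactly the kind of smallness or short-time restriction you (correctly) say you want to avoid. The paper instead feeds the same uniform estimates into Schauder's fixed-point theorem: it defines the map on the chemical field, $\mathcal{F}:\check c\mapsto c$, on the closed convex set $\mathcal{M}\subset L^2(0,T;H^2(\Omega))$ of fields with $\Vert\nabla c\Vert_{L^\infty_TL^2}+\Vert D^2c\Vert_{L^\infty_TL^2}\le M$, verifies that $\mathcal{F}$ is a continuous self-map (by the very Aubin--Lions argument you describe) with precompact image (via the extra regularity $c\in L^2(0,T;H^3(\Omega))\cap H^1(0,T;(H^1(\Omega))')$ and the compact embedding $H^3(\Omega)\hookrightarrow H^2(\Omega)$), and concludes. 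Your estimates are exactly the hypotheses of Schauder, so the repair is to replace the iteration by that theorem rather than to find new bounds.

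A secondary point: testing the frozen-drift equation with the negative part of the solution to get non-negativity requires the drift to be regular enough for that manipulation to be licit, and $v$ is only controlled in $L^2_TL^2$. The paper first mollifies $\check c$ so that $\nabla\check c^\epsilon, D^2\check c^\epsilon\in L^\infty(0,T;L^\infty(\Omega))$, invokes standard linear Fokker--Planck theory for existence, uniqueness and non-negativity of $f^\epsilon$, and only then passes $\epsilon\to 0$ using the uniform bounds of \cref{lem:fbound}; you should insert the same regularisation before your maximum-principle step.
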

\begin{proof}
    We define a fixed-point mapping ${\cal F}$ from $\check c$ to $c$ implicitly via solving the linear equation (\ref{eq:it})
  and $c$ being the solution of $0=\Delta c-\alpha c+\rho$  and $f(t=0)=f_\text{in}$. Here $\rho=\int_0^{2\pi} f\dd\theta$, where $f$ is a weak solution with $\check{c}$ such that $-\gamma\bf{n}_\theta\cdot\nabla_\xx \check{c}_\lambda=v$ for $v$ as in Lemma \ref{lem:fbound}. That is, we have a map \begin{equation}
      \mathcal{F}:L^2(0,T;H^2(\Omega))\to L^2(0,T;H^2(\Omega)), \ \check c\mapsto c.
  \end{equation}

More precisely, we investigate the fixed-point operator ${\cal F}$ in the strong topology of $L^2(0,T;H^2(\Omega))$, defined in the subset 
\begin{equation}
{\cal M} = \left \{c \in L^2(0,T;H^2(\Omega))~|~ c \geq 0, \Vert \nabla c \Vert_{L^\infty(0,T;L^2(\Omega))} +\Vert D^2 c \Vert_{L^\infty(0,T;L^2(\Omega))}\leq M \right \},
\end{equation}
with $M$ appropriately chosen (see below), depending on the bounds in Lemma \ref{lem:rhobound}. 

We first show existence and uniqueness of weak solutions $f$ given $\check c$, i.e., the well-definedness of the fixed-point operator. For this sake we first use a spatial smoothing $\check c^\epsilon$ of $\check c$, such that $\nabla \check c^\epsilon, D^2 \check c^\epsilon\in L^\infty(0,T;L^\infty(\Omega))$ and
\begin{equation}
\nabla \check c^\epsilon \rightarrow \nabla \check c,D^2\check c^\epsilon\to D^2\check c \quad \text{in}  \quad {L^\infty(0,T;L^2(\Omega))}\quad \text{as} \quad\epsilon\to 0,
\end{equation}
which implies existence and uniqueness of a non-negative weak solution $f^\epsilon$ by standard results on the linear Fokker--Planck equation \cite[Chapter 7, Theorem  3]{evans2010partial}. Now we can apply Lemma \ref{lem:fbound} to obtain uniform estimates for $f^\epsilon$, select a weakly converging subsequence and pass to the limit $\epsilon \rightarrow 0$ in the weak formulation. This implies the existence of a unique non-negative solution $f$ by Lemma 3.3. 

Similarly, we can now verify the continuity of the operator $\mathcal{F}$. Given a sequence $\check c_n$ in $\mathcal{M}$ converging in $L^2(0,T;H^2(\Omega))$ to some $\check c \in {\cal M}$ we establish the same uniform bounds for the solutions $f_n$ given $\check c_n$. By the Banach--Alaoglu theorem, we can extract weakly convergent subsequences of $f_n$ in $L^2(0,T;H^1) \cap L^\infty(0,T;L^2)$ and of $\partial_t f_n \in L²(0,T;(H^1)')$. For these, we can pass to the limit and obtain that the limit is a weak solution given $\check c$. Since the limit is unique, a standard argument implies weak convergence of the whole sequence $f_n$. For the corresponding spatial densities, Lemma \ref{lem:rhobound} allows us to derive a stronger result, which implies $\rho_n$ converges to $\rho$ in $L^2(0,T;H^1(\Omega)) \cap H^1(0,T;H^1(\Omega)') $. The Aubin--Lions--Simon theorem then implies strong convergence of $\rho_n$ to $\rho$ in $L^2$, and finally, the $L^2$ continuity of the solution operator of the Poisson equation implies the strong convergence of $c_n$ to $c$ in $L^2(0,T;H^2(\Omega))$.

To apply Schauder's fixed-point theorem and conclude the existence of a fixed point in ${\cal M}$, we need to show that ${\cal F}$ maps this closed set into a precompact subset. First of all, we use again Lemma \ref{lem:rhobound} and the continuity of the solution operator for the Poisson equation to conclude
\begin{equation}
   \Vert \nabla c \Vert_{L^\infty(0,T;L^2(\Omega))} +\Vert D^2 c \Vert_{L^\infty(0,T;L^2(\Omega))} \leq C_\alpha \Vert \rho \Vert_{L^\infty(0,T;L^2(\Omega))} \leq C_\alpha \e^{C T/2} \Vert \rho_\ini \Vert_{L^2(\Omega)},
\end{equation}
with the constant $C_\alpha$ depending only on $\alpha$. Choosing 
$M = C_\alpha \e^{C T/2} \Vert \rho_\ini\Vert_{L^2(\Omega)}$ we obtain the desired self-mapping. 

Finally, we use the identities
\begin{equation}
    -\Delta (\partial_t c) + \alpha (\partial_t c) = \partial_t \rho \in L^2(0,T;(H^1(\Omega))')
\end{equation}
and 
\begin{equation}
-\Delta (\partial_{x_i} c) + \alpha (\partial_{x_i} c) = \partial_{x_i} \rho \in L^2(0,T;L^2(\Omega))
\end{equation}
to conclude that 
\begin{equation}
c \in L^2(0,T;H^3(\Omega)) \cap H^1(0,T;(H^1(\Omega))').\end{equation}
We have the following continuous embeddings
\begin{equation}
    H^3(\Omega)\hookrightarrow H^2(\Omega)\hookrightarrow (H^1(\Omega))',
\end{equation}
 where $H^3(\Omega)\hookrightarrow H^2(\Omega)$ is also a compact embedding by the Rellich--Kondrachov theorem. Therefore, by the Aubin--Lions--Simon lemma
 \begin{equation}
    \{c\in L^2(0,T;H^3(\Omega)):\partial_t c\in L^2(0,T;(H^1(\Omega))')\}\subset L^2(0,T;H^2(\Omega))
\end{equation} as a compact embedding. This implies that ${\cal F}$ maps $\mathcal{M}$ into a precompact subset of ${\cal M}$ and hence the existence of a fixed point. The well-definedness of the map from $\check c = c$ to $f$ also implies the existence of a non-negative weak solution. 
\end{proof}
The well-posedness result of Lemma \ref{lem:exist} implies that the weak solutions exist globally in time. This result does not say what happens as $t\to+\infty$, e.g., whether there could be a blow-up at $t=+\infty$. We show that this is not possible in the following Proposition.
\begin{proposition}[No blow-up]
    Let $f$ be a weak solution to \cref{eq:fcresc} as in Definition \ref{def:weaksol} for some $T>0$, with non-negative initial data $f_{in}\in L^2 \cap L^\infty$. Then the solution extends up to any time $t\geq 0$ and $\sup_{t\geq 0}\|f(t)\|_{L^\infty}\leq C_\infty$ where $C_\infty$ depends on $D_T,\Pe,\gamma,\lambda,\alpha$ and $\|f_{in}\|_{L^\infty}$.
\end{proposition}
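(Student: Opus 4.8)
The plan is to establish the time-uniform $L^\infty$ bound by a two-stage bootstrap: first control $\nabla c$ uniformly in time through the closed equation \eqref{eq:rho1} for $\rho$, and then run a Moser-type iteration on the full kinetic density $f$ using that control. Global existence for every finite $T$ is already provided by Theorem~\ref{lem:exist}, so the only genuinely new content is the estimate uniform in $t$, which must improve the merely exponential-in-time bound of Lemma~\ref{lem:rhobound}.

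Stage one concerns $\nabla c$. Equation \eqref{eq:rho1} closes because the polarisation satisfies $|\bf{p}|\le\rho$, so I would repeat the energy computation of Lemma~\ref{lem:rhobound} but retain the full dissipation: for a fixed $q>2$ one gets $\tfrac{\dd}{\dd t}\|\rho\|_{L^q}^q + c_q\|\nabla\rho^{q/2}\|_{L^2}^2 \le C_q\|\rho\|_{L^q}^q$. The crucial ingredient is that the conserved mass $\|\rho\|_{L^1}=1$, together with the two-dimensional Gagliardo--Nirenberg inequality, turns the dissipation into a \emph{superlinear} function of $Y:=\|\rho\|_{L^q}^q$, namely $\|\nabla\rho^{q/2}\|_{L^2}^2\gtrsim (Y-C)^{q/(q-1)}$ with exponent $q/(q-1)>1$. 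The resulting differential inequality $\dot Y\le -c(Y-C)^{q/(q-1)}+CY$ has a bound uniform in time, since the superlinear sink dominates the linear source once $Y$ is large. Elliptic regularity for \eqref{eq:cresc} then gives $\|c\|_{W^{2,q}(\Omega)}\le C\|\rho\|_{L^q(\Omega)}$, and because $q>2$ the embedding $W^{2,q}\hookrightarrow C^1$ in two dimensions yields $\|\nabla c\|_{L^\infty(\Omega)}\le C_\ast$ with $C_\ast$ uniform in $t$ and depending only on $D_T,\Pe,\alpha$.

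Stage two is the Moser iteration for $f$. Testing \eqref{eq:fresc} against $f^{p-1}$ (rigorously against the truncation $\varphi^K$ of Lemma~\ref{lem:fbound}), the transport term $\Pe\bf{e}_\theta$ drops out because $\bf{e}_\theta$ is divergence-free in $\xx$. For the alignment term I would integrate by parts in $\theta$ \emph{exactly once}, producing the cross term $\tfrac{2\gamma(p-1)}{p}\int \bf{n}_\theta\!\cdot\!\nabla c_\lambda\, u\,\partial_\theta u\,\dd\xi$ with $u=f^{p/2}$; absorbing $\partial_\theta u$ into the rotational dissipation by Young's inequality leaves only $\int|\nabla c_\lambda|^2 u^2\le C_\ast^2\|u\|_{L^2}^2$. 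This single integration by parts is the key structural choice: it keeps only $\nabla c$, which Stage one controls uniformly, and avoids $D^2_\xx c$, which cannot be bounded in $L^\infty$ from $\rho\in L^q$ alone. One thus obtains $\tfrac{\dd}{\dd t}\|f\|_{L^p}^p + \kappa_p\|\nabla_\xi u\|_{L^2}^2 \le B_p\|f\|_{L^p}^p$ with $\kappa_p\gtrsim p^{-1}$ and $B_p\lesssim \gamma^2 C_\ast^2 p^2$. The three-dimensional embedding $H^1(\Sigma)\hookrightarrow L^6(\Sigma)$ converts the dissipation into control of $\|f\|_{L^{3p}}$; interpolating $\|f\|_{L^p}$ between $\|f\|_{L^{3p}}$ and the previous level $\|f\|_{L^{p/3}}$ and inserting the uniform bound $M_{p/3}:=\sup_t\|f\|_{L^{p/3}}$ again makes the dissipation superlinear in $Y_p=\|f\|_{L^p}^p$. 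As in Stage one this gives $\sup_t\|f\|_{L^p}\le \max\{\|f_\ini\|_{L^p},\,(B_p^3/\kappa_p^3)^{1/p}M_{p/3}\}$, and since $(B_p^3/\kappa_p^3)^{1/p}\le (Cp^{9})^{1/p}$ the recursion along $p_k=3^k p_0$ closes because $\sum_k p_k^{-1}\log(Cp_k^{9})<\infty$. Hence $\sup_k\sup_t\|f\|_{L^{p_k}}<\infty$, and letting $k\to\infty$ yields $\sup_{t\ge0}\|f\|_{L^\infty}\le C_\infty$ with the stated parameter dependence; the base level $M_{p_0}$ (with $p_0=2$) comes from the same superlinear argument applied to the $L^2$ estimate of $f$.

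The main obstacle is Stage two, and within it two linked points. The first is the algebraic decision to integrate by parts only once, so that the uncontrollable Hessian $D^2_\xx c$ never appears: a double integration by parts, as in the expansion of $\partial_\theta v$ in Lemma~\ref{lem:fbound}, produces a term scaling like $\gamma\lambda p\,\|\rho\|_{L^{3/2}}\|u\|_{L^6}^2$ that is critical and cannot be absorbed into the $\mathcal{O}(p^{-1})$ dissipation for large $p$. The second is the bookkeeping of the $p$-dependence of $\kappa_p$ and $B_p$ so that the infinite Moser product converges while every step stays uniform in $t$ through the superlinear-sink mechanism, rather than through a time-growing Grönwall factor that would only give finite-time bounds.
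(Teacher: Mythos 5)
Your proposal is correct and follows essentially the same route as the paper's proof: a two-stage Alikakos/Moser $L^p$ iteration that first controls the spatial density via $|\bf{p}|\leq\rho$ and the two-dimensional Gagliardo--Nirenberg inequality, converts this into a uniform-in-time bound on $\|\nabla c\|_{L^\infty}$ by elliptic regularity, and then iterates on $f$ in three dimensions using that only $\nabla c_\lambda$ (never $D^2_\xx c$) appears after a single integration by parts in $\theta$. The only cosmetic difference is that you stop Stage one at a fixed $L^q$ with $q>2$ and use $W^{2,q}(\Omega)\hookrightarrow C^1(\Omega)$, whereas the paper runs the iteration on $\rho$ all the way to $L^\infty$ before invoking Morrey and the $W^{2,p}$ estimate.
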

\begin{proof}
    The proof goes in multiple steps. The crucial element is an $L^p$ iteration method, iterating from $p$ to $p+1$ up to $p=+\infty$. This was first used by Alikakos for reaction-diffusion equations \cite{alikakos1979lp}. The fact that the solutions can be extended in time is already proven in Lemma \ref{lem:exist} and is a result of the $L^2$ estimate.
    
    The iteration method of \cite{alikakos1979lp} gives an upper bound for the quantity $\sup_{t\geq 0}\|u(t)\|_{L^\infty}$ where $u$ is some solution of a time-evolution PDE. We will use the version of Alikakos' method as in Lemma 5.1 of \cite{kowalczyk2005preventing}. We first apply Alikakos' method to $\rho$. Then, we show that this implies certain upper bounds for higher regularity norms of $c$. That is, we demonstrate that $\sup_{t\geq 0}\|\nabla c\|_{L^\infty(\Omega)}< C_c$, for some constant $C_c>0$. This then allows us to apply Alikakos' method to $f$ and thus obtain a bound for $\sup_{t\geq 0}\|f(t)\|_{L^\infty}$.
    
    Multiplying the equation for $\rho$ (\ref{eq:rho1}) by $(p+1)\rho^p$ gives \begin{equation}
        \frac{\dd}{\dd t}\int_\Omega\rho^{p+1}\dd\xx=-p(p+1)\int_\Omega \rho^{p-1}\nabla\rho\cdot[D_T\nabla\rho-\Pe\bf{p}]\dd \xx,
    \end{equation} using integration by parts. Using the identity $\frac{4}{p+1} \left|\nabla\rho^{\frac{p+1}{2}} \right|^2=(p+1)\rho^{p-1}|\nabla \rho|^2$, $|\bf{p}|\leq\rho$ almost everywhere and Young's product inequality we can derive \begin{align}
        \frac{\dd}{\dd t}\int_\Omega\rho^{p+1}\dd\xx\leq -\frac{4p}{p+1}(D_T-\Pe^2\epsilon)\int_\Omega \left|\nabla\rho^{\frac{p+1}{2}} \right|^2\dd\xx+\frac{1}{\epsilon}p(p+1)\int\rho^{p+1}\dd\xx,
    \end{align} for any $\epsilon>0$. We now use the Gagliardo--Nirenberg inequality in dimension two \cite[Chapter 9.C]{brezis2010functional} 
    \begin{equation}
        \|u\|_{L^2(\Omega)}^2\leq C_{GN}\|u\|_{L^1(\Omega)}\|u\|_{H^1(\Omega)},
    \end{equation} for some constant $C_{GN}>0$, to absorb the gradient terms into terms only involving $\rho$. This leads to \begin{equation}\label{ineq:lemma51}
        \frac{\dd}{\dd t}\int_\Omega\rho^{p+1}\dd x\leq -\epsilon_k\int_\Omega\rho^{p+1}\dd \xx+(a_k+\epsilon_k)2^{\beta k}\left[\int_\Omega\rho^{\frac{p+1}{2}}\dd\xx\right]^2,
    \end{equation} where $\epsilon_k=1/2^{qk},a_k=(2^k-1)2^k/\epsilon$ for $q$ sufficiently large and $\epsilon$ is chosen such that $D_T-\Pe^2\epsilon=\frac{1}{2}$. Therefore \cite[Lemma 5.1]{kowalczyk2005preventing} applies to $\rho$ and we get $\sup_{t\geq 0}\|\rho(t)\|_{L^\infty}\leq C_{\rho_\infty}\max\{1,\|\rho_\ini\|_{L^\infty}\}$ for some constant $C_{\rho_\infty}>0$.

    We need to demonstrate two things to apply Alikakos' method to $f$. Namely, a Gagliardo--Nirenberg inequality in dimension three and that the $L^\infty$ upper bound for $\rho$ implies an $L^\infty$ upper bound for $\nabla c$. Indeed, multiplying the equation for $f$ (\ref{eq:fresc}) by $(p+1)f^p$ gives \begin{align}
        \frac{\dd}{\dd t}\int f^{p+1}\dd\xi=&-p(p+1)\int\left[f^{p-1}D_T|\nabla_\xx f|^2+f^{p-1}|\partial_\theta f|^2-\gamma f^p\bf{n}_\theta\cdot\nabla c_\lambda \partial_\theta f\right]\dd\xi.
    \end{align} We see that if $\|\nabla c\|_{L^\infty}$ is uniformly bounded in time we can, using the Gagliardo--Nirenberg inequality to absorb gradient terms, cast this in the form of equation \cref{ineq:lemma51}. 
    
    That is, using similar identities as above, we first obtain
    \begin{equation}
    \begin{split}
        \frac{\dd}{\dd t}\int f^{p+1}\dd\xi=&-\frac{4p}{p+1}(\min\{D_T,1\}-\gamma^2\|\nabla c\|_{L^\infty}^2\epsilon)\int|\nabla_\xi f^{\frac{p+1}{2}}|^2\dd\xi+\\&\frac{1}{\epsilon}p(p+1)\int f^{p+1}\dd\xi,
    \end{split}
    \end{equation} and then application of the Gagliardo--Nirenberg inequality leads to \begin{equation}
        \frac{\dd}{\dd t}\int f^{p+1}\dd\xi\leq -\epsilon_k\int f^{p+1}\dd\xi+(a_k+\epsilon_k)2^{\beta k}\left[\int f^{\frac{p+1}{2}}\dd\xi\right]^2,
    \end{equation}
    with new $\epsilon_k,a_k,q$ and $\epsilon$ for $f$.

    In dimension three, the Sobolev embedding and H\"older $L^p$ interpolation inequality imply \begin{equation}
        \|u\|_{L^2}\leq C_S^{\frac{6}{5}}\|u\|_{L^1}^\frac{4}{5}\|u\|_{H^1}^{\frac{6}{5}},
    \end{equation} for some constant $C_S>0$ from the Sobolev embedding. This inequality will act as the Gagliardo--Nirenberg inequality did for applying the Alikakos' to $\rho$.

    For the $L^\infty$ upper bound of $\nabla c$, we employ Morrey's inequality with H\"older regularity exponent $1-\frac{2}{p}$ and a $W^{2,p}$ estimate for $c$ in terms of $\rho$. Hence, the exponent $p$ has to be larger than $2$ \cite[Theorem 9.11]{gilbarg1977elliptic}. More specifically, we use the chain of inequalities \begin{equation}
        \|\nabla c\|_{L^\infty(\Omega)}\leq C_M\|c\|_{W^{2,p}(\Omega)}\leq C_M C_{W^{2,p}}\|\rho\|_{L^\infty(\Omega)},
    \end{equation} for constants $C_M>0$ and $C_{W^{2,p}}>0$. Hence, Alikakos' method applies to $f$ and we can show that \begin{equation}
        \sup_{t\geq 0}\|f(t)\|_{L^\infty}\leq C_\infty,
    \end{equation} for some constant $C_\infty$ that depends on all the model's constants, $\|f_\ini\|_{L^\infty}$, $C_{GN}$, $C_S$, $C_M$ and $C_{W^{2,p}}$.
\end{proof}
Finally, we show that the $L^\infty$ bound leads to uniqueness of weak solutions.
\begin{corollary}
    Let $f$ be a weak solution to \cref{eq:fcresc} for non-negative $f_\ini\in L^2\cap L^\infty$. Then, it is unique.
\end{corollary}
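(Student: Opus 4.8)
The plan is to reduce uniqueness of the full nonlinear system to a Gr\"onwall estimate on the difference of two solutions, in the spirit of Lemma \ref{lem:fbound}, the new ingredient being that the self-consistent coupling through the chemical field is controlled by the uniform $L^\infty$ bound of the preceding Proposition. Note that the remark following Lemma \ref{lem:fbound} already gives uniqueness when the drift field $v$ is \emph{held fixed}; the point here is that each solution generates its own field, so one must additionally close the loop between $g := f_1 - f_2$ and $h := c_1 - c_2$, where $f_1,f_2$ are two non-negative weak solutions with the same initial datum $f_\ini$ and $c_1,c_2$ are the associated chemical fields. By the no blow-up Proposition, both $f_1$ and $f_2$ are bounded in $L^\infty(0,T;L^\infty)$.

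First I would observe that $h$ solves the screened Poisson equation $-\Delta h + \alpha h = \sigma$ with source $\sigma = \int_0^{2\pi} g \,\dd\theta = \rho_1 - \rho_2$, so that the standard elliptic energy estimate together with $\|\sigma\|_{L^2(\Omega)}\leq \sqrt{2\pi}\,\|g\|_{L^2}$ yields $\|\nabla h\|_{L^2(\Omega)}\leq C_\alpha \|g\|_{L^2}$. Next, writing the equation for $g$ and using $h_\lambda(\xx,\theta)=h(\xx+\lambda\bf{e}_\theta)$, I would split its nonlinear term as
\[
\gamma\,\partial_\theta(\bf{n}_\theta\cdot\nabla c_{1,\lambda}\,f_1) - \gamma\,\partial_\theta(\bf{n}_\theta\cdot\nabla c_{2,\lambda}\,f_2) = \gamma\,\partial_\theta(\bf{n}_\theta\cdot\nabla c_{1,\lambda}\,g) + \gamma\,\partial_\theta(\bf{n}_\theta\cdot\nabla h_\lambda\,f_2).
\]
Testing with $g$, the diffusive terms produce $-D_T\|\nabla_\xx g\|_{L^2}^2 - \|\partial_\theta g\|_{L^2}^2$, the active transport term $\Pe\bf{e}_\theta$ integrates to zero by periodicity, and the first piece of the split is treated exactly as in Lemma \ref{lem:fbound} via \eqref{ineq:lambda4}, since $v_1 = -\gamma\bf{n}_\theta\cdot\nabla c_{1,\lambda}$ has $\|\partial_\theta v_1\|_{L^2_T L^2}<\infty$ (because $f_1\in L^\infty$ forces $c_1$ bounded in $W^{2,p}$, hence $\nabla c_1, D^2_\xx c_1\in L^2_T L^2$).

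The remaining coupling term is the crux. For it I would integrate by parts in $\theta$ to get $-\gamma\int \bf{n}_\theta\cdot\nabla h_\lambda\,f_2\,\partial_\theta g\,\dd\xi$ and estimate, using $\|\nabla h_\lambda\|_{L^2}=\sqrt{2\pi}\,\|\nabla h\|_{L^2(\Omega)}\leq \sqrt{2\pi}\,C_\alpha\|g\|_{L^2}$ and Young's inequality,
\[
\Bigl|\gamma\int \bf{n}_\theta\cdot\nabla h_\lambda\,f_2\,\partial_\theta g\,\dd\xi\Bigr| \leq \gamma\,\|f_2\|_{L^\infty}\,\|\nabla h_\lambda\|_{L^2}\,\|\partial_\theta g\|_{L^2} \leq \delta\,\|\partial_\theta g\|_{L^2}^2 + \frac{C}{\delta}\,\|f_2\|_{L^\infty}^2\,\|g\|_{L^2}^2 .
\]
Choosing $\delta$ small enough to absorb $\delta\|\partial_\theta g\|_{L^2}^2$ into the negative $-\|\partial_\theta g\|_{L^2}^2$ term and collecting contributions, I arrive at $\tfrac{\dd}{\dd t}\|g\|_{L^2}^2 \leq C(t)\,\|g\|_{L^2}^2$ with $\int_0^T C(t)\,\dd t<\infty$. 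Since $g(0)=0$, Gr\"onwall's lemma forces $g\equiv 0$, i.e. $f_1=f_2$.

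The main obstacle is precisely this coupling term, where the nonlinearity genuinely links the two solutions and which cannot be closed by the $L^2(0,T;H^1)$ regularity alone. The essential observation---and the reason uniqueness appears as a corollary of the no blow-up Proposition rather than of Lemma \ref{lem:fbound}---is that the uniform $L^\infty$ bound on $f_2$ lets one control $\bf{n}_\theta\cdot\nabla h_\lambda\,f_2$ in $L^2$ by a constant times $\|\nabla h\|_{L^2}$, which the elliptic estimate converts into $\|g\|_{L^2}$ \emph{without} raising the order of the norm on $g$; the surplus $\partial_\theta g$ is then harmlessly absorbed by diffusion.
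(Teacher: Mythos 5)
Your proposal is correct and follows essentially the same route as the paper: an $L^2$ Gr\"onwall estimate on $f_1-f_2$, closed by the $L^\infty$ bound from the no blow-up proposition, the elliptic estimate $\|\nabla(c_1-c_2)\|_{L^2}\leq C_\alpha\|f_1-f_2\|_{L^2}$, and absorption of the surplus $\partial_\theta(f_1-f_2)$ into the angular diffusion via Young's inequality. The only cosmetic difference is that you split the bilinear term asymmetrically as $b_1 g+(b_1-b_2)f_2$ while the paper uses the symmetrised identity $b_1f_1-b_2f_2=\tfrac12(b_1+b_2)(f_1-f_2)+\tfrac12(b_1-b_2)(f_1+f_2)$; both lead to the same estimate.
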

\begin{proof}
    Uniqueness results from using the $L^\infty$ estimate for an $L^2$ estimate involving the difference between two weak solutions $f_1$ and $f_2$ with the same initial data. That is, 
    \begin{multline}
    	\frac{1}{2}\frac{\dd}{\dd t}\|f_1-f_2\|_{L^2}^2\\=\int \left [-D_T|\nabla_\xx (f_1-f_2)|^2-|\partial_\theta (f_1-f_2)|^2
        +\gamma \partial_\theta(b_1 f_1-b_2 f_2)(f_1-f_2) \right ]\dd \xi.
    \end{multline}
where  $b_i=\bf{n}_\theta\cdot\nabla(c_i)_\lambda$ for $i=1,2$. Using the identity $b_1 f_1-b_2 f_2=\tfrac{1}{2}(b_1+b_2)(f_1-f_2)+\tfrac{1}{2}(b_1-b_2)(f_1+f_2)$ and integration by parts gives 
\begin{multline}
	\frac{1}{2}\frac{\dd}{\dd t}\|f_1-f_2\|_{L^2}^2=\int \Big \{-D_T|\nabla_\xx (f_1-f_2)|^2-|\partial_\theta (f_1-f_2)|^2\\
        +\tfrac{1}{2}\gamma \left[(b_1+b_2)(f_1-f_2)+(b_1-b_2)(f_1+f_2) \right]\partial_\theta(f_1-f_2) \Big \}\dd \xi.
\end{multline}
    Now, we note that we can use the $L^\infty$ estimates for the terms $b_1+b_2$ and $f_1+f_2$ and the estimate \begin{equation}
        \|\nabla (c_1-c_2)\|_{L^2}\leq C_\alpha\|f_1-f_2\|_{L^2},
    \end{equation} so that 
    \begin{equation}
        \begin{aligned}
        \frac{1}{2}\frac{\dd}{\dd t}\|f_1-f_2\|_{L^2}^2\leq&-D_T\|\nabla_\xx (f_1-f_2)\|_{L^2}^2-\|\partial_\theta (f_1-f_2)\|^2_{L^2}\\
        &+\frac{C_{12}}{\varepsilon}\|f_1-f_2\|_{L^2}+\varepsilon C_{12}\|\partial_\theta(f_1-f_2)\|_{L^2}^2,
    \end{aligned}
    \end{equation}
     for some constant $C_{12}>0$ and any $\varepsilon>0$. Hence, by taking $\varepsilon$ small enough we get, by Gr\"onwall's lemma 
     \begin{equation}
        \|f_1(t)-f_2(t)\|_{L^2}^2\leq \e^{C_G t}\|f_1(t=0)-f_2(t=0)\|_{L^2}^2,
    \end{equation} for some $C_G>0$. Since $f_1(t=0)=f_2(t=0)=f_\ini$, we see that $f_1(t)=f_2(t)$ in $L^2$ for all $t\geq 0$.
\end{proof}
\subsection{Conditional uniqueness of the stationary state}\label{sec:uniqstat}
Stationary solutions for equations (\ref{eq:fcresc}) tell us about the possible endstates the solutions may converge to. Considering $D_T,\Pe,\lambda$ and $\alpha$ as fixed parameters, we show that for small enough $\gamma$ the homogeneous state $f_\ast=\frac{1}{2\pi}$ is the only stationary solution. Therefore, non-trivial stationary patterns do not exist in this parameter region.
\begin{proposition}[Conditional uniqueness of the stationary solution]\label{prop:uniqstat}
    Assume $f$ is a non-negative weak stationary solution as in \cref{def:weaksol}. Then, for fixed $D_T,\Pe,\lambda$ and $\alpha$, if $\gamma$ is small enough, it follows that $f$ is unique and hence equal to the homogeneous solution $f_\ast=\frac{1}{2\pi}$.
\end{proposition}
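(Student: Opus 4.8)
The plan is to show directly that any non-negative stationary weak solution coincides with $f_\ast=\frac{1}{2\pi}$; since $f_\ast$ is an explicit solution (one checks immediately that it gives $\rho\equiv 1$, hence $c\equiv 1/\alpha$ and $\nabla_\xx c\equiv 0$, so every flux in the stationary version of \cref{eq:fresc} vanishes), establishing that it is the only one yields both uniqueness and the identification. The backbone is an energy identity: testing the stationary equation with $f$ itself and integrating by parts, the transport term drops out because $\int\bf{e}_\theta\cdot\nabla_\xx(f^2)\dd\xi=0$ by periodicity, leaving
\begin{equation}
D_T\|\nabla_\xx f\|_{L^2}^2+\|\partial_\theta f\|_{L^2}^2=\gamma\int(\bf{n}_\theta\cdot\nabla_\xx c_\lambda)\,f\,\partial_\theta f\dd\xi.
\end{equation}
Writing $N:=\|\nabla_\xi f\|_{L^2}$ with $\nabla_\xi=(\nabla_\xx,\partial_\theta)$, the left-hand side is bounded below by $c_0N^2$ with $c_0=\min\{D_T,1\}$, so the whole question reduces to absorbing the right-hand side for small $\gamma$.

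The key structural observation is that $\nabla_\xx c$ is controlled by the \emph{deviation} of $\rho$ from its constant mean. Decomposing $\rho=1+(\rho-1)$ and $c=1/\alpha+\tilde c$ with $-\Delta\tilde c+\alpha\tilde c=\rho-1$, a global $W^{2,p}$ estimate together with Morrey's inequality (for $p>2$ in the two-dimensional spatial domain) and the Poincar\'e inequality (using $\int_\Omega(\rho-1)\dd\xx=0$) give
\begin{equation}
\|\nabla_\xx c\|_{L^\infty(\Omega)}\le C\|\rho-1\|_{L^p(\Omega)}\le C'\|\nabla_\xx\rho\|_{L^2(\Omega)}\le C''\|\nabla_\xx f\|_{L^2}\le C''N,
\end{equation}
where the penultimate step is Cauchy--Schwarz in $\theta$. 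Hence the right-hand side of the energy identity is at most $\gamma C''N\|f\|_{L^2}\|\partial_\theta f\|_{L^2}\le\gamma C''N^2\|f\|_{L^2}$, and if $\|f\|_{L^2}$ is a priori bounded by a constant $C_f$ independent of the particular solution, we obtain $c_0N^2\le\gamma C''C_fN^2$, forcing $N=0$—and thus $f$ constant, i.e.\ $f=f_\ast$—as soon as $\gamma<c_0/(C''C_f)$.

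It remains to produce the a priori bound $\|f\|_{L^2}\le C_f$, and this is where the main difficulty lies: the energy identity alone yields only a cubic inequality $c_0N^2\lesssim\gamma(N^2+N^3)$ (after inserting $\|f\|_{L^2}\le(2\pi)^{-1/2}+C_PN$ from Poincar\'e), which gives a lower bound on $N$ rather than the needed upper bound and so cannot exclude solutions of large norm. To break this circularity I would invoke the $L^p$-iteration of Alikakos used in the no-blow-up proposition, specialized to the stationary setting: with $\partial_t\equiv 0$ the iteration inequality \cref{ineq:lemma51} for $\rho$ becomes a pure recursion $\int\rho^{p+1}\lesssim 2^{\beta k}[\int\rho^{(p+1)/2}]^2$ whose fixed point bounds $\|\rho\|_{L^\infty}$ in terms of the mass $\|\rho\|_{L^1}=1$ alone, so the bound is universal rather than depending on $\|f_\ini\|_{L^\infty}$. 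Elliptic regularity then upgrades this to $\|\nabla_\xx c\|_{L^\infty}\le C_c$, which legitimizes using $f$ as a test function and, fed back into the energy identity (this time bounding $\|\nabla_\xx c\|_{L^\infty}$ by the constant $C_c$ and invoking Poincar\'e), yields the desired $\|f\|_{L^2}\le C_f$. Verifying carefully that the stationary Alikakos recursion closes with a mass-only constant is the crux; once it is in hand, the two uses of the energy identity—first to bound $\|f\|_{L^2}$, then with the sharper estimate $\|\nabla_\xx c\|_{L^\infty}\lesssim N$ to force $N=0$—complete the proof.
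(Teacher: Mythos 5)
Your proposal is correct, and it takes a genuinely different route from the paper's proof precisely in the step where the real difficulty lies. The paper argues by contradiction on a \emph{pair} of stationary solutions $f_1,f_2$: it tests the difference of their equations with $f_1-f_2$ and, after establishing solution-independent bounds on $\|f_i\|_{L^\infty}$ and $\|b_i\|_{L^\infty}$, reaches
\begin{equation}
\min\{D_T,1\}\,\|\nabla_\xi(f_1-f_2)\|_{L^2}^2\le \tfrac12\,\gamma F_\infty C'\,\|\nabla_\xi(f_1-f_2)\|_{L^2}^2,
\end{equation}
which is impossible for small $\gamma$ unless $f_1=f_2$; crucially, its a priori control of the size of a solution comes from $L^1$ elliptic regularity theory ($\|f\|_{W^{1,3/2-\epsilon}}\le C_{L^1}\|f\|_{L^1}$, cited from V\'eron) combined with H\"older interpolation and $H^2$ estimates. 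You instead work with a single solution, exploit the structural fact that $\nabla_\xx c$ is controlled \emph{linearly} by the deviation of $\rho$ from its mean ($\|\nabla_\xx c\|_{L^\infty}\lesssim\|\nabla_\xi f\|_{L^2}$ via $W^{2,p}$, Morrey and Poincar\'e--Sobolev, all legitimate on the 2D torus), and obtain the needed solution-independent bound by running the Alikakos/Moser iteration of the no-blow-up proposition in stationary form: with $\partial_t\equiv 0$, \cref{ineq:lemma51} degenerates to $\epsilon_k\int_\Omega\rho^{p+1}\dd\xx\le(a_k+\epsilon_k)2^{\beta k}\left[\int_\Omega\rho^{(p+1)/2}\dd\xx\right]^2$, a pure recursion whose constants grow only geometrically in $k$, so the telescoping product converges and $\|\rho\|_{L^\infty}\le C\|\rho\|_{L^1}=C$ with a mass-only constant; your identification of this as the crux is accurate, and the recursion does close. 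You also correctly diagnose why the energy identity alone cannot finish the argument: inserting Poincar\'e gives a cubic inequality that only yields a \emph{lower} bound on $\|\nabla_\xi f\|_{L^2}$ for non-homogeneous solutions, so an independent a priori bound is unavoidable. As for what each route buys: yours is more self-contained (it reuses machinery already present in the paper instead of invoking $L^1$ regularity theory) and identifies the solution with $f_\ast$ directly, since the linear bound on $\nabla_\xx c$ lets the dissipation absorb the nonlinearity and force $\nabla_\xi f=0$ exactly rather than merely small; the paper's pairwise argument, by contrast, does not rely on the reference state having vanishing chemical gradient. The remaining informalities in your write-up (justifying $f$, respectively powers of $\rho$, as test functions via truncation) are at the same level of rigor as the paper's own proof.
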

\begin{proof}
    The main idea of the proof is to show a contradiction if $f_1$ and $f_2$ are two distinct solutions to the stationary problem. To arrive there, we employ Sobolev embeddings, H\"older inequalities, the $W^{2,p}$ inequality used earlier and $L^1$ elliptic regularity theory. The main inequality that will lead to a contradiction, if $\gamma$ is small enough, is the $L^2$ estimate for the difference $f_1-f_2$. That is, assuming $f_1$ and $f_2$ are two distinct non-negative solutions of  the stationary equation \begin{equation}\label{eq:stat}
        0=D_T\Delta_\xx f_i-\Pe\bf{e}_\theta\cdot\nabla_\xx f_i+\partial_\theta^2 f_i-\gamma\partial_\theta(\bf{n}_\theta\cdot\nabla_\xx(c_i)_\lambda f_i),
    \end{equation} we subtract their two equations and multiply by $f_1-f_2$ to get, using integration by parts, 
    \begin{equation}
        \begin{aligned}
        &D_T\|\nabla_\xx(f_1-f_2)\|_{L^2}^2+\|\partial_\theta(f_1-f_2)\|_{L^2}^2=-\gamma\int\partial_\theta(b_1f_1-b_2 f_2)(f_1-f_2)\dd\xi\\
        &=\gamma\int\tfrac{1}{2}\left[(b_1-b_2)(f_1+f_2)+(f_1-f_2)(b_1+b_2)\right]\partial_\theta(f_1-f_2)\dd\xi,\label{ineq:f1f2}
    \end{aligned}
    \end{equation}
where again we write $b_i=\bf{n}_\theta\cdot\nabla (c_i)_\lambda$. To show that this equality gives a contradiction for small enough $\gamma$, we establish upper bounds for $\|f_i\|_{L^\infty}$ and $\|b_i\|_{L^\infty}$. These upper bounds allow us to turn (\ref{ineq:f1f2}) into, by using Young's product inequality and Poincar\'e's inequality for $f_1-f_2$, 
     \begin{equation}
        \min\{D_T,1\}\|\nabla_\xi(f_1-f_2)\|_{L^2}^2\leq\frac{1}{2}\gamma F_\infty C'\|\nabla_\xi(f_1-f_2)\|_{L^2}^2,\label{ineq:f1f22}
    \end{equation}
for some constants $F_\infty, C'>0$. The constants $F_\infty$ and $C'$ depend on $\gamma$ (and other parameters we consider fixed), and they can be made arbitrarily small by making $\gamma$ smaller (they are non-decreasing in $\gamma$). Hence, if $\|\nabla_\xi(f_1-f_2)\|_{L^2}>0$ ($f_1$ and $f_2$ are distinct) and $\gamma$ is small enough then (\ref{ineq:f1f22}) gives a contradiction and so $f_1$ must be equal to $f_2$.

We finish the proof by showing that $\|f_i\|_{L^\infty}$ and $\|b_i\|_{L^\infty}$ can be bounded from above. To do this, we first show that $\|\rho\|_{L^\infty}$ (dropping the $i$ index for the moment) can be bounded from above. Using the identity $D_T\Delta\rho=\Pe\nabla\cdot\bf{p}$ and the Sobolev embedding for H\"older regularity, we arrive at
\begin{equation}
        \|\rho\|_{L^\infty(\Omega)}\leq C_{H,2}\|\rho\|_{H^2(\Omega)}\leq C_{H,2} \left(2\frac{\Pe}{D_T}+1\right)\|f\|_{H^1},
\end{equation} 
for some constant $C_{H,2}>0$.

Multiplying equation (\ref{eq:stat}) by $f$ and standard estimates for the Poisson equation for $c$ lead to \begin{equation}
    \min\{D_T,1\}\int|\nabla_\xi f|^2\dd\xi\leq \tfrac{1}{2}\gamma\int |\partial_\theta b|f^2\dd\xi\leq 2\gamma\|\rho\|_{L^2}\|f\|_{L^4}^2.
\end{equation} 
Then, by applying a Sobolev embedding, H\"older inequalities and Young's inequalities we can conclude \begin{equation}\label{ineq:fh1}
        \min\{D_T,1\}\|\nabla_\xi f\|_{L^2}^2\leq C_\gamma (\|f\|_{L^2}^6+\|f\|_{L^2}^2),
\end{equation} 
for a constant $C_\gamma$ that depends non-decreasingly on $\gamma$.

Next, we use the identity $D_T\Delta_\xx f+\partial_\theta^2 f=\Pe\bf{e}_\theta\cdot\nabla_\xx f+\gamma\partial_\theta(b f)$ and evaluate both sides in $L^2$ norm. The right-hand side can be upper bounded in terms of $\|f\|_{L^2}$ using (\ref{ineq:fh1}) and the fact that $\|D^2 c\|_{L^4}$ can be bounded from above by terms depending on $\|f\|_{L^2}$. The latter follows by applying the $W^{2,p}$ estimate from \cite[Theorem 9.11]{gilbarg1977elliptic} with $p=4$ and that $\|f\|_{L^4}$ is controlled by $\|f\|_{L^2}$ via a Sobolev embedding and by inequality (\ref{ineq:fh1}). Then, using the Sobolev embedding for H\"older regularity for $f$ gives 
\begin{equation}
        \|f\|_{L^\infty}\leq C_{H,3}\|f\|_{H^2}\leq G(\gamma,\|f\|_{L^2}),
\end{equation} 
for some constant $C_{H,3}>0$ and some non-decreasing function $G$ that captures all the nonlinearities in the inequalities above.

    Lastly, we show that $\|f\|_{L^2}$ can be bounded from above by $\|f\|_{L^1}=1$ using a $W^{1,p}$ Sobolev embedding and $L^1$ elliptic regularity theory. We have \cite[Corollary 2.8]{veron2008elliptic}
    \begin{equation}
        \|f\|_{W^{1,\frac{3}{2}-\epsilon}}\leq C_{L^1}\|f\|_{L^1},
    \end{equation} for some small $\epsilon>0$ and a constant $C_{L^1}>0$. Using H\"older interpolation for $L^2$ in terms of $L^1$ and $L^3$ then gives that $\|f\|_{L^2}$ can be bounded from above depending on $C_{L^1}$ and $\|f\|_{L^1}$. We note that, by definition, we have $\|f\|_{L^1}=1$. This finishes the proof.
\end{proof}
\subsection{Conditional nonlinear stability of the homogeneous state}\label{sec:nonlinstab}
We consider the stability of the homogeneous state $f_\ast$ by studying the equation for a perturbation $\tilde{f}$, such that $f=f_\ast+\tilde{f}$ is the full solution. Analogously, we write $\rho = \rho_\ast + \tilde \rho$ and $c = c_\ast + \tilde c$. Note that $f_\ast = 1/(2\pi)$, $\rho_\ast = 1$ and $c_\ast = 1/\alpha$. The equation for the perturbation $\tilde{f}$ is
\begin{equation} \label{eq_pert}
    \partial_t \tilde{f}=L\tilde{f}-\gamma\partial_\theta(\bf{n}_\theta\cdot\nabla_\xx \tilde{c}_\lambda \tilde{f}),
\end{equation} where $L$ is the linear operator \begin{equation}\label{eq:linop}
    L\tilde{f}=D_T\Delta_\xx \tilde{f}-\Pe\bf{e}_\theta\cdot\nabla_\xx \tilde{f}+\partial_\theta \tilde{f}+f_\ast\gamma\partial_\theta(\bf{n}_\theta\cdot \nabla_\xx \tilde{c}_\lambda),
\end{equation} 
and the equation for $\tilde c$ is $0=\Delta \tilde{c}-\alpha \tilde{c}+\tilde{\rho}$.

In this section, we show that the equations are nonlinearly stable around the homogeneous state $f_\ast$ in a subset of the stable linear regime of the linear operator $L$. That is, we assume the linear operator $L$ is stable in the sense that it satisfies an operator decay estimate \begin{equation}\label{eq:decest}
    \|\e^{tL}\|_{L^2}\leq \e^{-\beta t},
\end{equation} for some $\beta>0$. Here $\e^{tL}$ is the semigroup generated by the operator $L$. For our operator, this is equivalent to the eigenvalue $\sigma$ of $L$ with the largest real part being strictly negative, $\Re(\sigma)<0$. In this case, $\beta=-\Re(\sigma)$ \cite{kato2013perturbation,renardy2004introduction}. The space in which we show decay is the space $H^2_\xx L^2_\theta$ which is defined as \begin{equation}
    H^2_\xx L^2_\theta=\{f\in L^2:\|f\|_{L^2}+\|\nabla_\xx f\|_{L^2}+\|D^2_\xx f\|_{L^2}<+\infty\}.
\end{equation}
\begin{proposition}[Nonlinear stability of small perturbations around the homogeneous state in the linear stable region and for small $\gamma$]\label{prop:nonlinstab}
    Let $\tilde{f}$ be the perturbation such that the full solution is $f=f_\ast+\tilde{f}$, where $f$ is a weak solution as in Definition \ref{def:weaksol}. Suppose $f(t)\in H^2_\xx L^2_\theta$ for all $t\geq 0$ and assume that the operator $L$ (\ref{eq:linop}) satisfies a decay estimate (\ref{eq:decest}) for some $\beta>0$. Then, for any $\delta_0<\beta$, if $\|\tilde{f}_\ini\|_{H^2_\xx L^2_\theta}$ is small enough and $\gamma<2f_\ast\min\{D_T,1\}/\alpha$, the perturbation decays $\|\tilde{f}(t)\|_{H^2_\xx L^2_\theta}\leq \tilde{C} \e^{-\delta_0 t}$ and the constant $\tilde{C}>0$ depends on the model constants, $\delta_0$ and $\|\tilde{f}_\ini\|_{H^2_\xx L^2_\theta}$.
\end{proposition}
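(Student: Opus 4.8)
My plan is to run an energy estimate for the perturbation equation \eqref{eq_pert} in the norm $H^2_\xx L^2_\theta$, using the assumed decay \eqref{eq:decest} to furnish the sharp exponential rate and the smallness of $\gamma$ to furnish enough diffusive dissipation to absorb the quadratic, derivative-losing nonlinearity. The key structural observation is that $N(\tilde f)=-\gamma\partial_\theta(\bf{n}_\theta\cdot\nabla_\xx\tilde c_\lambda\,\tilde f)$ is genuinely quadratic in $\tilde f$: the chemical $\tilde c$ solves $-\Delta\tilde c+\alpha\tilde c=\tilde\rho$ with $\tilde\rho=\int\tilde f\,\dd\theta$, so $\nabla_\xx\tilde c_\lambda$ is linear in $\tilde f$ and, by elliptic regularity, gains two spatial derivatives over $\tilde\rho$.

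First I would extract two complementary consequences of the hypotheses. From $\|\e^{tL}\|_{L^2}\leq\e^{-\beta t}$, a contraction-type bound, differentiating $\|\e^{tL}g\|_{L^2}^2\leq\e^{-2\beta t}\|g\|_{L^2}^2$ at $t=0$ yields $\Re\langle Lg,g\rangle\leq-\beta\|g\|_{L^2}^2$ for every $g$ in the form domain. Separately, a direct computation of the Dirichlet form gives $\Re\langle Lg,g\rangle=-D_T\|\nabla_\xx g\|_{L^2}^2-\|\partial_\theta g\|_{L^2}^2+f_\ast\gamma\langle\partial_\theta g,\bf{n}_\theta\cdot\nabla_\xx\tilde c_\lambda[g]\rangle$, the transport term having vanishing real part by periodicity. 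Estimating the cross term with the elliptic bound $\|\nabla_\xx\tilde c\|_{L^2}\leq\alpha^{-1/2}\|\tilde\rho\|_{L^2}$ and Young's inequality, the hypothesis $\gamma<2f_\ast\min\{D_T,1\}/\alpha$ is exactly what keeps an explicit negative multiple of $\|\partial_\theta g\|_{L^2}^2+\|\nabla_\xx g\|_{L^2}^2$. Taking a convex combination of the two bounds produces, for any prescribed $\delta_0<\beta$, an inequality $\Re\langle Lg,g\rangle\leq-\delta_0\|g\|_{L^2}^2-\nu\|\partial_\theta g\|_{L^2}^2$ with some $\nu>0$: the first term delivers the sharp rate, the second the $\theta$-dissipation used to control $N$.

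Next, since $f_\ast$ and $c_\ast$ are constant, all coefficients of $L$ are independent of $\xx$ and both the elliptic solve and the shift $\tilde c_\lambda(\cdot,\theta)=\tilde c(\cdot+\lambda\bf{e}_\theta)$ commute with spatial translations; hence $L$ commutes with $\partial_{x_i}$ and $\partial_{x_i}\partial_{x_j}$, and each $g\in\{\tilde f,\nabla_\xx\tilde f,D_\xx^2\tilde f\}$ solves $\partial_t g=Lg+\partial_\xx^k N(\tilde f)$. Testing each with $g$, summing, and inserting the inequality from the previous step gives
\begin{equation*}
\tfrac12\tfrac{\dd}{\dd t}\|\tilde f\|_{H^2_\xx L^2_\theta}^2\leq-\delta_0\|\tilde f\|_{H^2_\xx L^2_\theta}^2-\nu\|\partial_\theta\tilde f\|_{H^2_\xx L^2_\theta}^2+\sum_k\Re\langle\partial_\xx^k\tilde f,\partial_\xx^k N(\tilde f)\rangle.
\end{equation*}
After moving the $\partial_\theta$ in $N$ onto the factor $\partial_\theta\partial_\xx^k\tilde f$ by integration by parts, I would bound the nonlinear sum by $C\gamma\|\partial_\theta\tilde f\|_{H^2_\xx L^2_\theta}\|\tilde f\|_{H^2_\xx L^2_\theta}^2$, using that $H^2(\T^2)$ is a Banach algebra embedding in $L^\infty$ in two spatial dimensions and the elliptic gain $\|\nabla_\xx\tilde c_\lambda\|_{H^2_\xx}\lesssim\|\tilde\rho\|_{H^2_\xx}\lesssim\|\tilde f\|_{H^2_\xx L^2_\theta}$. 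Young's inequality then absorbs $\|\partial_\theta\tilde f\|_{H^2_\xx L^2_\theta}$ into the $-\nu$-dissipation, leaving a quartic remainder and the closed inequality $\frac{\dd}{\dd t}\|\tilde f\|_{H^2_\xx L^2_\theta}^2\leq-2\delta_0\|\tilde f\|_{H^2_\xx L^2_\theta}^2+C\|\tilde f\|_{H^2_\xx L^2_\theta}^4$.

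A continuity/bootstrap argument then closes the estimate: if $\|\tilde f_\ini\|_{H^2_\xx L^2_\theta}$ is small enough that $C\|\tilde f\|_{H^2_\xx L^2_\theta}^2<\delta_0$ is never violated, the quartic term cannot overtake the dissipation and $\|\tilde f(t)\|_{H^2_\xx L^2_\theta}^2\leq\e^{-2\delta_0 t}\|\tilde f_\ini\|_{H^2_\xx L^2_\theta}^2$, which is the assertion with $\tilde C$ proportional to $\|\tilde f_\ini\|_{H^2_\xx L^2_\theta}$. I expect the main obstacle to be the first step: because $L$ is non-self-adjoint, neither ingredient alone suffices, since the diffusive Dirichlet form gives only the coercivity rate (generally below $\beta$) while the abstract decay bound gives no usable $\theta$-dissipation for the derivative-losing nonlinearity. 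Reconciling the two—so that the sharp rate $\delta_0<\beta$ and an explicit $\|\partial_\theta\cdot\|$-dissipation hold simultaneously—is the delicate point, and it is precisely where both hypotheses, $\beta$-decay and $\gamma$ small, are genuinely used.
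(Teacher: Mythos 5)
Your argument is correct in outline and reaches the result by a genuinely different route from the paper. The paper works from the Duhamel formula for \eqref{eq_pert} and runs a bootstrap in the time-weighted norm $\sup_{t\le T}\e^{\delta_0 t}\|\tilde f(t)\|_{H^2_\xx L^2_\theta}$, controlling the derivative-losing nonlinearity by duality: the operator $g\mapsto\int_0^T\e^{(T-s)L}\partial_\theta g\,\dd s$ is estimated through its adjoint $-\partial_\theta \e^{(T-s)L^\ast}$, whose norm is bounded by combining the decay of $\e^{(T-s)L^\ast}$ with an integrated dissipation estimate for the adjoint flow (their \eqref{eq:energy1}--\eqref{ineq:Tast}); that is where $\gamma<2f_\ast\min\{D_T,1\}/\alpha$ enters. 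You instead close a pointwise-in-time differential inequality. Note that your two ingredients are in fact the same two: since $\Re\langle L^\ast h,h\rangle=\Re\langle Lh,h\rangle$, your Dirichlet-form computation is precisely the paper's \eqref{eq:energy1}, and your bound $\Re\langle Lg,g\rangle\le-\beta\|g\|_{L^2}^2$ is the Lumer--Phillips reformulation of \eqref{eq:decest}. The genuinely new step is the convex combination of the two quadratic-form bounds, which retains simultaneously the rate $\delta_0<\beta$ and a reserve of $\theta$-dissipation; that step is valid, and the commutation of $L$ with spatial derivatives needed to lift it to $H^2_\xx L^2_\theta$ holds because all coefficients and the elliptic solve are translation-invariant (the paper uses the same fact via Fourier multipliers). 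What your route buys is a shorter, more elementary proof with no weighted dual spaces; what it costs is robustness: you use essentially that \eqref{eq:decest} holds with constant exactly $1$, so that the numerical range of $L$ lies in $\{\Re z\le-\beta\}$. The paper identifies $\beta$ with the spectral abscissa, and for this non-normal operator one generically only gets $\|\e^{tL}\|\le M\e^{-\beta t}$ with $M>1$, under which differentiating $\|\e^{tL}g\|^2\le M^2\e^{-2\beta t}\|g\|^2$ at $t=0$ gives nothing, while the Duhamel route survives with $M$ absorbed into constants. Two cosmetic points: the threshold on $\gamma$ that your Dirichlet-form estimate produces (routing $\|\nabla_\xx\tilde c\|$ through the commuting resolvent to $\|\nabla_\xx g\|$) will not match $2f_\ast\min\{D_T,1\}/\alpha$ exactly, though the paper's own constants there are not internally consistent either; and the closed inequality $\frac{\dd}{\dd t}y\le-2\delta_0 y+Cy^2$ delivers the rate $\e^{-2\delta_0 t}$ only after running the argument with some $\delta_0'\in(\delta_0,\beta)$ and absorbing the quartic term into the margin $\delta_0'-\delta_0$.
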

\begin{proof}
    The idea for the proof is similar to \cite[Theorem 1.3]{albritton2022stabilizing}. We drop the tilde for $\tilde{f}$. Using the Duhamel formulation \begin{equation}
        \tilde{f}(t)=\e^{tL}\tilde{f}(t=0)-\gamma\int_0^t\e^{(t-s)L}\partial_\theta((\bf{n}_\theta\cdot\nabla_\xx \tilde{c}_\lambda)\tilde{f})\dd s,
    \end{equation} we show a bootstrap argument with the norm \begin{equation}
        \|f\|_{X_T}=\sup_{t\in[0,T]}\e^{\delta_0 t}\|f(t)\|_{H^2_\xx L^2_\theta},
    \end{equation} for some $\delta_0$ to be determined.
    
    To do so, we show a duality argument for the nonlinear term to turn an estimate on the operator $\e^{(T-s)L}\partial_\theta$ into one on $-\partial_\theta \e^{(T-s)L^\ast}$. That is, we consider the operator \begin{equation}
        \textsf{T}:L^2_w((0,T)\times\Sigma)\to L^2(\Sigma), \quad g\mapsto\int_0^T\e^{(T-s)L}\partial_\theta g(s)\dd s,
    \end{equation} where we use the space $L^2_w((0,T)\times\Sigma)$ with norm \begin{equation}
        \|g\|_{L^2_w}^2=\int_0^T w(t)^2\|g(t)\|_{L^2}^2\dd t,
    \end{equation} for some integrable function $w:(0,T)\to\R$, called the weight function. We set the weight function to be $\e^{\beta s}$. The dual of $\textsf{T}$ is \begin{equation}
        \textsf{T}^\ast:L^2(\Sigma)\to L^2_{w^{-1}}((0,T)\times\Sigma),\quad h\mapsto-\partial_\theta \e^{(T-s)L^\ast}f,
    \end{equation} where \begin{equation}\label{eq:Last}
        L^\ast f=D_T\nabla_\xx f+\Pe\bf{e}_\theta\cdot\nabla_\xx f+\partial_\theta^2 f+f_\ast\gamma\nabla_\xx\cdot \text{I} \textsf{S}(\bf{n}_\theta \textsf{H}_{-\lambda}\partial_\theta f),
    \end{equation} and we defined the operators \begin{align}
        &\text{I}:L^2(\Sigma)\to L^2(\Sigma),f\mapsto((\xx,\theta)\mapsto\int_0^{2\pi} f(\xx,\theta')\dd\theta'), \\ &\textsf{S}:L^2(\Sigma)^2\to L^2(\Sigma)^2,\bf{f}\mapsto(\alpha\Id-\Delta_\xx)^{-1}\bf{f},\\
        &\textsf{H}_{\lambda}:L^2(\Sigma)\to L^2(\Sigma),f\mapsto f(\xx+\lambda\bf{e}_\theta,\theta).
    \end{align}
    The term $-\e^{(T-s)L^\ast} h$ is a solution to the PDE problem \begin{equation}
        \partial_s f=L^\ast f, \quad f(T)=h.
    \end{equation}
    Hence, multiplying \cref{eq:Last} by $f$ gives \begin{equation}\label{eq:energy1}
        \frac{1}{2}\frac{\dd}{\dd s}\|f(s)\|_{L^2}^2=-D_T\|\nabla_\xx f\|_{L^2}^2-\|\partial_\theta f\|_{L^2}^2-f_\ast\gamma\langle \text{I} \textsf{S}(\bf{n}_\theta \textsf{H}_{-\lambda}\partial_\theta f),\nabla_\xx f\rangle.
    \end{equation}
    By making use of $\|\text{I}\|=\|\textsf{H}_\lambda\|=1$ and $\|\textsf{S} \bf{f}\|_{L^2}\leq\alpha^{1/2}\|\bf{f}\|_{L^2}$ we get \begin{align}
        &\frac{1}{2}\frac{\dd}{\dd s}(\e^{-2\beta s}\|f(s)\|_{L^2}^2)\leq -\e^{-2\beta s}(\min\{D_T,1\}-\tfrac{1}{2}f_\ast\gamma\alpha)\|\nabla_\xi f(s)\|_{L^2}^2,\\
        &(2\min\{D_T,1\}-f_\ast\gamma\alpha)\int_0^T\e^{-2\beta s}\|\nabla_\xi f(s)\|_{L^2}^2\leq \e^{-2\beta T}\|h\|_{L^2}^2,
    \end{align} where we also employed the dual decay estimate $\|\e^{(T-s)L^\ast}h\|_{L^2}\leq \e^{-\beta(T-s)}\|h\|_{L^2}$. Therefore, by using duality,  \begin{equation}\label{ineq:Tast}\|\textsf{T}\|=\|\textsf{T}^\ast\|\leq\frac{\e^{-\beta T}}{\sqrt{(2\min\{D_T,1\}-f_\ast\gamma\alpha)}},\end{equation}
and we see that we require $\alpha f_\ast\gamma<2\min\{D_T,1\}$.

Now, we set up the bootstrap argument by assuming \begin{equation}
    \|f\|_{X_T}\leq C_0\epsilon,
\end{equation} and we want to show that this implies $\|f\|_{X_T}\leq\frac{1}{2}C_0\epsilon$ for certain $\delta_0,C_0$ and $\epsilon>0$, so that then $T$ can be extended to any $T>0$ and $\sup_{t\geq 0}\e^{\delta_0 t}\|f(t)\|_{H^2_\xx L^2_\theta}<C_0\varepsilon$, concluding the proof. The duality argument gives \begin{equation}
    \left\|\int_0^T\e^{(T-s)L}\partial_\theta (\bf{n}_\theta\cdot\nabla_\xx c_\lambda f)\dd s\right\|_{H^2_\xx L^2_\theta}^2\leq \frac{\e^{-2\delta_0 T}}{(2\min\{D_T,1\}-f_\ast\gamma\alpha)2(\beta-\delta_0)}C_0^4\epsilon^4,
\end{equation} using that the inequality \cref{ineq:Tast} generalises to $H^2_\xx L^2_\theta$ by using Fourier multipliers in $\xx$ and that $H^2_\xx$ is an algebra. Therefore, choosing any $\delta_0<\beta$ we get \begin{equation}
    \epsilon+\frac{f_\ast\gamma C_0^2\epsilon^2}{\sqrt{(2\min\{D_T,1\}-f_\ast\gamma\alpha)2(\beta-\delta_0)}}\leq\frac{1}{2}C_0\epsilon,
\end{equation} for the bootstrap argument to work. For example, for $C_0\geq 2$, this inequality is satisfied for small enough $\epsilon$. This concludes the proof.
\end{proof}
We note that energy estimate (\ref{eq:energy1}) is unfavourable and obstructs the proof extending to the entire linearly stable region. The operator $L$ is not self-adjoint and does not enjoy a more favourable energy estimate as in \cite{albritton2022stabilizing}. We note that the requirement that $\gamma$ is small enough for fixed other parameters is somewhat similar to the requirement for Proposition \ref{prop:uniqstat}, but depending on other constants coming from regularity estimates.

\section{Numerical results}

In this section, we study the emergence of non-trivial solutions of the PDE model \cref{eq:fcresc} by means of a linear stability analysis and time-dependent simulations. The numerical codes used throughout this section are available freely at \url{https://github.com/odewit8/ants_repo}.

\subsection{Linear stability of the homogeneous state}\label{sec:linstab}

We neglect the higher-order terms in \cref{eq_pert} and consider the linear problem for a perturbation $\tilde f$ around the homogeneous state $f_\ast$, $\partial_t \tilde f = L \tilde f$, where recall that the linear operator $L$ \cref{eq:linop} depends on $\tilde c_\lambda = \tilde c(t, \xx + \lambda \bf{e}_\theta)$. 
By linearising $L$ up to order one in $\lambda$, $L = L_\lambda + O(\lambda^2)$, we can resolve the spectrum of $L_\lambda$. This allows us to gain insight into the spectrum of $L$ itself and indicate where we expect a decay estimate \cref{eq:decest} to hold. Linearising \cref{eq:linop} in $\lambda$ gives the operator 
\begin{equation}
    L_\lambda \tilde f=D_T\Delta_\xx \tilde f-\Pe\bf{e}_\theta\cdot\nabla_\xx \tilde f+\partial_\theta^2 \tilde f+f_\ast\gamma\bf{e}_\theta\cdot\nabla_\xx \tilde c-\lambda f_\ast\gamma\begin{bmatrix}
        \partial_y^2 \tilde c-\partial_x^2 \tilde c\\-2\partial_{xy} \tilde c
    \end{bmatrix}\cdot\bf{e}_{2\theta}.
\end{equation}
We note that the spectrum of $L_\lambda$ (and $L$) consists entirely of eigenvalues because they have compact resolvent \cite{kato2013perturbation}.

The linear stability analysis considers solutions $\tilde f \sim e^{\sigma t}$, leading to the eigenvalue problem $L_\lambda \tilde f=\sigma \tilde f$.  To find the eigenvalue $\sigma$ with the largest real part, it suffices to consider a wave along one axis at the lowest non-trivial frequency. That is, it suffices to consider eigenfunctions of the form
\begin{equation}\label{ansatz}
    \tilde{f}=\e^{\sigma t+\ii\omega x}\sum_{k\geq 0}A_k\cos(k\theta)
\end{equation}
where $\omega = 2\pi$ and $A_k\in \mathbb C$. 

We are interested in the parameter region in the $\Pe$-$\gamma$ plane for which $\Re(\sigma)=0$, demarcating the transition from the linearly stable region to the linearly unstable region, also called the linear instability line. From \eqref{ansatz} we have that $\rho = 2\pi A_0\e^{\sigma t+\ii\omega x}$ and, using \eqref{eq:cresc},  the chemical field can be written as $c(t,x)=\frac{2\pi A_0}{\omega^2+\alpha}\e^{\sigma t +\ii\omega x}$. Then, the eigenvalue problem is reduced to
\begin{subequations}
	\label{eigprob}
\begin{align}
    \sigma A_0&=-\omega^2 D_T A_0-\tfrac{1}{2}\ii\omega \Pe A_1,\\
    \sigma A_1&=-\omega^2 D_T A_1- A_1-\ii\omega \Pe A_0-\tfrac{1}{2}\ii\omega \Pe A_2+\ii \gamma\omega \frac{A_0}{\omega^2+\alpha},\\
    \sigma A_2&=-\omega^2 D_T A_2-4 A_2-\tfrac{1}{2}\ii\omega \Pe(A_1+A_3)-\lambda \gamma\omega^2\frac{A_0}{\omega^2+\alpha},\\
     \sigma A_k &=-\omega^2 D_T A_k-k^2 A_k-\tfrac{1}{2}\ii\omega \Pe(A_{k-1}+A_{k+1}),
\end{align}
\end{subequations}
for $k\geq 3$,
where we have used $f_* = 1/2\pi$ and
\begin{equation*}
    2\cos(\theta)\cos(k\theta)=\cos((k+1)\theta)+\cos((k-1)\theta),
\end{equation*} for any integer $k$. 

We now follow a procedure similar to \cite{10.1098/rspa.2023.0524} to obtain a dispersion relation $\Re(\sigma)=0$ in terms of $\gamma$ and $\Pe$. 
Specifically, we rewrite the recurrence \eqref{eigprob}  in terms of a countable banded matrix $M$, that is, 
\begin{equation}\label{eq:evproblem}
    \begin{pmatrix}
        a & b & 0 & 0 & \hdots\\
        iC_1+2b & a-1 & b & 0 & \hdots\\
        -\lambda\omega C_1 & b & a-4 & b  & \hdots\\
        0 & 0 & b & a-9  & \hdots\\
        \vdots & \vdots & \vdots &\vdots  & \ddots
    \end{pmatrix}\begin{pmatrix}
        A_0\\ A_1 \\ A_2 \\ A_3 \\ \vdots
    \end{pmatrix}=\sigma \begin{pmatrix}
        A_0\\ A_1 \\ A_2 \\ A_3  \\ \vdots
    \end{pmatrix},
\end{equation}
where \begin{equation} \label{params_disp}
    a=-\omega^2 D_T,\qquad b=-\mathrm{i}\tfrac{1}{2}\omega \Pe,\qquad C_1=\frac{\gamma\omega}{\omega^2+\alpha}.
\end{equation} 
The idea is to consider the $n$-dimensional system resulting from truncating $M$ to the $n\times n$ matrix $M_n$ and exploit the fact that, as $n\to \infty$, the eigenvalue $\sigma_n^\text{max}$ with the largest real part of $M_n$ converges extremely fast to $\sigma^\text{max}$ \cite{ikebe1996eigenvalue}. \cref{fig:dispersion}  plots $\text{Re}(\sigma_n^\text{max}) = 0$ for $\lambda = 0, 0.1$ and various values of $n$, displaying the rapid convergence to $\text{Re}(\sigma^\text{max}) = 0$, the boundary between stability and instability of the linearised problem $\partial_t \tilde f = L_\lambda \tilde f$. The scheme has already converged for $n=2$ in the case of $\lambda = 0$, while it requires $n= 8$ to converge when the look-ahead parameter is $\lambda = 0.1$.
To the right of the converged lines lies the linearly unstable region; that is, the homogeneous state becomes unstable for large enough $\gamma$. 
\begin{figure}[htb]
    \centering
    \subfloat[$\lambda=0$]{\includegraphics[width=0.49\textwidth]{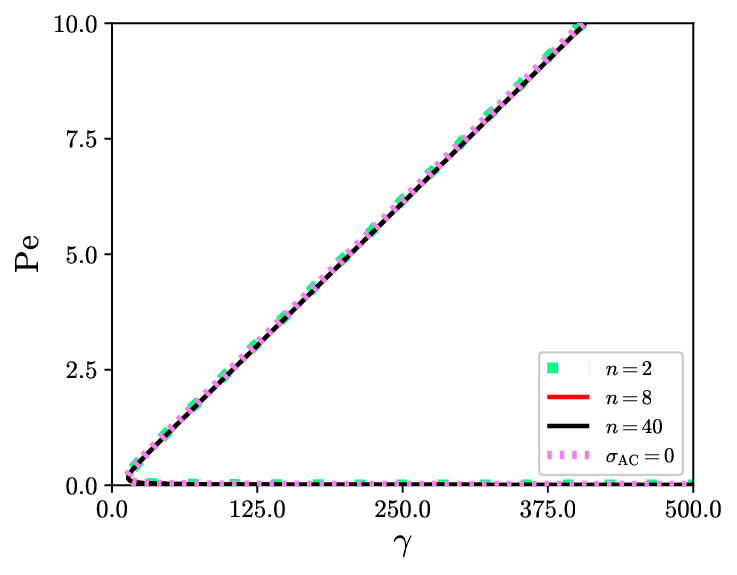}}
    \subfloat[$\lambda=0.1$]{\includegraphics[width=0.49\textwidth]{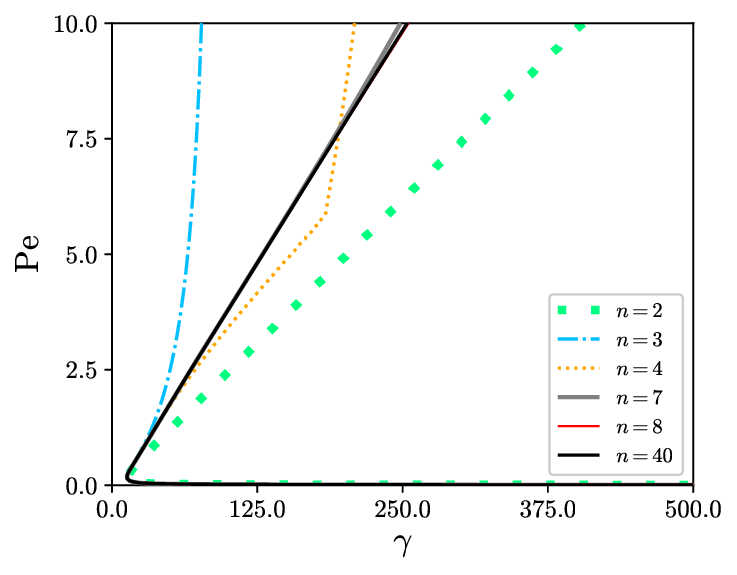}\label{fig:lambdainstabline}}
    \caption{Lines $\text{Re}(\sigma_n^\text{max}) = 0$ in the $\Pe$-$\gamma$ plane obtained from solving \eqref{eq:evproblem} truncated at $n$ for $\lambda = 0$ (a) and $\lambda = 0.1$ (b). As $n\to\infty$, the lines converge to a single line (the cases $n =2$ and $n = 8$ are indistinguishable from $n = 40$ for $\lambda = 0$ and $\lambda = 0.1$ respectively thanks to the rapid convergence of the scheme.  The linearly unstable region is to the right of the converged line. In (a), we also plot the line corresponding to the adiabatic closure \cref{adiabatic_closure}. Other parameters are $D_T=0.01,\alpha=1,\omega=2\pi$.}
    \label{fig:dispersion}
\end{figure}

The linear instability line obtained with our numerical scheme can be compared with theoretical results in the literature for the $\lambda=0$ case, as in the AAA model. In particular, given the convergence observed in \cref{fig:dispersion}(a), we consider $n=2$. Truncating the system \eqref{eq:evproblem} at $n = 2$ results in the following dispersion relation
\begin{equation} \label{M_2}
    -\tfrac{1}{2}\Pe^2+\frac{\Pe\gamma }{8\pi^2+2\alpha}- (1 + 4\pi^2D_T) D_T=0.
\end{equation}
We may compare \eqref{M_2} with the adiabatic closure method employed in \cite{pohl2014dynamic,liebchen2017phoretic}, which can be described as follows. They consider the system of equations for the moments ${\bf p}_k = \int {\bf e}(k \theta) f \dd \theta$, where $\bf{p}_0 \equiv \rho$ and $\bf{p}_1 \equiv \bf{p}$, and close the system at order $n$ by setting $f_k = 0$ for $k>n$ and $\partial_t f_n =0$. This results in a hydrodynamic model for $\rho, \bf{p},\bf{p}_2, \dots, \bf{p}_{n-1}$ and $c$. Lastly, the linear instability criterion is obtained by linearising the hydrodynamic model around the homogeneous state and dropping second-order spatial derivatives of ${\bf p}_k$ for $k \ge 1$. 
Cast in terms of our scheme, the adiabatic closure is equivalent to considering a truncated $n\times n$ system 
$$
\begin{pmatrix}
        a & b & 0 & \hdots & 0 & 0\\
        iC_1+2b & -1 & b & \hdots & 0 & 0\\
        0 & b & -4 & \hdots  & 0 & 0\\
        \vdots & \vdots & \vdots & \ddots  & \vdots & \vdots \\
        0 & 0 & 0 &\hdots  & -(n-2)^2  & b\\
        0 & 0 & 0 &\hdots  & b& -(n-1)^2 
    \end{pmatrix}\begin{pmatrix}
        A_0\\ A_1 \\ A_2 \\ \vdots \\ A_{n-2} \\ A_{n-1} 
    \end{pmatrix}=\sigma \begin{pmatrix}
        A_0\\ A_1 \\ A_2\\ \vdots \\ A_{n-2}  \\ A_{n-1}
    \end{pmatrix}\!,
$$
where $a$ and $b$ are as in \cref{params_disp}.
For $n = 2$, the adiabatic closure leads to \cite{pohl2014dynamic,liebchen2017phoretic}
\begin{equation} \label{adiabatic_closure}
    -\tfrac{1}{2}\Pe^2+\frac{\Pe\gamma }{8\pi^2+2\alpha}-D_T=0.
\end{equation}
Comparing \eqref{M_2} with \eqref{adiabatic_closure}, we see that the latter is missing a term proportional to $D_T^2$, which will be small for small $D_T$. This is what we can observe in \cref{fig:dispersion}(a), which uses $D_T = 0.01$: the adiabatic closure line \eqref{adiabatic_closure} is almost indistinguishable from the $n=2$ and the converged line obtained via our truncation scheme. However, increasing $D_T$ will lead to noticeable differences. In principle, the adiabatic closure method could be applied to the model for $\lambda>0$, but as Figure \ref{fig:lambdainstabline} indicates, it would not capture the instability line for low truncations with $n\leq 7$.

\subsection{The role of the look-ahead mechanism in the instabilities of the homogeneous state}\label{sec:num}
The previous sections clearly show the model behaviour in the linearly stable region. In this section, we investigate the linearly unstable region using numerical methods. 
We begin by exploring how the eigenfunction associated with the eigenvalue $\sigma_n^\text{max}$ changes with $\lambda$. Then, we present time-dependent numerical simulations of the macroscopic model \cref{eq:fcresc} showing complex emergent behaviour consistent with lane formation. Finally, we combine the simulation results with the linear theory and quantitatively classify the steady states into three different categories: (i) the homogeneous state, (ii) an aggregation spot and (iii) a lane reflecting particles moving up and down a trail of pheromone.

\subsubsection{Eigenfunctions of the linearised operator}\label{sec:lineffeigf}

We explore how the eigenfunction associated with the eigenvalue with the largest real part $\sigma_n^\text{max}$ changes with $\lambda$. In particular, we pick the pair $(\gamma,\Pe) = (325, 3.5)$ and $\lambda= 0, 0.1$ such that we are 
in the region where $\text{Re}(\sigma_n^\text{max})>0$ (see \cref{fig:dispersion}) and consider the eigenfunction (cf.  \cref{ansatz})
 \begin{equation}
 	\tilde f_n(x, \theta) = \text{Re}\left[\sum_{0\le k\le n}A_k\cos(k\theta) \exp(\ii\omega x)\right]
\end{equation}
of the truncated eigenvalue problem. The eigenfunctions $\tilde f_n(x, \theta)$ with $n=40$ for the cases $\lambda= 0$ and $\lambda= 0.1$  are shown in
\cref{fig:eigflambda0,fig:eigflambda1} respectively. 

Without the look-ahead mechanism ($\lambda = 0$), the eigenfunction has peaks at $\theta = 0$ and $\theta = \pi$ slightly offset, left and right, respectively, of $x = 0$. This corresponds to ants facing left and right and balancing each other to form an immobile lane or a ``one-dimensional spot'' (see \cref{fig:lanelambda0}). When the look-ahead mechanism is turned on ($\lambda = 0.1$), the peaks shift to angles $\frac{1}{2}\pi$ and $-\frac{1}{2}\pi$ as in Figure \ref{fig:eigflambda1}. This reflects most of the particles moving up and down along the middle of a lane, as illustrated in Figure \ref{fig:lanelambda1}. 
In conclusion, the dominating patterns are different for $\lambda=0$ and $\lambda>0$, and turning on the look-ahead mechanism gives rise to lane formation at the linear level.
\begin{figure}[htb]    
\centering
    \subfloat[$\lambda=0$]{
         \includegraphics[width=0.48\linewidth]{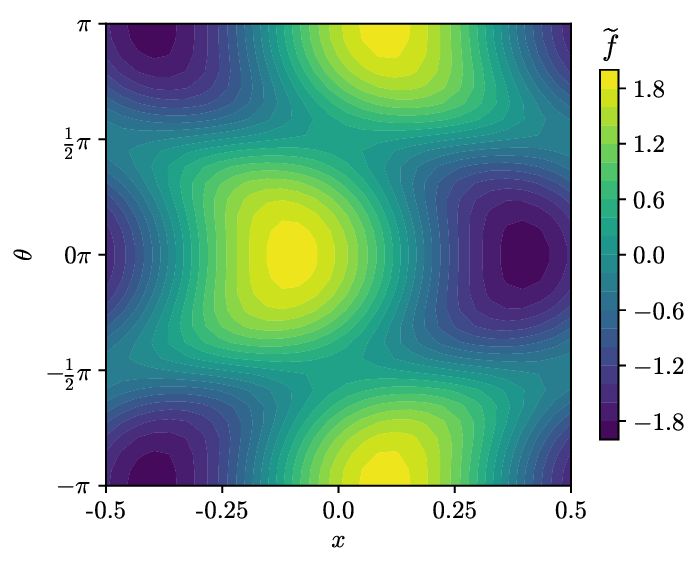}
         \label{fig:eigflambda0}}
    \subfloat[$\lambda=0.1$]{
         \includegraphics[width=0.48\linewidth]{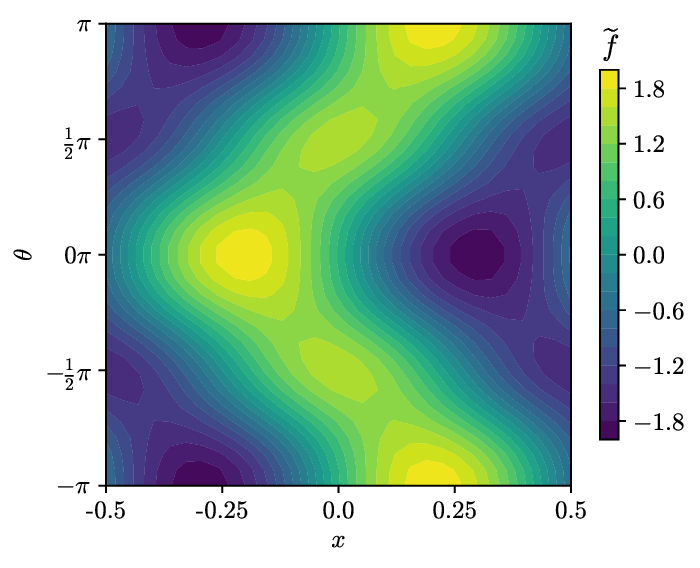}
         \label{fig:eigflambda1}}
    \caption{Illustration of the effect of introducing the look-ahead mechanism to the eigenfunctions of the linear operator. Plotted are heatmaps for the value of the eigenfunction $\tilde{f}(x,\theta)$ of the truncated matrix with the largest real part. Parameters: $D_T=0.01,\Pe=3.5,\gamma=325.0,n=40$.}
    \label{fig:eigf_effect_lambda}
\end{figure}
\begin{figure}[htb]
    \centering
    \subfloat[$\lambda=0$]{
         \includegraphics[width=0.48\linewidth]{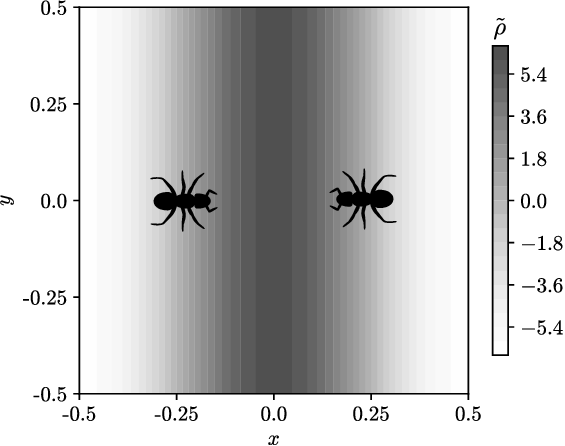}
         \label{fig:lanelambda0}}
    \subfloat[$\lambda=0.1$]{
         \includegraphics[width=0.48\linewidth]{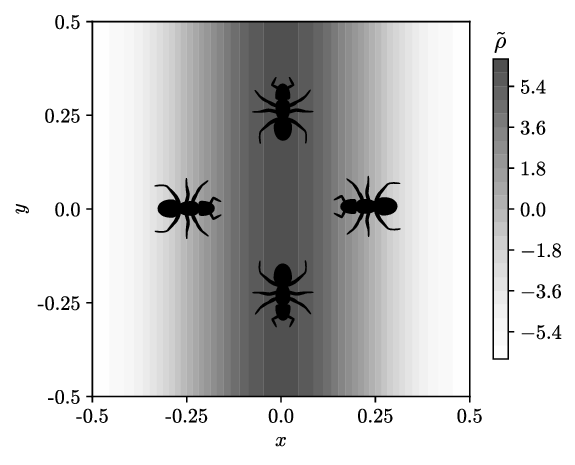}
         \label{fig:lanelambda1}}
    \caption{Illustration of the effect of introducing the look-ahead mechanism to the eigenfunctions of the linear operator. The peaks in Figures \ref{fig:eigflambda0} and \ref{fig:eigflambda1} are illustrated here as ants at the locations of the peak with orientations in the direction of the peak in the accompanying plot \ref{fig:eigflambda0} and \ref{fig:eigflambda1}, respectively. The background reflects the spatial density associated with the eigenfunction $\tilde{f}$, $\tilde{\rho}=\int_0^{2\pi}\tilde{f}\dd\theta$, to indicate what the lane looks like spatially.}
    \label{fig:eigf_effect_lambda2}
\end{figure}

\subsubsection{Time-dependent simulations} \label{sec:pde_scheme}

We approximate solutions to the macroscopic model \cref{eq:fcresc} using a finite volume scheme similar to \cite{bruna2022phase}. The scheme can be derived by using the mobility form of equation (\ref{eq:fresc}): 
\begin{equation}
    \partial_t f + \nabla_\xi\cdot(f\bf{U})= 0,\qquad {\bf U } =  \begin{pmatrix} -D_T\partial_x \log f+\Pe\cos\theta\\ -D_T\partial_y\log f+\Pe\sin\theta\\-\partial_\theta \log f+\gamma\bf{n}_\theta\cdot\nabla c_\lambda \end{pmatrix}
\end{equation} where $\bf{U} = (U^x, U^y, U^\theta)$ the velocity vector.

The domain $\Sigma=\T^2\times \T_{2\pi}$ is discretised into $N_x\times N_y\times N_\theta$ cells indexed by $(i,j,k)$ 
\begin{equation}
    C_{i,j,k}=[(i-1)\Delta x,i\Delta x]\times[(j-1)\Delta y,j\Delta y]\times[(k-1)\Delta \theta,k\Delta \theta],
\end{equation} where $\Delta x=1/N_x,\Delta y=1/N_y$ and $\Delta\theta=2\pi/N_\theta$. Then we approximate $f(t, x_i, y_j, \theta_k)$ with $x_i = i\Delta x, y_j = j \Delta y, \theta_k = k \Delta \theta$ by the cell--averages 
\begin{equation}f_{i,j,k}(t)=\frac{1}{\Delta x\Delta y\Delta \theta}\iiint_{C_{i,j,k}}f(x,y,\theta,t)\dd x\dd y\dd \theta.
\end{equation}
We use the finite-volume scheme 
\begin{equation}
\frac{\dd}{\dd t}f_{i,j,k}=-\frac{F_{i+1/2,j,k}^x-F_{i-1/2,j,k}^x}{\Delta x}-\frac{F_{i,j+1/2,k}^y-F_{i,j-1/2,k}^y}{\Delta y}-\frac{F_{i,j,k+1/2}^\theta-F_{i,j,k-1/2}^\theta}{\Delta \theta},
\end{equation} 
for $i = 0, \dots, N_x-1, j = 0, \dots N_y-1, k = 0, \dots, N_\theta - 1$. We approximate the flux $F^x$ 
at the cell interfaces by the numerical upwind flux
\begin{equation}
	F^x_{i+1/2,j,k} = (U_{i+1/2,j,k}^x)^+ f_{i,j,k} + (U_{i+1/2,j,k}^x)^- f_{i+1,j,k}, 
\end{equation}
using $(\cdot)^+ = \max(\cdot, 0)$ and $(\cdot)^- = \min(\cdot, 0)$, and similarly for $F^y$ and $F^\theta$. The velocities $U^x, U^y, U^\theta$ are approximated by centered differences, e.g., the $x$-velocity is
\begin{equation}
U_{i+1/2,j,k}^x=-D_T\frac{\log f_{i+1,j,k}-\log f_{i,j,k}}{\Delta x}+ \Pe \cos\theta_k.
\end{equation}
The angular velocity is discretised differently depending on $\lambda$. For $\lambda = 0$, we use
\begin{equation}
\begin{aligned}
U_{i,j,k+1/2}^\theta=&-D_R\frac{\log f_{i,j,k+1}-\log f_{i,j,k}}{\Delta \theta}-\gamma\sin( \theta_{k+1/2})[\partial_x c_\lambda]_{i,j}
\\&+\gamma\cos(\theta_{k+1/2})[\partial_y c_\lambda]_{i,j},
\end{aligned}\end{equation}
with
\begin{align}
    [\partial_x c]_{i,j}=\frac{c_{i+1,j}-c_{i-1,j}}{2\Delta x}, \qquad 
    [\partial_y c]_{i,j}=\frac{c_{i,j+1}-c_{i,j-1}}{2\Delta y}.
\end{align}
For $\lambda>0$, we use the identity: 
\begin{equation}
    \bf{n}_\theta\cdot\nabla c(\xx+\lambda\bf{e}_\theta)=\frac{1}{\lambda}\partial_\theta(c(\xx+\lambda\bf{e}_\theta)),
\end{equation}
so that the $\theta$-flux becomes 
\begin{equation}
    U_{i,j,k+1/2}^\theta=-D_R\frac{\log f_{i,j,k+1}-\log f_{i,j,k}}{\Delta \theta}+\frac{\gamma}{\lambda}\frac{c(\xx_{i,j}+\lambda\bf{e}_{\theta_{k+1}})-c(\xx_{i,j}+\lambda\bf{e}_{\theta_k})}{\Delta\theta}.
\end{equation}
We solve the chemical field equation \eqref{eq:cresc}, $\alpha c - \Delta c = \rho$, using second-order finite differences with the right-hand side computed by
\begin{equation}
	\label{rho_discrete}
	\rho_{i,j}(t) = \Delta \theta \sum_{k=1}^{N_\theta} f_{i,j,k}(t).
\end{equation}
The shifted chemical field $c_\lambda$ requires evaluations at  the points $\xx_{i,j}+\lambda\bf{e}_{\theta_k}$. We linearly interpolate the values $c_{i,j}$ using the  \texttt{Interpolations} package in \texttt{Julia}. Lastly, the resulting system of ODEs for $f_{i,j,k}(t)$ is solved using a forward Euler method with adaptive time-stepping. We used the same time-stepping condition as in \cite{bruna2022phase}.

\subsubsection{Spots and lanes}

\cref{fig:spot_rho_p,fig:stripe_rho_p} show examples of the typical behaviour of the macroscopic model \cref{eq:fcresc} obtained with the numerical scheme of \cref{sec:pde_scheme}. They show the evolution of the spatial density $\rho(t,\xx)$ \cref{eq:rho} and the polarisation ${\bf p}(t,\xx)$ \cref{eq:p} at four time instances. The first time corresponds to the initial condition, which is uniformly random (in space and orientation) and normalised such that $\int f\dd\xx\dd\theta=1$, and the last one corresponds to an already equilibrated or steady-state solution.
The values of the polarisation at the grid locations are obtained analogously to those of $\rho_{i,j}$ (see \cref{rho_discrete}). 

\cref{fig:spot_rho_p} shows the typical cluster formation or aggregation for small $\Pe$, 
already present in model \cref{eq:fcresc} with $\lambda = 0$ (or the AAA model \cite{liebchen2017phoretic,pohl2014dynamic}), and similar to the Keller--Segel collapse. Hence, it corresponds to the macroscopic view of the particle behaviour shown in \cref{fig:curvelambda0}. We call these endstates \emph{spots}. 

When the look-ahead mechanism is turned on ($\lambda > 0$) and $\Pe$ is sufficiently large, we observe the formation of a \emph{lane}, representative of trail formation (\cref{fig:stripe_rho_p}). This solution corresponds to a lane with bidirectional movement along the stripe, although it is difficult to see it in \cref{fig:stripe_rho_p}. To this end, we plot the solution $f(t,\xx,\theta)$ at the final time in the $(x,\theta)$ domain in \cref{fig:fytheta_plot}. This shows two peaks at $x \approx -0.1$, the location of the centre of the lane, and $\theta =\frac{1}{2}\pi, -\frac{1}{2}\pi$, meaning that particles predominantly move up and down. Moreover, slightly left to the lane, $x \lessapprox -0.1$, the angles shift to the left semi-plane, meaning that ants turn slightly to join the lane. Analogously, for $x \gtrapprox -0.1$, the angles turn towards $\theta = \pm\pi$, so ants move back towards the lane.

We note the similarity of \cref{fig:fytheta_plot} to the eigenfunction associated with the linear instability in \cref{fig:eigflambda1}.  This shows that the linear stability theory of \cref{sec:lineffeigf} is consistent with the patterns that emerge at the nonlinear level. In Appendix \ref{sec:FEM}, we show further evidence that model \cref{eq:fcresc} admits lanes as stationary solutions. The lane endstate of \cref{fig:stripe_rho_p} is a macroscopic representation of the particle behaviour of \cref{fig:curvelambda1}, in the sense that particles move along a line in both scenarios.
\begin{figure}[htb]
    \centering
   \includegraphics[width=0.9\textwidth]{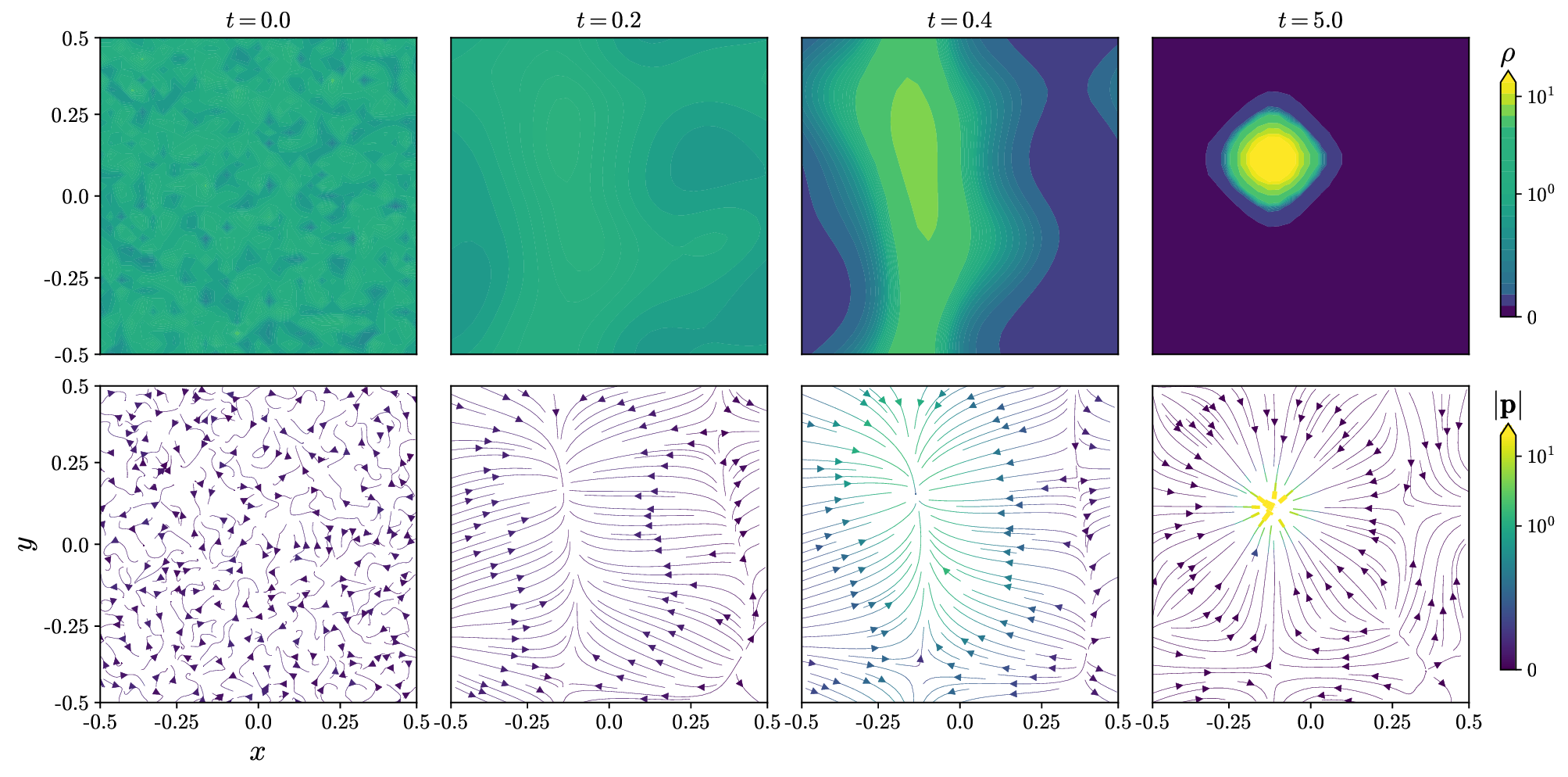}
    \caption{Example of typical behaviour for the model. The initial condition is randomly uniform on $[0,1]$ for each $f_{i,j,k}(t=0)$. The endstate in this simulation represents a steady cluster. Parameters: $D_T=0.01,\Pe=1.5,\gamma=325,\lambda=0.1,N_x=N_y=31,N_\theta=21,\Delta t=10^{-5}$.}
    \label{fig:spot_rho_p}
\end{figure}
\begin{figure}[htb]
    \centering
   \includegraphics[width=0.9\textwidth]{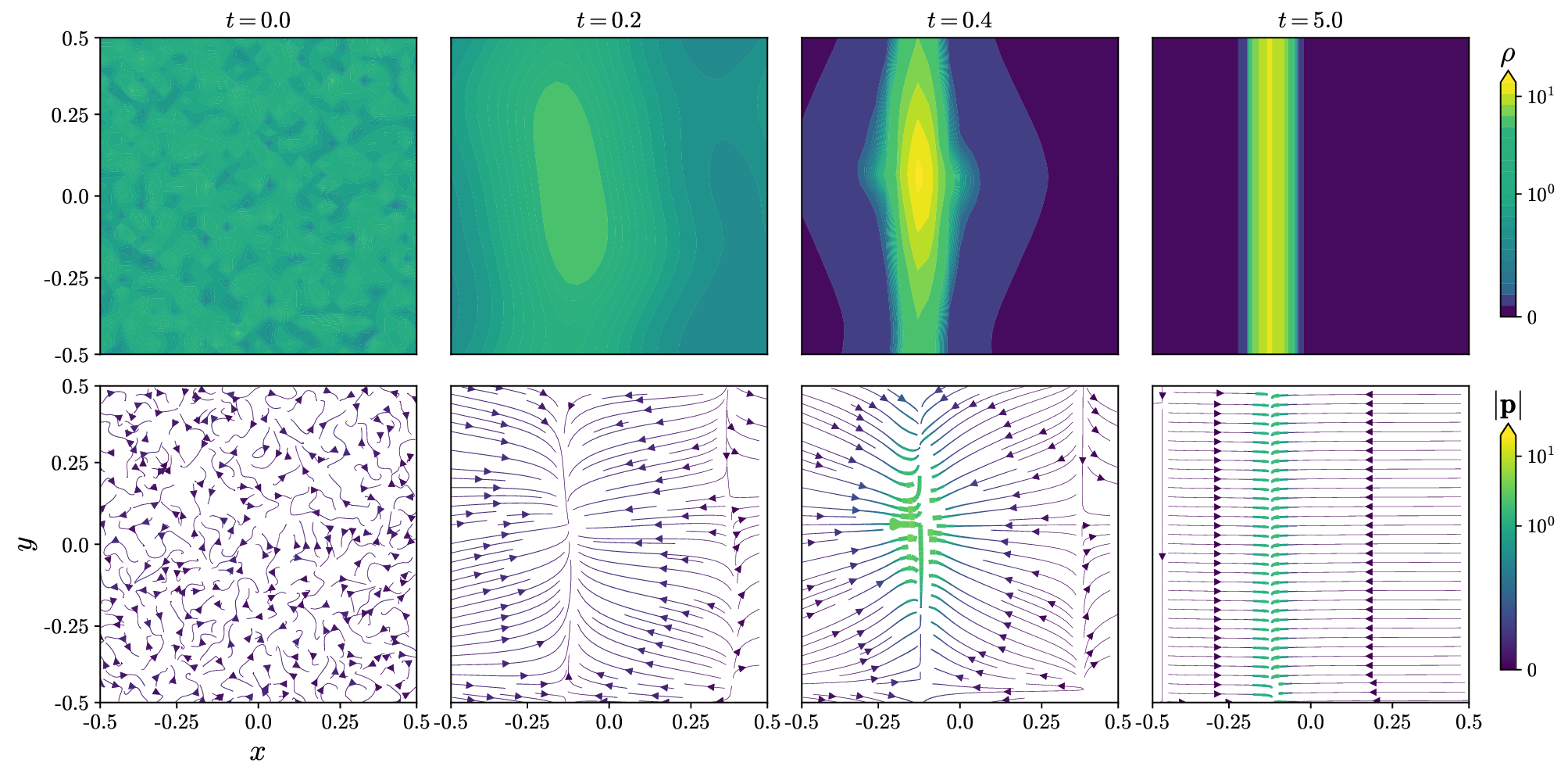}
    \caption{Example of typical behaviour for the model. The initial condition is randomly uniform on $[0,1]$ for each $f_{i,j,k}(t=0)$. The endstate in this simulation represents a steady lane. Parameters: $D_T=0.01,\Pe=3.5,\gamma=325,\lambda=0.1,N_x=N_y=31,N_\theta=21,\Delta t=10^{-5}$.}
    \label{fig:stripe_rho_p}
\end{figure}
\begin{figure}[htb]
   \centering\includegraphics[width=0.5\textwidth]{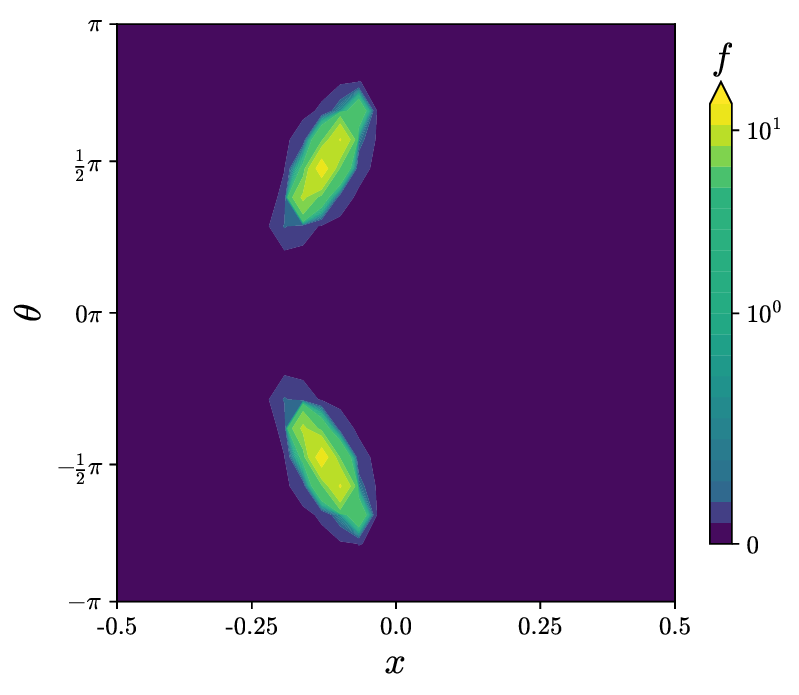}
    \caption{Heatmap of $f(t=5.0,x,y,\theta)$ from Figure \ref{fig:stripe_rho_p} showing two peaks in the polarsation. Changing $y$ does not change the plot, hence the heatmap looks the same for all other $y$. Parameters: $D_T=0.01,\Pe=3.5,\gamma=325,\lambda=0.1,N_x=N_y=31,N_\theta=21,\Delta t=10^{-5}$.}
    \label{fig:fytheta_plot}
\end{figure}

Our simulations suggest that model \cref{eq:fcresc} with $\lambda > 0$ is bistable for some parameter values. In particular, in the region of linear instability of the homogeneous state (see \cref{fig:lambdainstabline}), we observe some pairs $(\gamma, \Pe)$ that result in either a spot or a lane endstate depending on the initial condition. This is shown in \cref{fig:bistable} for a pair $(\gamma, \Pe)$ in between those used in \cref{fig:spot_rho_p,fig:stripe_rho_p}. It shows two realisations of the solution at $t = 5$ given a uniformly random initial condition. That is, for a fixed $\gamma$, there is a range of $\Pe$ that results in the model having two basins of attraction, one corresponding to spot endstates and one related to lane endstates. For low $\Pe$, the dominant basin is that of spots (\cref{fig:spot_rho_p}) while for moderate $\Pe$ (below the homogeneous state stability line), the dominant basin is that of lanes (\cref{fig:stripe_rho_p}).

\begin{figure}[htb]
    \centering
   \subfloat[]{\includegraphics[width=0.45\textwidth]{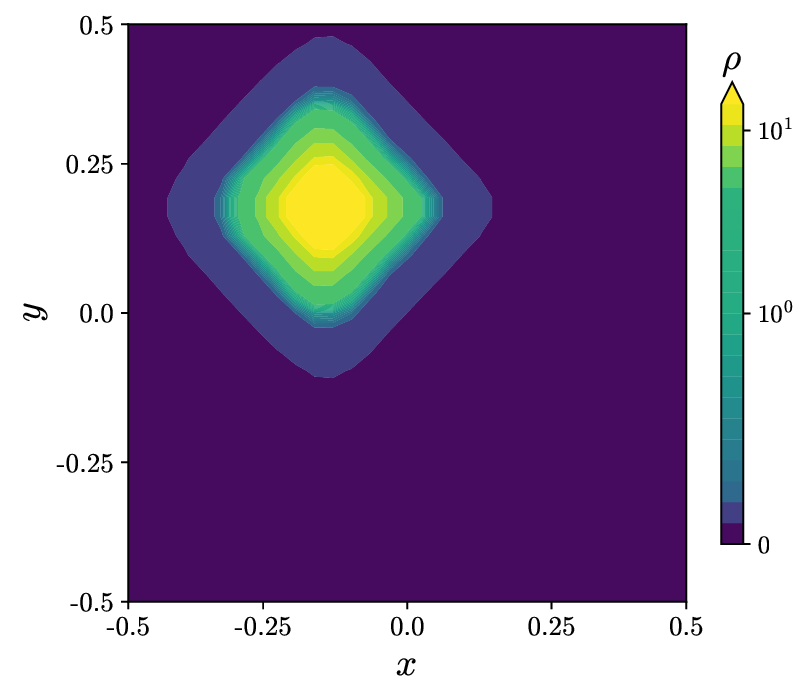}\label{fig:spot_bistab}}
   \subfloat[]{\includegraphics[width=0.45\textwidth]{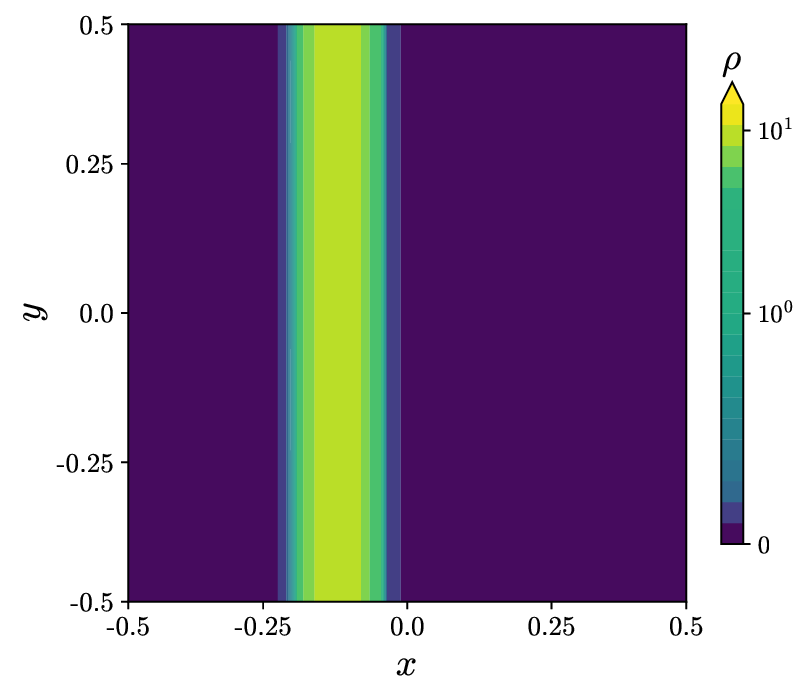}\label{fig:lane_bistab}}
    \caption{Two different realisations of simulations for the same parameters but with different initial conditions. Heatmaps for $\rho(t=5.0,\xx)$ are shown at the final time. The seeding for the random uniform initial condition was chosen differently in each case. Other parameters: $D_T=0.01,\Pe=2.5,\gamma=325,\lambda=0.1,N_x=N_y=31,N_\theta=21,\Delta t=10^{-5}$.}
    \label{fig:bistable}
\end{figure}
\begin{figure}[htb]
    \centering
   \includegraphics[width=0.6\textwidth]{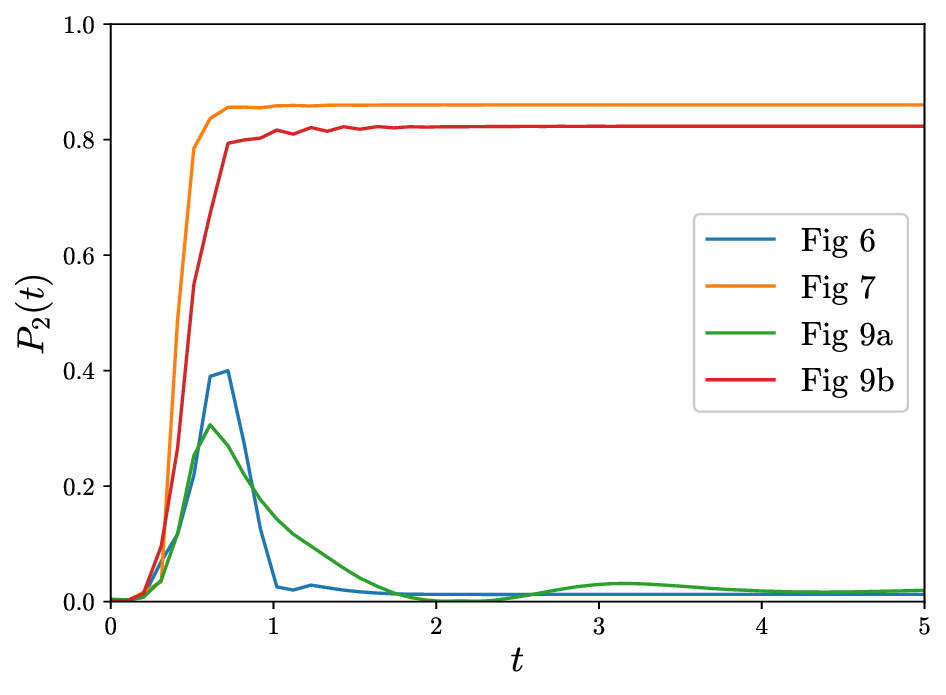}
    \caption{The evolution of $P_2$ as for the data from Figures \ref{fig:spot_rho_p}, \ref{fig:stripe_rho_p}, \ref{fig:spot_bistab} and \ref{fig:lane_bistab} respectively. The seeding for the random uniform initial condition was chosen differently in each case. Other parameters: $D_T=0.01,\gamma=325,\lambda=0.1,N_x=N_y=31,N_\theta=21,\Delta t=10^{-5}$.}
    \label{fig:P2_evol}
\end{figure}

\subsubsection{Quantitative classification of spots versus stripes}

We have seen that, for certain parameter values, the solutions of the macroscopic model \cref{eq:fcresc} with $\lambda > 0$ converge to either a spot or a lane. We now present a quantitative method to discern lanes from spots based on the mean second-order moment 
\begin{equation}\label{eq:bigP2}
    P_2(t)=\left |\int_{\T^2}\bf{p}_2(t,\xx)\dd\xx \right|= \left| \int_{\T^2}\int_0^{2\pi}\bf{e}(2\theta)f(t,\xx,\theta)\dd\xx\dd\theta \right|.
\end{equation} 
The choice for $P_2$ is motivated by the shape of the eigenfunctions as discussed in \cref{fig:eigf_effect_lambda}. The form of the eigenfunctions is \begin{equation}
    \sum_{k\geq 0}A_k \cos(k\theta),
\end{equation} and we saw that the eigenfunctions predicted the patterns well that emerge at the nonlinear level. Also, the coefficients $(A_k)_{k\geq 0}$ typically decay, so the first terms dominate the shape of the pattern. If we only include $k=0$ or $k=0,1$, we can only get homogeneous solutions or one-dimensional spots as illustrated in \cref{fig:lanelambda0}, respectively. The first relevant coefficient for lane formation will be $A_2$ for $k=2$. Projecting onto the coefficient $A_2$ is exactly what the mean second-order moment $\bf{p}_2(t,\xx)$ represents, and hence, integrating over the whole spatial domain gives a measure for how much of the total density is contributing to the lane formation. In \cref{fig:p2field} we plot the associated vectorfield of $\bf{p}_2(t,\xx)$ for the simulations from \cref{fig:spot_rho_p} and \cref{fig:stripe_rho_p}. This shows that $\bf{p}_2(t,\xx)$ is relatively large and unidirectional in the spatial region where the lane is located and sums to zero in the case of a spot.
\begin{figure}[htb]
    \centering
    \subfloat[Fig \ref{fig:spot_rho_p}]{\includegraphics[width=0.4\textwidth]{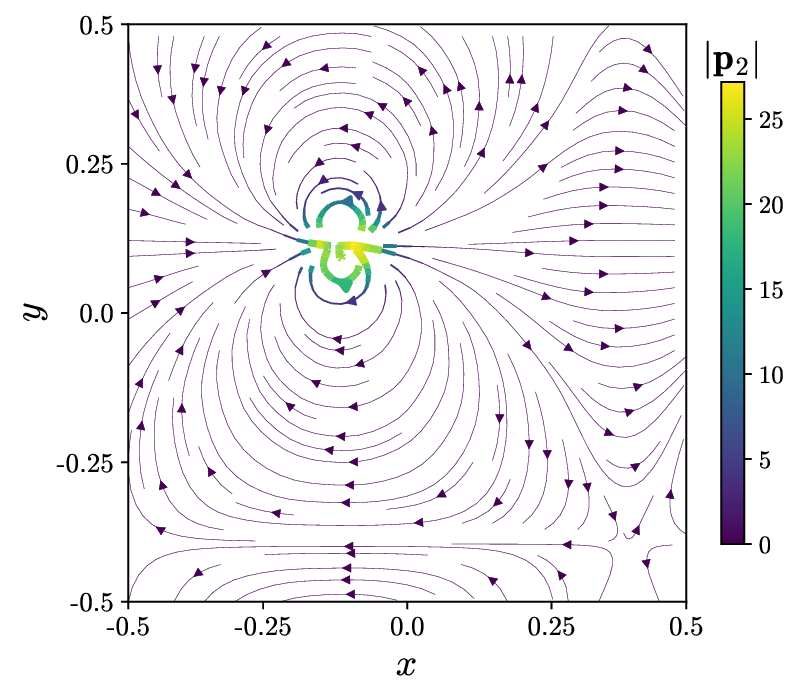}}
    \subfloat[Fig \ref{fig:stripe_rho_p}]{\includegraphics[width=0.4\textwidth]{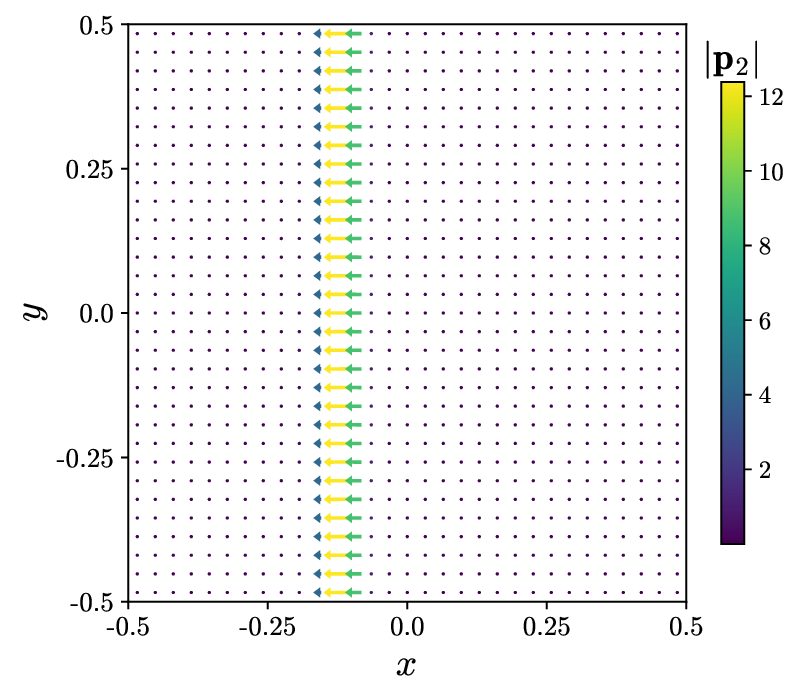}}
    \caption{Vectorfield plots for $\bf{p}_2(t=5.0,\xx)$ at the final time with the simulation data from (a) Figure \ref{fig:spot_rho_p} and (b) Figure \ref{fig:stripe_rho_p}. Other parameters: $D_T=0.01,\gamma=325,\lambda=0.1,N_x=N_y=31,N_\theta=21,\Delta t=10^{-5}$.}
    \label{fig:p2field}
\end{figure}

We evaluate $P_2(t)$ numerically from the PDE simulations using straightforward discrete integrals (cf. \cref{rho_discrete}). Lane solutions are characterised by large values of $P_2$, while spot solutions (as well as, obviously, the homogeneous solution) have low $P_2$.  In \cref{fig:P2_evol}, the time evolution of $P_2(t)$ is plotted for the simulations as shown in \cref{fig:spot_rho_p,fig:stripe_rho_p,fig:spot_bistab,fig:lane_bistab}, showing the distinctive behaviour of $P_2$ for either a lane or a spot.

Next, we run simulations of \cref{eq:fcresc} for a range of parameter pairs $(\gamma, \Pe)$  up to time $t = 5$ (by which time all solutions have equilibrated). For each pair $(\gamma, \Pe)$, we consider eight realisations using uniformly random initial data and evaluate the solutions at the final time $t = 5$ (see \cref{fig:rho706,fig:rho1001,fig:rho4472,fig:rho5555,fig:rho6061,fig:rho8154,fig:rho9437,fig:rho9956} in Appendix D).
In particular, we compute the norm of the distance to the homogeneous solution and keep the largest of the eight, and the average of the mean second moment $P_2$ \cref{eq:bigP2},
\begin{equation}
	d_{f_\ast}\equiv \max_j \|f(t_\text{max}) - f_\ast \|_{L^2}, \qquad \bar{P_2}\equiv\langle |P_2(t_\text{max})| \rangle_j,
\end{equation}
where $j = 1, 2, \dots, 8$ labels the trajectory. \cref{fig:l2p2} shows the values of these two quantities in the $(\gamma, \Pe)$ plane. The black line is the dispersion relation from the linear stability theory from \cref{fig:lambdainstabline}. 
\begin{figure}[htb]
\centering
   \subfloat[$d_{f_\ast}$]{\includegraphics[width=0.48\textwidth]{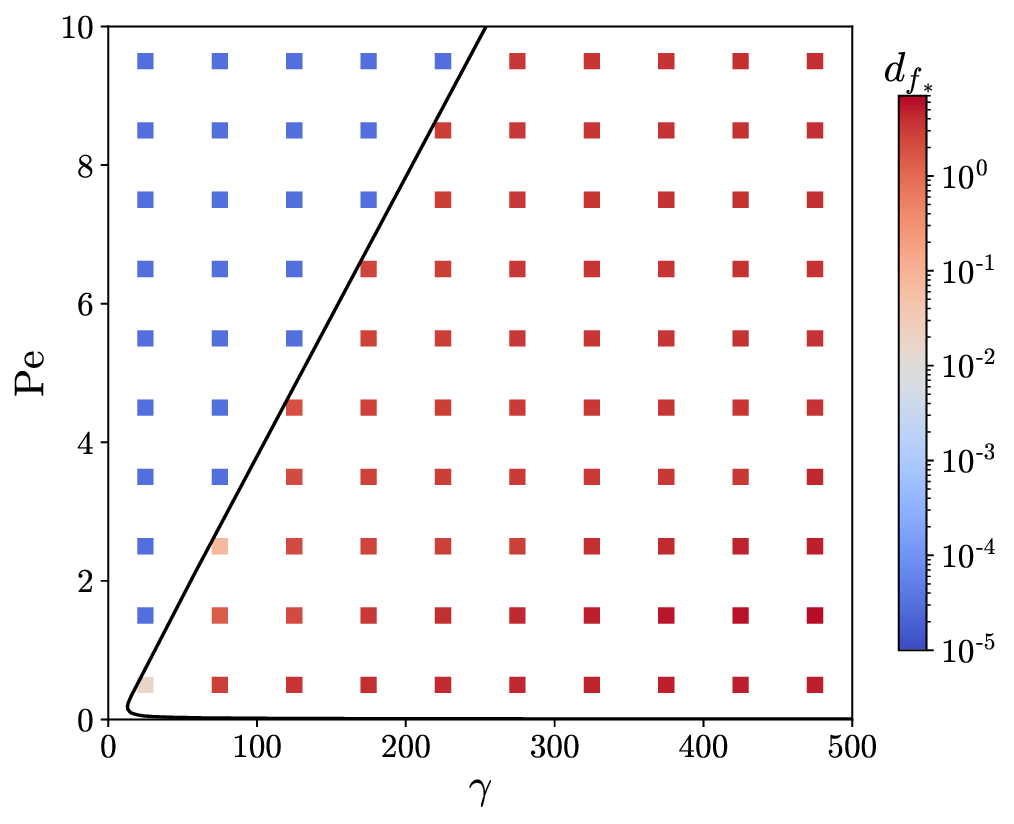}\label{fig:l2max}}
   \subfloat[$\bar{P_2}$]{  \includegraphics[width=0.48\textwidth]{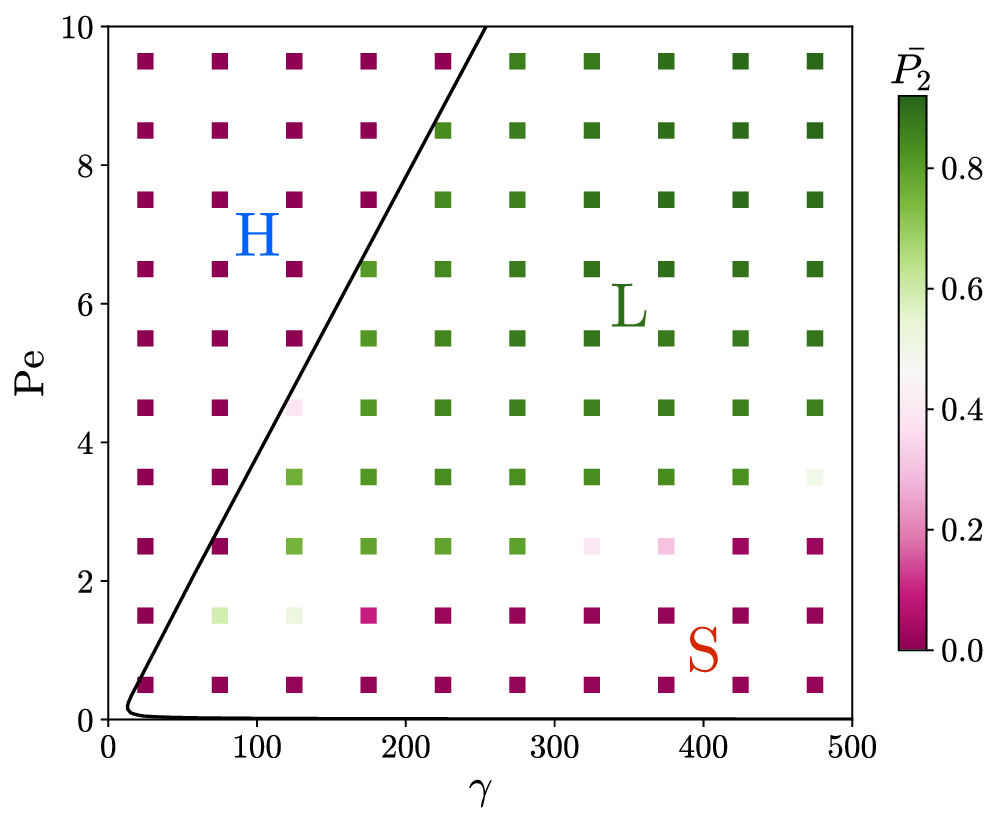}\label{fig:meanP2}}
\caption{Scatter plot for different seedings for 8 simulations for each pair $\Pe$--$\gamma$. The initial condition depends on the seeding and is chosen to be randomly uniform on $[0,1]$ for each $f_{i,j,k}(t=0)$. In Figure \ref{fig:l2max}, the maximum value of the discretised value of $\|f-f_\ast\|_{L^2}$ at the end $t=5.0$ for these eight simulations is plotted. In Figure \ref{fig:meanP2} the mean value of the discretised value of $P_2$ at the end $t=5.0$ for the 8 simulations is plotted, where $P_2$ is defined as in (\ref{eq:bigP2}). The black line in both figures is the linear instability line from \cref{fig:lambdainstabline}. The letters in Figure \ref{fig:meanP2} are meant to make clear that in the region to the left of the black line, the solutions converge to the homogeneous state (H) and right to it predominantly to a lane (L) or a spot (S), with a bistable region in between the L and S region. In Appendix D, heatmaps of $\rho(t=5.0,\xx)$ are shown for each different seeding; see \cref{fig:rho706,fig:rho1001,fig:rho4472,fig:rho5555,fig:rho6061,fig:rho8154,fig:rho9437,fig:rho9956}. Other parameters are $D_T=0.01,\lambda=0.1,\alpha=1,N_x=N_y=31,N_\theta=21,\Delta t=10^{-5}$.}\label{fig:l2p2}
\end{figure}

The distance to the homogeneous solution (\cref{fig:l2max}) shows excellent agreement between the linear stability theory and the nonlinear stability from simulations of the original model. That is, left to the linear instability line, the simulations of the nonlinear model converge to the homogeneous state as the $L^2$ norm of the difference $f-f_\ast$ is small at $t=5.0$ (this can also be clearly seen in the spatial density plots in \cref{fig:rho706,fig:rho1001,fig:rho4472,fig:rho5555,fig:rho6061,fig:rho8154,fig:rho9437,fig:rho9956} in Appendix D). This region is denoted by the letter `H' in Figure \ref{fig:meanP2}. Right to the linear instability line, we see that the solutions do not converge to the homogeneous state (the $L^2$ norm of the difference $f-f_\ast$ is not small at $t=5.0$). 

While the norm of the distance to the homogeneous solution $f_\ast$ allows us to identify non-trivial stationary solutions, with that alone, we cannot distinguish between spots and lanes. \cref{fig:meanP2} shows that the average of $P_2$ subdivides the region of linear instability into two regions, labelled by `S' and `L'. The `S' region has low $P_2$ and corresponds to the spot-dominant region; that is, all eight realisations converged to spot-like solutions (see \cref{fig:rho706,fig:rho1001,fig:rho4472,fig:rho5555,fig:rho6061,fig:rho8154,fig:rho9437,fig:rho9956} in Appendix D). Instead, the `L' region has $P_2$ close to unity as characteristic of lane-like solutions. The thin boundary region between the `S' and `L' regions is the region of bistability between spot and lane solutions. There, the average $P_2$ takes intermediate values since a proportion of the realisations converged to spots while others converged to lanes (see \cref{fig:rho706,fig:rho1001,fig:rho4472,fig:rho5555,fig:rho6061,fig:rho8154,fig:rho9437,fig:rho9956} in Appendix D). 

In conclusion, the linear theory gives a good prediction for the nonlinear stable region, the value of $P_2$ is an excellent measure to distinguish between spots and lanes, and there is a region of bistability in the $\Pe$-$\gamma$ plane.

\appendix

\section{Numerical scheme for particle simulations}\label{sec:particlenumscheme}

To simulate the microscopic model \cref{sde_model}, 
we discretise the SDEs for the positions ${\bf X}_i$ and the orientations $\Theta_i$ with the tamed Euler scheme \cite{hutzenthaler2012strong}
\begin{subequations}
    \begin{align}
    \XX_i(t + \Delta t) &=\XX_i(t ) +  v_0\bf{e}(\Theta_i)\Delta t+\sqrt{2D_T\Delta t}\, \boldsymbol{\zeta}_i,\\
     \Theta_i (t + \Delta t)&= \Theta_i(t) + \frac{F_i \Delta t}{1+|F_i|\Delta t}+\sqrt{2D_R\Delta t}\,\zeta_i, \label{discrete_theta}
\end{align}
\end{subequations}
where $\boldsymbol{\zeta}_i$ and $\zeta_i$ are 2D and 1D standard normal $N(0,1)$ random vectors, respectively. The drift term in \cref{discrete_theta} is
\begin{equation}\label{Fi}
    F_i=\frac{\gamma}{N} \bf{n}(\Theta_i)\cdot \sum_{j=1}^N  \nabla K_{\alpha,D}(\XX_i + \lambda\bf{e}(\Theta_i) - \XX_j(t)),
\end{equation}
using the expression for the chemical field \cref{eq:K0}. If $\lambda = 0$, we modify $F_i$ to remove the self-interactions, $j \ne i$. 
The tamed scheme allows us to better resolve the singularities in $c_N$ without having to take prohibitively small time steps $\Delta t$. 
Periodic boundary conditions for ${\bf X}_i$ on $\T_L^2$ and for $\Theta_i $ on $\T_{2\pi}$ are imposed.

\section{Formal mean-field limit}\label{mflimit}

Assume that the look-ahead parameter is $\lambda >0$. 
The joint law $f^N$ of the diffusion process \cref{sde_model} for $N$ particles satisfies the $N$-particle Fokker--Planck or Liouville equation \cite{erban2019stochastic}
\begin{equation}\label{eq:liouville}
    \partial_t f^N=\sum_{i=1}^N\nabla_{\xx_i}\cdot[D_T\nabla_{\xx_i}f^N- v_0\bf{e}_{\theta_i}f^N]+\partial_{\theta_i}[D_R\partial_{\theta_i} f^N- F_i(\vec \xi) f^N],
\end{equation}
where the interaction term $F_i(\vec \xi)$ is given in \cref{Fi} , $\vec \xi = (\xi_1, \dots, \xi_N)$ and $\xi_i = (\xx_i, \theta_i)$.

From $f^N$, we can define the $k$-marginals or the $k$-particle probability density as follows 
\begin{equation} \label{marginal}
    f^N_k(t,\xi_1,\dots,\xi_k)=\int_{\Sigma^k}f^N(t,\xi_1,\dots,\xi_N)\dd\xi_{k+1}\dots \dd\xi_N,
\end{equation}
where $\Sigma=\T_L^2\times\T_{2\pi}$. We may write an equation for the first marginal $f^N_1$ by integrating \cref{eq:liouville}
\begin{equation} \label{integratedFPN}
    \begin{aligned}
    \partial_t f^N_1 &=\int_{\Sigma^{N-1}}\partial_t f^N \dd\xi_{2}\dots \dd\xi_N,\\
    &= \nabla_{\xx_1} \cdot[D_T\nabla_{\xx_1} f_1^N-v_0\bf{e}_{\theta_1} f_1^N] + D_R\partial^2_{\theta_1} f_1^N  -    
    \int_{\Sigma^{N-1}} \partial_{\theta_1} \left[ F_1(\vec \xi) f^N \right] \dd\xi_{2}\dots \dd\xi_N,
\end{aligned}
\end{equation}
where the terms with derivatives with respect to $\xx_i$ and $\theta_i$ for $i\ne 1$ vanish using the divergence theorem and the periodic boundary conditions, and 
\begin{align} \label{F1_exp}
\begin{aligned}
	F_1(\vec \xi) &= \frac{\gamma}{N} \bf{n}(\theta_1)\cdot
\sum_{i=1}^N \nabla K_{\alpha,D}(\xx_1 + \lambda\bf{e}(\theta_1) -\xx_i )\\
& = \frac{\gamma}{N} \bf{n}(\theta_1)\cdot \left[\nabla K_{\alpha,D} (\lambda\bf{e}(\theta_1) ) + 
\sum_{i=2}^N \nabla K_{\alpha,D} (\xx_1 + \lambda\bf{e}(\theta_1) -\xx_i )\right ]\\
& = \frac{\gamma}{N} \bf{n}(\theta_1)\cdot  
\sum_{i=2}^N \nabla K_{\alpha,D} (\xx_1 + \lambda\bf{e}(\theta_1) -\xx_i ).
\end{aligned}
\end{align}
Between the second and third lines, we have used that $K_{\alpha, D}(\xx) = K_0(\sqrt{\alpha/D} |\xx|)$ (see \cref{eq:K0}) and hence $\nabla K_{\alpha,D} (\lambda\bf{e}(\theta_1) ) = K_0'(\sqrt{\alpha/D} \lambda) {\bf e}(\theta_1)$, which is orthogonal to ${\bf n}(\theta_1)$. 
Thus we have that the form of $F_1$ reduces to that of the case $\lambda = 0$ where we remove the self-interactions and we may proceed with the derivation as applicable for both $\lambda = 0$ and $\lambda > 0$ cases. 

Next, we insert \cref{F1_exp} into the last term in \eqref{integratedFPN} and use that all particles are identical, which allows us to relabel particles for $i = 3, \dots, N$ and write
\begin{multline*}
	\int_{\Sigma^{N-1}} \partial_{\theta_1} \left[ F_1(\vec \xi) f^N \right] \dd\xi_{2}\dots \dd\xi_N\\
	= \frac{\gamma (N-1)}{N} \partial_{\theta_1}  \int_\Sigma \bf{n}(\theta_1)\cdot
\nabla K_{\alpha,D}  (\xx_1 + \lambda\bf{e}(\theta_1) -\xx_2 )f_2^N(t, \xi_1, \xi_2) \dd \xi_2,
\end{multline*}
where $f_2^N$ is the two-particle marginal given by \cref{marginal}. Next we assume that, as $N\to \infty$, there is propagation of chaos, namely, 
$f^N_2\rightharpoonup (f_1^\infty)^{\otimes 2}$ as $N\to\infty$ holds for some limiting function $f_1^\infty$ \cite[Definition 3.ii]{jabin2017mean}. Inserting this approximation in \eqref{integratedFPN} and writing $f_1^\infty \equiv f$ we arrive at
\begin{equation}\label{eq:spefPDE}
    \partial_t f=\nabla_\xx\cdot[D_T\nabla_\xx f-v_0\bf{e}_\theta f]+
    \partial_\theta[D_R\partial_\theta f-\gamma(\bf{n}_\theta\cdot\nabla c_\lambda)f],
\end{equation}
where $c_\lambda(t,\xx, \theta) = c(t, \xx+\lambda {\bf e}(\theta))$ and 
\begin{align}\label{def_rho}
\begin{aligned}
c(t,\xx) &= \int_\Sigma 	\nabla K_{\alpha,D}  (\xx -\xx_2 )f(t, \xi_2) \dd \xi_2 = \int_{\T^2_L} 	\nabla K_{\alpha,D}  (\xx  -\xx_2 )\rho(t, \xx_2) \dd \xx_2.
\end{aligned}
\end{align}

To obtain the propagation of chaos $f_2^N\rightharpoonup(f_1^\infty)^{\otimes 2}$ as $N\to\infty$ rigorously for this model one may consider the results for mean-field limits for 
Vicsek models (see for example \cite{bolley2012mean,briant2022cauchy} with global Lipschitz alignment kernel), Vlasov--Fokker--Planck models with singular interaction kernel \cite{bresch2022new,bresch2019mean}, or the Keller--Segel model \cite{10.1214/16-AAP1267}.
For a general review of the propagation of chaos, see also \cite{chaintron2021propagation}. 

\section{Stationary solutions}\label{sec:FEM}
In this section, we consider the $y$-independent stationary solutions of the macroscopic model \eqref{eq:fcresc}, that is, $f(x,\theta)$ and $c(x)$ satisfying
\begin{subequations} \label{stat_2d}
    \begin{align}
    0&=\partial_x[D_T\partial_x f-\Pe\cos(\theta)f]+\partial_\theta[\partial_\theta f+\gamma\sin(\theta)\partial_x c(x+\lambda\cos(\theta))f],\\
    0&= \partial_x^2 c-\alpha c+\rho,
\end{align}
\end{subequations}
subject to periodic boundary conditions and mass and positivity constraints
$$\int f\dd x\dd\theta=1, \qquad f\geq 0.
$$
We use the following algorithm to approximate solutions to \eqref{stat_2d}
\begin{algorithmic}
    \STATE{$\text{res}_{\text{tot}} \gets 10^{10}$}
    \STATE{$c\gets c_0(x)=1-\frac{1}{2\pi}\cos(2\pi x)$}
    \WHILE{$\text{res}_{\text{tot}}>\text{tol}$}
    \STATE{$f\gets\text{FEM}_f(c)$}
    \STATE{$c\gets\text{FEM}_c(f)$}
    \STATE{$\text{res}_{\text{tot}}\gets\text{res}_{\text{tot}}(f,c)$}
    \ENDWHILE.
\end{algorithmic}
Here the functions $\text{FEM}_f,\text{FEM}_c$ are the finite element approximations of the equations for $f$ and $c$, respectively, and $\text{res}_\text{tot}$ is the maximum of the absolute values of the test residuals for all test functions used in the finite element method. We use the finite element method package \texttt{Fenics}.

Solutions of \eqref{stat_2d} for $\lambda=0$ and $\lambda=0.1$ using this algorithm are shown in \cref{fig:femlanes}. When $\lambda=0$, the solution is a ``one-dimensional spot'' at $x=0$, with particles oriented either left ($\theta = \pi$) or right ($\theta = 0$). Increasing the look-ahead parameter to $\lambda=0.1$ results in the shifting of the density to two peaks around $\theta = \pm \pi/2$, meaning that ants are mainly moving up and down a lane at $x = 0.5$. The tilting of the peaks is consistent with that of the solution obtained via time-dependent simulations (\cref{fig:fytheta_plot}) and the eigenfunction associated with the most unstable mode (\cref{fig:eigf_effect_lambda}).

\begin{figure}[htb]
    \centering
   \subfloat[$\lambda=0$]{\includegraphics[width=0.48\textwidth]{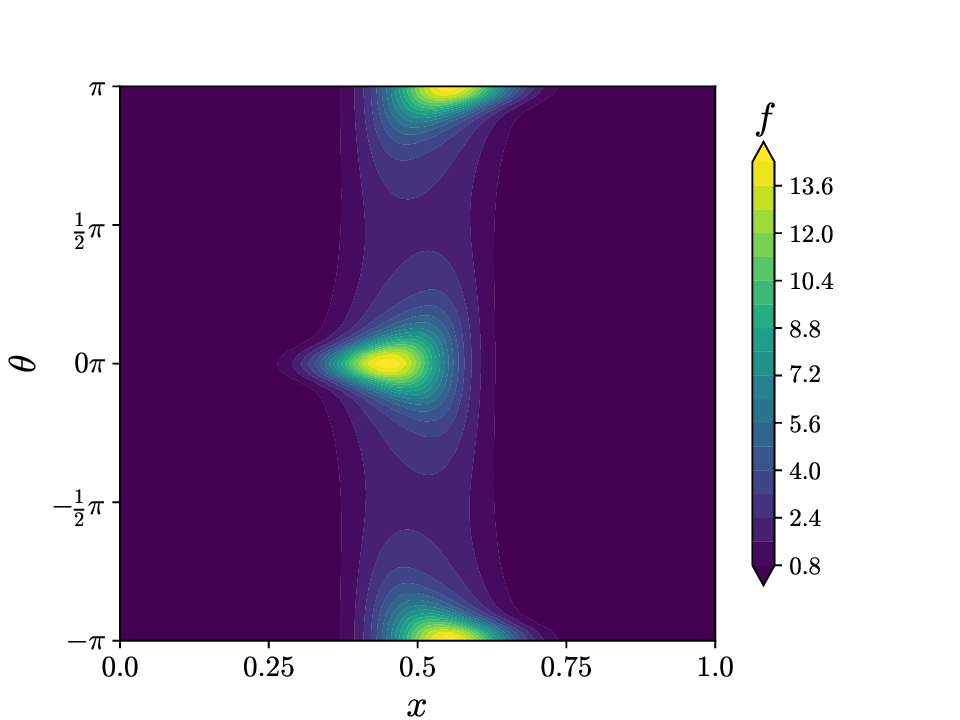}}
   \subfloat[$\lambda=0.1$]{\includegraphics[width=0.48\textwidth]{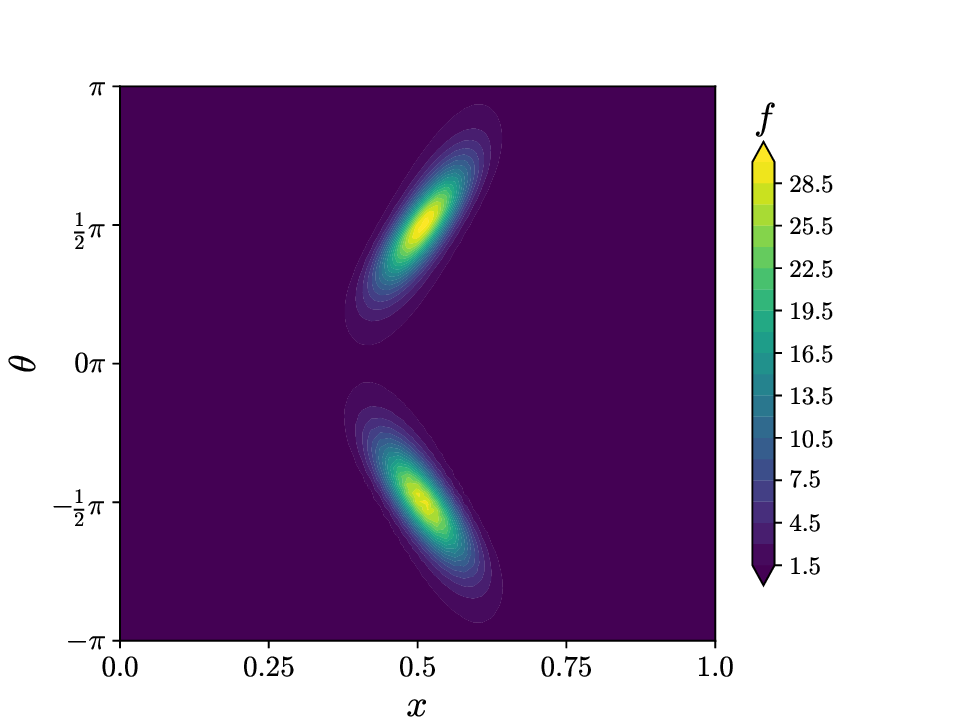}}
    \caption{Solutions $f(x,\theta)$ of \cref{stat_2d} for $\lambda = 0$ (a) and $\lambda=0.1$ (b) obtained via the finite-element method. Other parameters used: $\text{res}_{\text{tot}}=2.9\cdot 10^{-12}, D_T=0.1,\Pe=5.0,\gamma=50,\alpha=1$.}
    \label{fig:femlanes}
\end{figure}

\section{Sweep of spatial density plots with varying initial conditions, $\Pe$ and $\gamma$} \label{sec:summary_plots}

\cref{fig:rho706,fig:rho1001,fig:rho4472,fig:rho5555,fig:rho6061,fig:rho8154,fig:rho9437,fig:rho9956} show the solution $\rho(t,\xx)$ of \cref{eq:fcresc} at time $t=5.0$ for a range of parameter pairs $(\gamma, \Pe)$ and eight different initial conditions, obtained by sampling from the uniform distribution and then normalising, using different random seeds. This data is then used to generate \cref{fig:l2p2}.
\begin{figure}[htb]
\centering
\includegraphics[height=0.45\textheight]{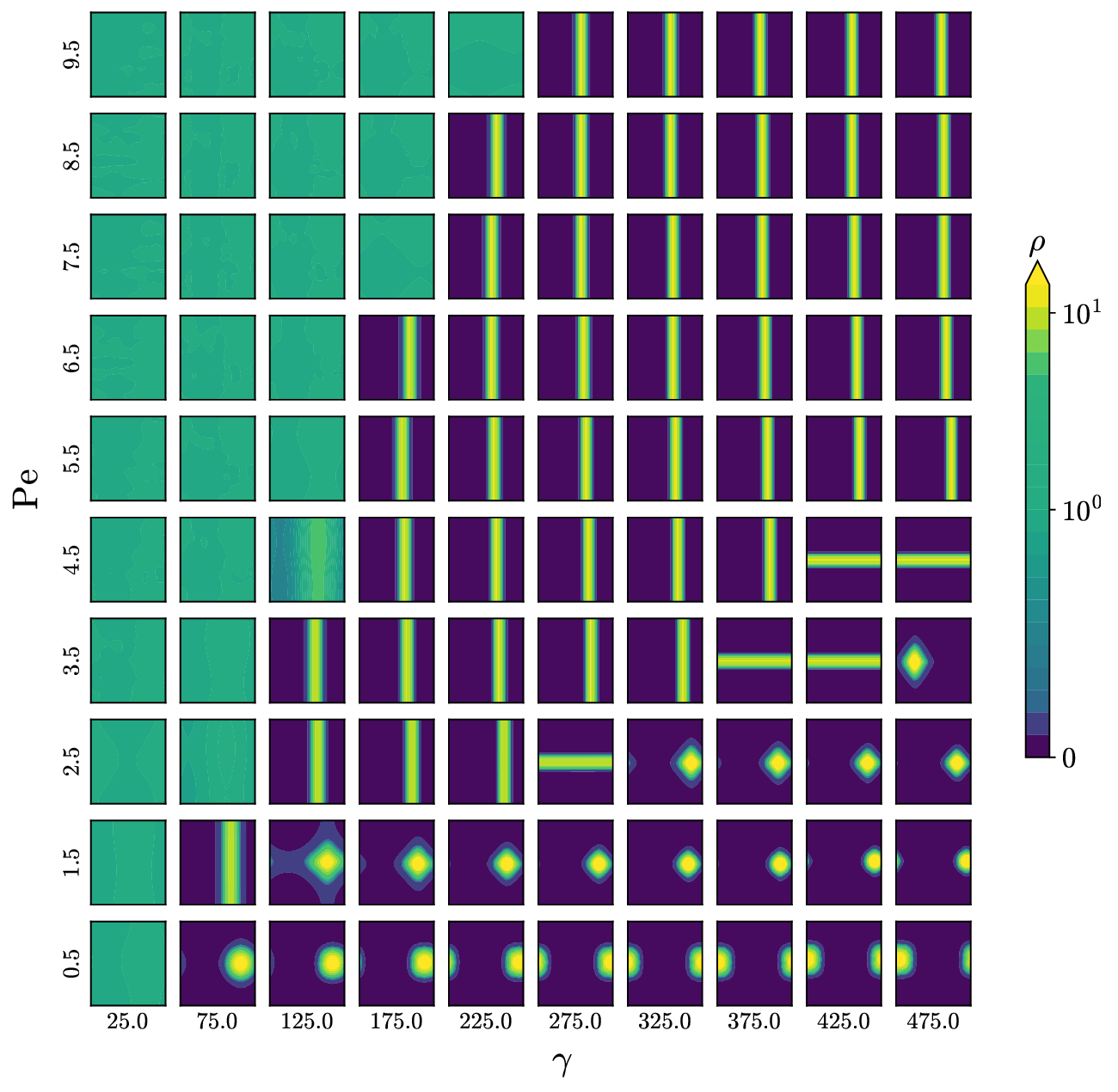}
\caption{Solution $\rho(t=5.0,\xx)$ of \cref{eq:fcresc} for a range of parameter values $\gamma$ and $\Pe$ and random seed 706. Other parameters are $D_T=0.01,\lambda=0.1,\alpha=1,N_x=N_y=31,N_\theta=21,\Delta t=10^{-5}$.}
\label{fig:rho706}
\end{figure}
\begin{figure}[htb]
\centering
\includegraphics[height=0.45\textheight]{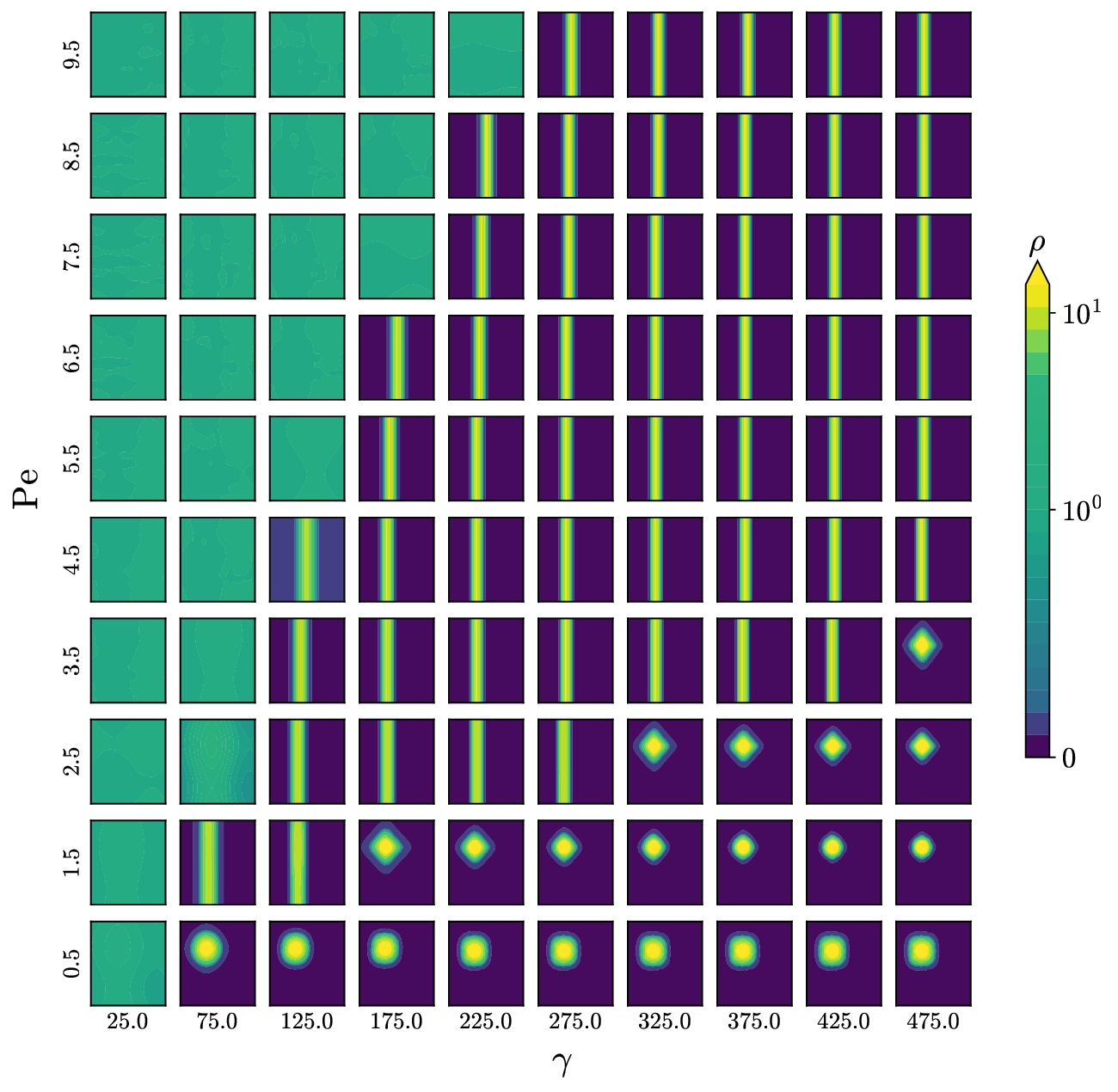}
\caption{Solution $\rho(t=5.0,\xx)$ of \cref{eq:fcresc} for a range of parameter values $\gamma$ and $\Pe$ and random seed 1001. Other parameters are $D_T=0.01,\lambda=0.1,\alpha=1,N_x=N_y=31,N_\theta=21,\Delta t=10^{-5}$.}
\label{fig:rho1001}
\end{figure}
\begin{figure}[htb]
\centering
\includegraphics[height=0.45\textheight]{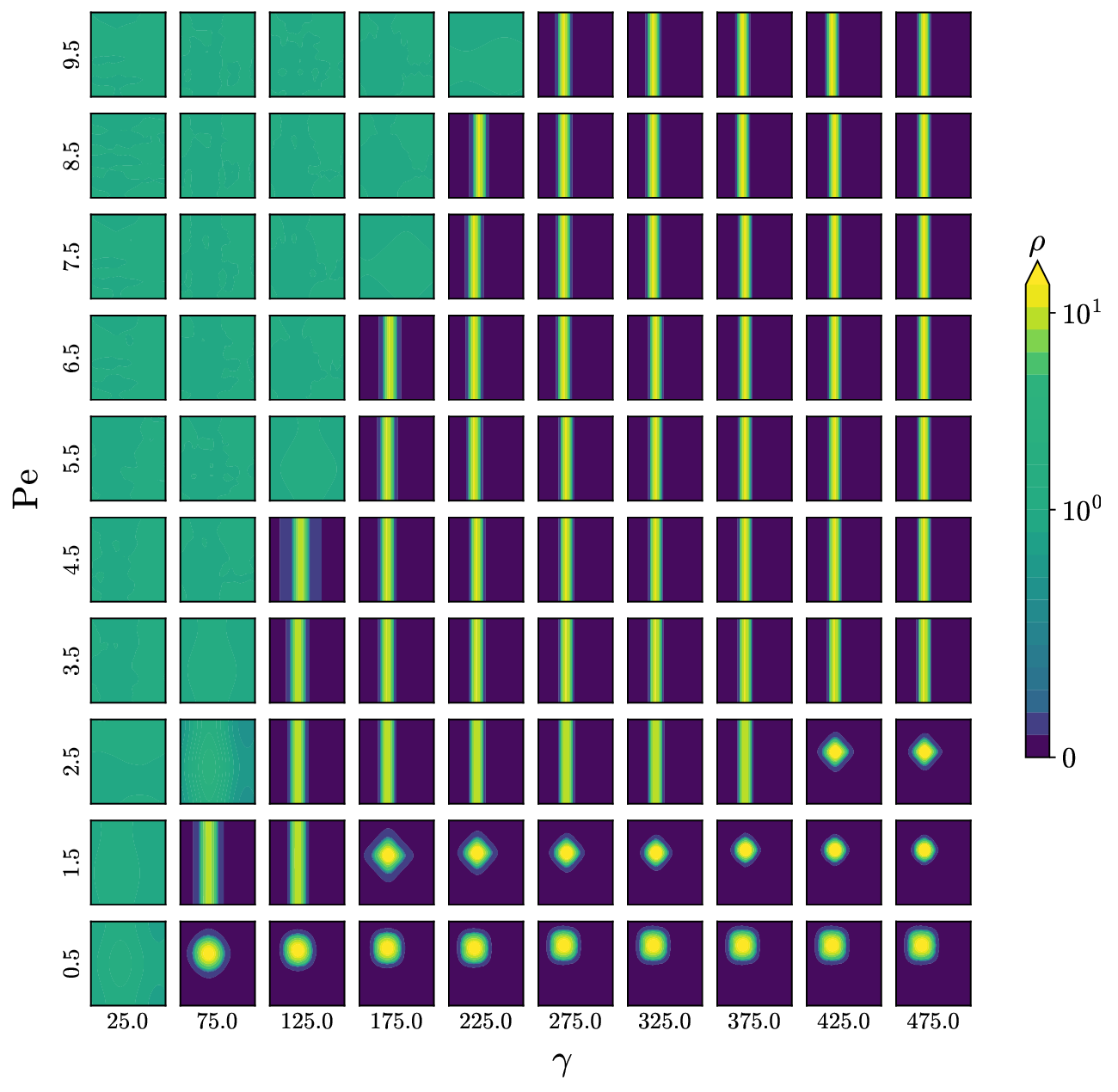}
\caption{Solution $\rho(t=5.0,\xx)$ of \cref{eq:fcresc} for a range of parameter values $\gamma$ and $\Pe$ and random seed 4472. Other parameters are $D_T=0.01,\lambda=0.1,\alpha=1,N_x=N_y=31,N_\theta=21,\Delta t=10^{-5}$.}
\label{fig:rho4472}
\end{figure}
\begin{figure}[htb]
\centering
\includegraphics[height=0.45\textheight]{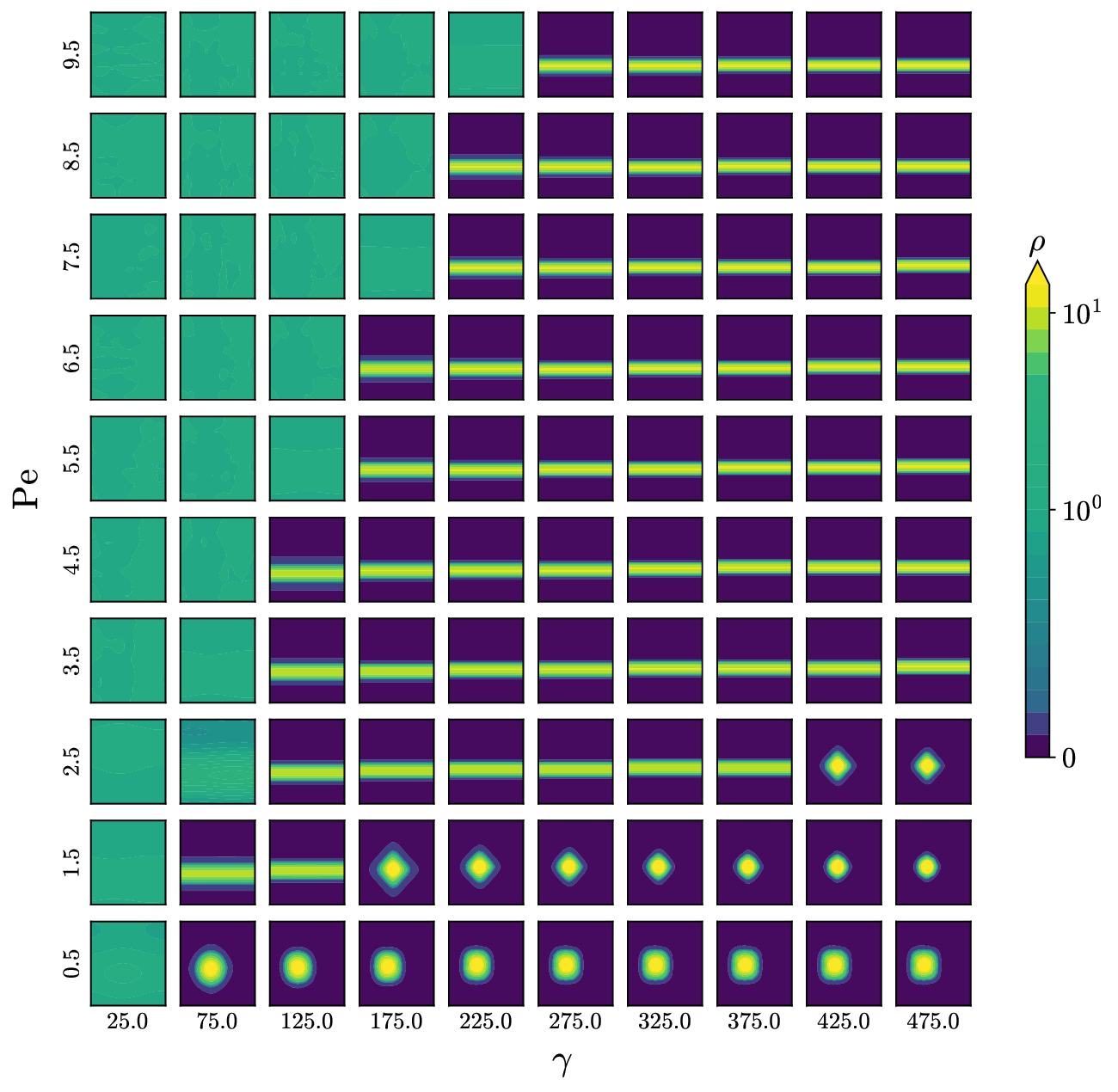}
\caption{Solution $\rho(t=5.0,\xx)$ of \cref{eq:fcresc} for a range of parameter values $\gamma$ and $\Pe$ and random seed 5555. Other parameters are $D_T=0.01,\lambda=0.1,\alpha=1,N_x=N_y=31,N_\theta=21,\Delta t=10^{-5}$.}
\label{fig:rho5555}
\end{figure}
\begin{figure}[htb]
\centering
\includegraphics[height=0.45\textheight]{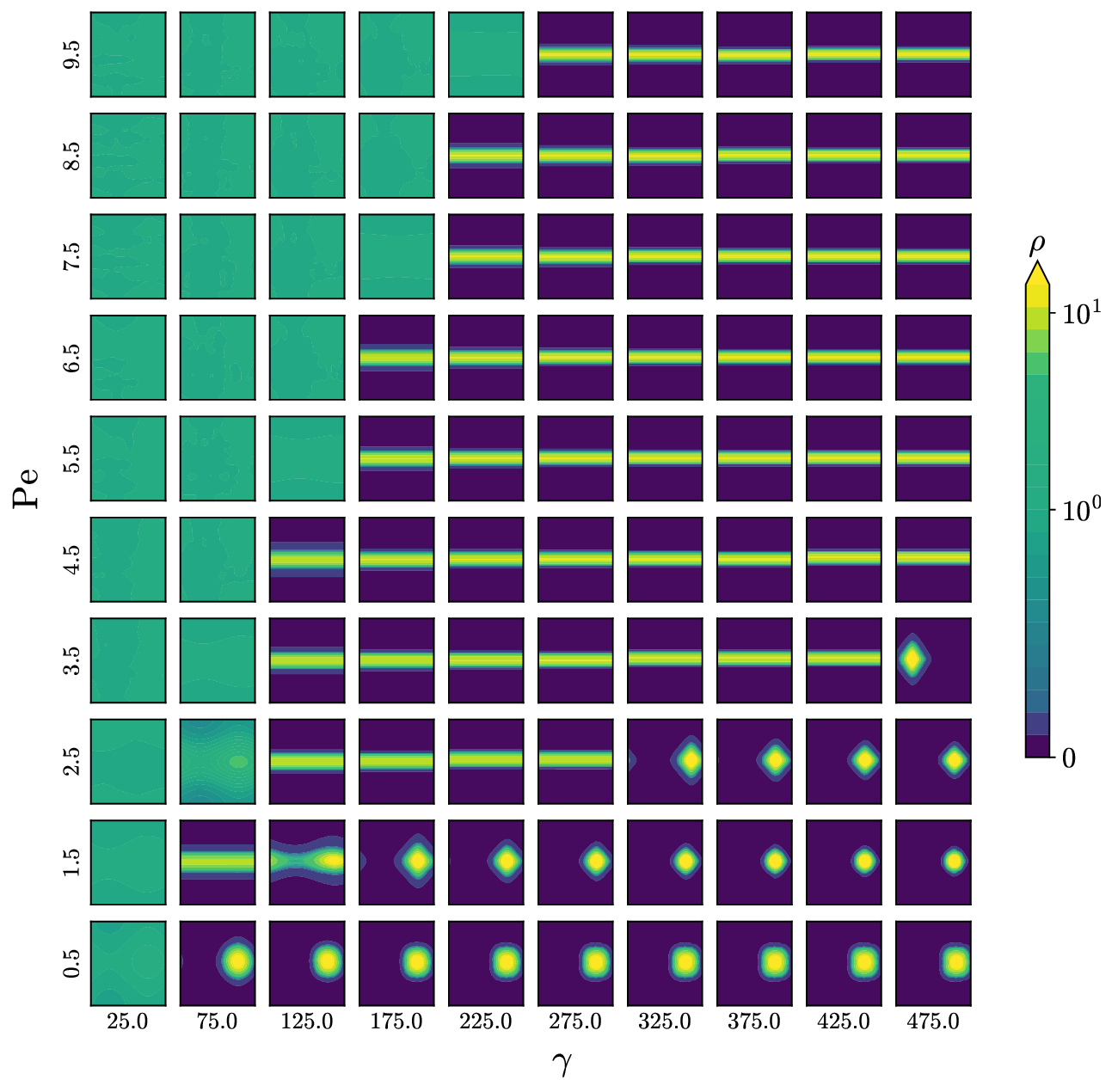}
\caption{Solution $\rho(t=5.0,\xx)$ of \cref{eq:fcresc} for a range of parameter values $\gamma$ and $\Pe$ and random seed 6061. Other parameters are $D_T=0.01,\lambda=0.1,\alpha=1,N_x=N_y=31,N_\theta=21,\Delta t=10^{-5}$.}
\label{fig:rho6061}
\end{figure}
\begin{figure}[htb]
\centering
\includegraphics[height=0.45\textheight]{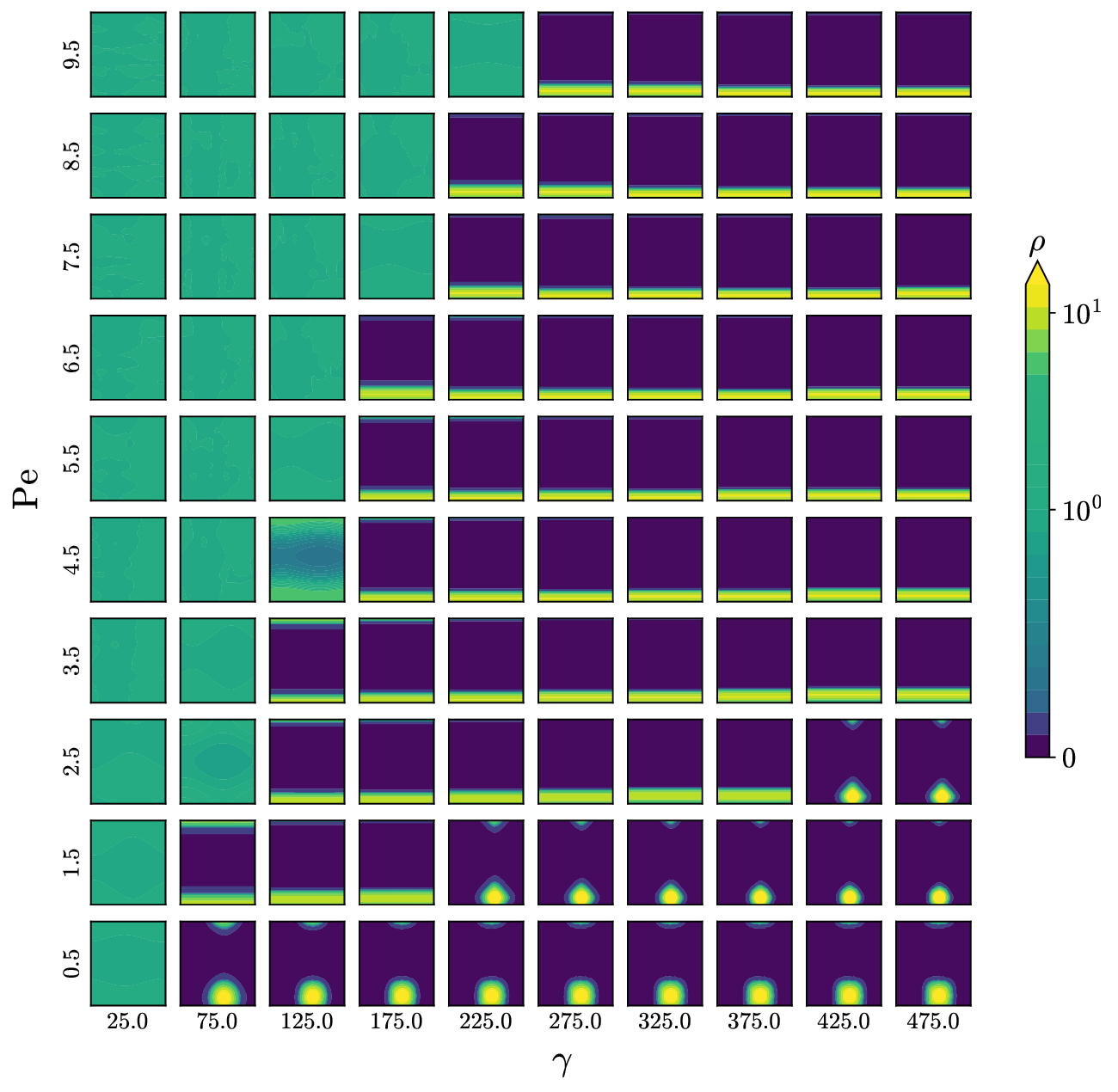}
\caption{Solution $\rho(t=5.0,\xx)$ of \cref{eq:fcresc} for a range of parameter values $\gamma$ and $\Pe$ and random seed 8154. Other parameters are $D_T=0.01,\lambda=0.1,\alpha=1,N_x=N_y=31,N_\theta=21,\Delta t=10^{-5}$.}
\label{fig:rho8154}
\end{figure}
\begin{figure}[htb]
\centering
\includegraphics[height=0.45\textheight]{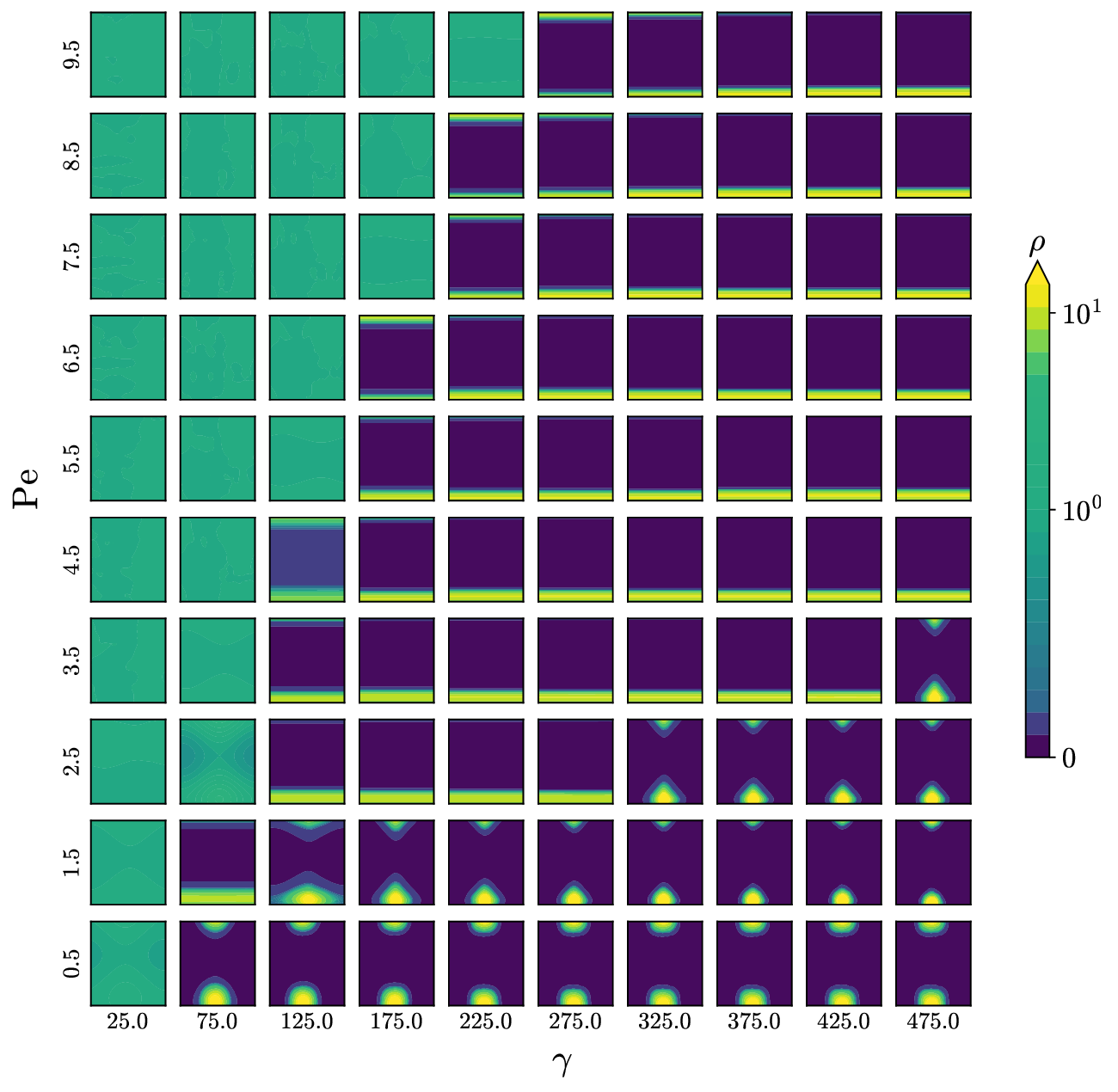}
\caption{Solution $\rho(t=5.0,\xx)$ of \cref{eq:fcresc} for a range of parameter values $\gamma$ and $\Pe$ and random seed 9437. Other parameters are $D_T=0.01,\lambda=0.1,\alpha=1,N_x=N_y=31,N_\theta=21,\Delta t=10^{-5}$.}
\label{fig:rho9437}
\end{figure}
\begin{figure}[htb]
\centering
\includegraphics[height=0.45\textheight]{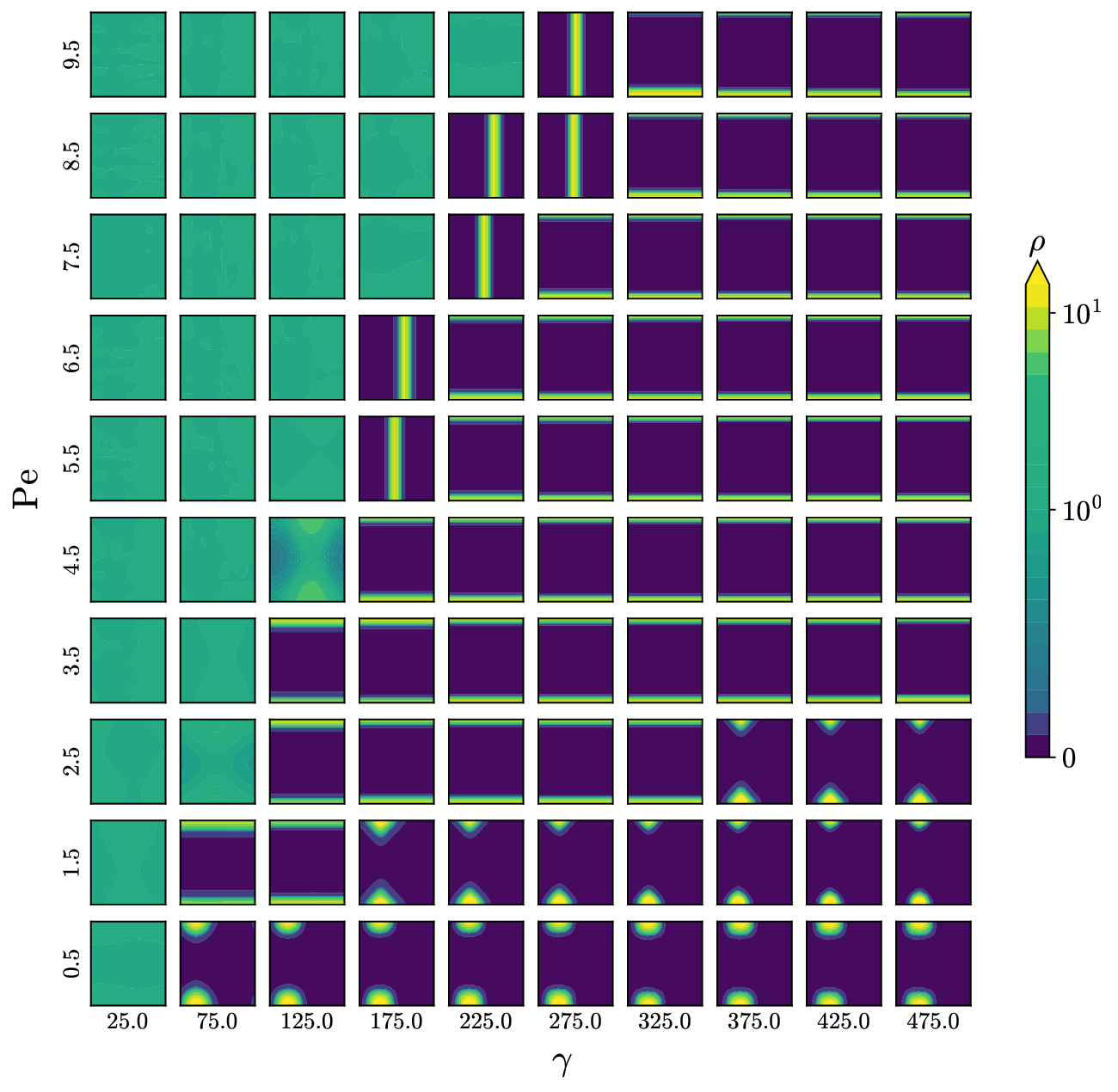}
\caption{Solution $\rho(t=5.0,\xx)$ of \cref{eq:fcresc} for a range of parameter values $\gamma$ and $\Pe$ and random seed 9956. Other parameters are $D_T=0.01,\lambda=0.1,\alpha=1,N_x=N_y=31,N_\theta=21,\Delta t=10^{-5}$.}
\label{fig:rho9956}
\end{figure}

\bibliographystyle{siamplain}
\bibliography{references}
\end{document}